\theoremstyle{plain}
\newtheorem{thm}{Theorem}[section]
\newtheorem{cor}[thm]{Corollary}
\newtheorem{lmm}[thm]{Lemma}
\newtheorem{prp}[thm]{Proposition}
\newcommand{\Mod}[1]{\mathrm{Mod}({#1})}
\newcommand{\Aut}[1]{\mathrm{Aut}({#1})}
\newcommand{\Out}[1]{\mathrm{Out}({#1})}
\newcommand{\C}[1]{\mathcal{C}({#1})}
\newcommand{\V}[1]{\mathcal{C}_{0}({#1})}
\newcommand{\G}[1]{\mathcal{C}_{1}({#1})}
\newcommand{\A}[1]{\mathcal{A}({#1})}
\newcommand{\Ov}[1]{\mathcal{O}({#1})}
\newcommand{\diam}[1]{\mathrm{diam}({#1})}
\newcommand{\grp}[1]{\langle{#1}\rangle}
\newcommand{\pdiam}[2]{\mathrm{diam}_{{#1}}({#2})}
\newcommand{\Dis}[1]{\mathrm{d}({#1})}
\newcommand{\dis}[2]{\mathrm{d}_{{#1}}({#2})}
\newcommand{\N}{\mathbb{N}}
\newcommand{\Z}{\mathbb{Z}}
\newcommand{\R}{\mathbb{R}}
\newcommand{\nada}{\emptyset}
\newcommand{\del}{\partial}
\newcommand{\sez}{\Longrightarrow}
\newcommand{\QED}{\hfill$\Box$\medskip}
\newcommand{\onto}{\twoheadrightarrow}
\newcommand{\into}{\hookrightarrow}
\newcommand{\frgt}{\backslash}
\newcommand{\tilda}[1]{\widetilde{#1}}
\title[Short-word pseudo-Anosovs]{A recipe for short-word pseudo-Anosovs}
\author{Johanna Mangahas}
\address{University of Michigan}
\email{mangahas@umich.edu}
\thanks{The author is partially supported by NSF RTG grant \#0602191}
\begin{document}
\bibliographystyle{amsalpha}
\begin{abstract}Given any generating set of any pseudo-Anosov-containing subgroup of the mapping class group of a surface, we construct a pseudo-Anosov with word length bounded by a constant depending only on the surface.  More generally, in any subgroup $G$ we find an element $f$ with the property that the minimal subsurface supporting a power of $f$ is as large as possible for elements of $G$; the same constant bounds the word length of $f$.  Along the way we find new examples of convex cocompact free subgroups of the mapping class group.
\end{abstract}
\maketitle
\section{Introduction}

Consider $\Mod{S}$, the mapping class group of a surface $S$, and its action on the isotopy classes of simple closed curves on $S$.  If all elements of a subgroup fix no common family of curves, then the group contains a \emph{pseudo-Anosov}, that is, a single element which itself fixes no finite family of curves \cite{Iv2}.  This paper answers in the affirmative Fujiwara's question of whether one can always find a ``short-word'' pseudo-Anosov (Question 3.4 in \cite{Fu1}).  Where $\Sigma$ generates the group $G$, let \emph{$\Sigma$-length} denote the length of an element of $G$ in the word metric induced by $\Sigma$.  The affirmative statement is:

\begin{thm}\label{mpa}There exists a constant $K = K(S)$ with the following property.  Suppose $G < \Mod{S}$ is finitely generated by $\Sigma$ and contains a pseudo-Anosov.  Then $G$ contains a pseudo-Anosov with $\Sigma$-length less than $K$.\end{thm}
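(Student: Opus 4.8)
The plan is to transfer the whole question to the curve complex $\C{S}$. By Masur--Minsky, $\C{S}$ is Gromov hyperbolic with a hyperbolicity constant $\delta = \delta(S)$ depending only on $S$, and a mapping class is pseudo-Anosov exactly when it acts on $\C{S}$ as a loxodromic isometry. So Theorem~\ref{mpa} is equivalent to producing, for every $(G,\Sigma)$ as in the statement, a word of uniformly bounded $\Sigma$-length that acts loxodromically on $\C{S}$. The three ingredients I would rely on, each with constants depending only on $S$, are: (i) the hyperbolicity constant $\delta(S)$; (ii) a uniform positive lower bound $L = L(S)$ on the stable translation length of every pseudo-Anosov on $\C{S}$; and (iii) acylindricity of the $\Mod{S}$-action on $\C{S}$, with constants depending only on $S$. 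Ingredients (ii) and (iii) are precisely what make a single constant $K$ possible across all surfaces and subgroups.

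Fix a basepoint $v_0 \in \V{S}$ and record, for each word $w$, its displacement $\mathrm{d}(v_0, w v_0)$. First I would dispose of the \emph{elementary} case, in which $G$ fixes the pair of endpoints $\{\xi^+,\xi^-\} \subset \partial\C{S}$ of one of its pseudo-Anosovs (by acylindricity, the only alternative to fixing such a pair is the presence of two independent loxodromics, so the dichotomy is clean). Acylindricity forces the pointwise stabiliser of $\{\xi^+,\xi^-\}$ to be finite of order bounded by some $m(S)$, whence $G$ is virtually cyclic with cyclic part generated by a pseudo-Anosov; a short computation (the infinite dihedral model is the worst case) then shows that a product of at most $m(S)+1$ generators already has infinite order and is therefore loxodromic. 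This settles the elementary case with a bound depending only on $S$.

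The heart of the argument is the \emph{non-elementary} case, where I would prove a quantitative ping-pong lemma: there are constants $P = P(S)$ and a separation threshold $D = D(S)$ such that whenever $f,g$ are pseudo-Anosovs whose endpoint pairs in $\partial\C{S}$ are $D$-separated, the subgroup $\grp{f^{P},g^{P}}$ is free of rank two, its orbit map into $\C{S}$ is a quasi-isometric embedding, and every nontrivial element is pseudo-Anosov; this simultaneously supplies the convex cocompact free subgroups advertised in the abstract. The uniformity of $P$ and $D$ comes from combining (ii) --- which guarantees $f^{P}, g^{P}$ translate by at least $L \cdot P$ --- with $\delta$-hyperbolicity and (iii), the latter bounding the overlap of the two axes so that the local ping-pong estimates globalise with constants read off only from $\delta(S)$, $L(S)$, and the acylindricity data.

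The main obstacle --- and the step I expect to carry the real content --- is producing two such suitably separated pseudo-Anosovs as \emph{short} words in $\Sigma$. My approach would be a uniform confinement/pigeonhole argument on the ball $B_R = \{w : |w|_\Sigma \le R\}$: I would show that if no word of length $\le R$ acted loxodromically with translation length at least a definite multiple of $\delta$ and with endpoints $D$-separated from those of a second short loxodromic, then the finite orbit $B_R \cdot v_0$ would be trapped in a bounded neighbourhood of a single geodesic in $\C{S}$, forcing $G$ to fix a point or a pair of points of $\partial\C{S}$ and contradicting non-elementarity. The delicate point is to run this dichotomy with $R$ depending only on $S$: the confinement must be detected at a scale and with separation thresholds governed solely by $\delta(S)$, $L(S)$, and the acylindricity constants, using $\delta$-thinness to promote ``short words do not spread'' into ``the limit set is degenerate.'' Once two separated short loxodromics are in hand, the ping-pong lemma upgrades them to a short pseudo-Anosov (for instance a bounded power, or the bounded word $f^{P} g^{P}$), completing the proof with $K = K(S)$.
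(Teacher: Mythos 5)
Your outline correctly identifies that the whole game is the step you flag as ``the main obstacle,'' but the confinement/pigeonhole argument you propose there does not close. The difficulty is that non-elementarity of $G$ is not detectable at any uniform scale from the action on $\C{S}$ alone. First, the generators may all be reducible, hence elliptic on $\C{S}$, and the joint displacement $\min_x \max_{s\in\Sigma} \Dis{x,sx}$ of a non-elementary generating set has no positive lower bound in terms of $S$ beyond a few multiples of $\delta$ (two Dehn twists about filling curves already generate a pseudo-Anosov-containing group while moving a well-chosen basepoint a distance comparable to $\delta(S)$); so the standard quantitative loxodromic-production results in hyperbolic spaces (Koubi, Delzant, Breuillard--Fujiwara), which yield a short loxodromic only when the displacement beats a definite multiple of $\delta$, give nothing uniform here. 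Second, and more fundamentally, $B_R\cdot v_0$ is a finite set, so it is always ``trapped'' in a bounded neighbourhood of a geodesic; its confinement places no constraint on the limit set of the infinite group $G$, because elements of $\Sigma$-length greater than $R$ are not controlled by the behaviour of those of length at most $R$ unless one invokes structure beyond the coarse action on $\C{S}$. The contradiction you hope to reach (``$G$ fixes a point or a pair of points of $\partial\C{S}$'') therefore does not follow, and I do not see how to repair it using only $\delta$-hyperbolicity, the translation-length bound, and acylindricity.

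The paper supplies exactly the missing structure by descending below the level of $\C{S}$: for a pure reducible element, non-elementarity manifests uniformly as translation by at least $c(S)|n|$ in the curve complexes of its domains (Corollary~\ref{strans}), and Behrstock's inequality together with the Bounded Geodesic Image Theorem convert a bounded-length alternating word in two ``sufficiently different'' reducibles into definite progress in $\C{S}$ itself (Lemma~\ref{geo} and Proposition~\ref{zigzag}), with a conjugation trick (Proposition~\ref{conj}) manufacturing such a pair from arbitrary reducible generators. The reduction to two generators is also nontrivial and uses Ivanov's canonical reduction theory: one passes to the pure finite-index subgroup $\grp{\Sigma}\cap\Gamma(S)$ with Shalen--Wagreich controlling the new word lengths, then climbs a chain of at most $2\xi(S)$ strictly nested active subsurfaces. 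Your elementary-case analysis and your uniform ping-pong for well-separated loxodromics are fine --- the latter is essentially Fujiwara's theorem, quoted in the paper as Theorem~\ref{fuji} --- but those are the easy parts; the step carrying the real content is the one your sketch leaves unproved.
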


The proof provides an explicit construction of pseudo-Anosovs from arbitrary non-pseudo-Anosov elements.  In fact, it addresses a broader question.  Roughly speaking, a pseudo-Anosov requires the whole surface for its support.  The other elements are called reducible, because they allow one to ``reduce'' the surface in question.  That is, reducible mapping classes have powers which fix proper subsurfaces and act trivially or as a pseudo-Anosov on those subsurfaces.  Mosher has described a unifying approach to both types \cite{Mo}, associating to a mapping class $f$ what he calls its active subsurface $\A{f}$.  For the sake of introduction one may think of $\A{f}$ as the smallest subsurface supporting some power of $f$ (noting that these subsurfaces and thus their inclusion are defined up to isotopy) and observe that $f$ is pseudo-Anosov exactly when $\A{f} = S$.  Then several foundational mapping class group theorems, including the Tits alternative for $\Mod{S}$ \cite{Iv2, Mc}, and subgroup structure results from \cite{BLM} and \cite{Iv1}, elegantly derive from what Mosher coined the Omnibus Subgroup Theorem\footnote{We should clarify that Mosher formulated the Omnibus Theorem to yield as corollaries certain results we need to prove Theorem~\ref{srpa}.  Mosher wanted streamlined proofs for transposing to the key of $\Out{F_n}$ (see \cite{HM} for progress).  Content to use the old results as-is, we define active subsurfaces so that the Omnibus Theorem rephrases a known theorem (see Section \ref{pure}); meanwhile we reap the benefits of its perspective and terminology.}: given a group $G < \Mod{S}$ there exists $f \in G$ such that for all $g \in G$, $\A{g} \subset \A{f}$.  Call such an $f$ \emph{full-support for} $G$.  In this paper we actually prove the following, which includes Theorem~\ref{mpa} as a special case.

\medskip
\noindent \textbf{Theorem~\ref{srpa}.} (Main Theorem) \emph{There exists a constant $K = K(S)$ such that, for any finite subset $\Sigma \subset \Mod{S}$, one may find $f$ full-support for $\grp{\Sigma}$ with $\Sigma$-length less than $K$.}\medskip

The proof of Theorem~\ref{srpa} spells out short pseudo-Anosovs explicitly, with the following core construction, concerning special pairs of pure reducible mapping classes we will call \emph{sufficiently different}.  These are pure mapping classes $a$ and $b$ with pseudo-Anosov restrictions to proper subsurfaces $A$ and $B$ respectively, such that $A$ and $B$ together fill $S$, meaning that each curve on $S$ has essential intersection with either $A$ or $B$.  The proposition also identifies subgroups whose action on the curve complex gives a quasi-isometric embedding, so that they are convex cocompact \cite{H, KL2} in the sense defined by Farb and Mosher \cite{FM} in analogy to Kleinian groups.  This last part is proven for interest, and is not necessary for the main theorem.

\begin{prp}\label{coreconstruction}There exists a constant $Q = Q(S)$  with the following property.  Suppose $a$ and $b$ are sufficiently different pure reducible mapping classes.  Then for any $n,m \geq Q$, every nontrivial element of $G = \grp{a^n, b^m}$ is pseudo-Anosov except those conjugate to powers of $a^n$ or $b^m$.  Furthermore, $G$ is a rank two free group, and all of its finitely generated all-pseudo-Anosov subgroups are convex cocompact.\end{prp}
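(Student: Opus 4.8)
The plan is to run a ping-pong argument for the action of $G$ on the curve complex $\C{S}$, using subsurface projections to $A$ and $B$ as the bookkeeping device. Write $\alpha = a^n$ and $\beta = b^m$. Since $A$ and $B$ are proper and fill $S$, neither is nested in the other, so they overlap and Behrstock's inequality applies: there is $B_0 = B_0(S)$ with $\min\{\dis{A}{\del B, c}, \dis{B}{\del A, c}\} \le B_0$ for every curve $c$ meeting both. Filling also forces $\del B$ to meet $A$ and $\del A$ to meet $B$, so the relevant projections are defined. Because $a$ is supported on $A$ with pseudo-Anosov restriction, it acts on $\C{A}$ as a loxodromic isometry of positive translation length $\tau_A$, and projection is $a$-equivariant; hence $\dis{A}{\del B, \alpha^j \del B} \ge |nj|\tau_A - C_0$. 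I would choose $Q = Q(S)$ so that for $|n|,|m| \ge Q$ these projection distances, for every nonzero power, exceed a threshold $\theta = \theta(S)$ taken larger than $B_0$, than the universal disjointness constant for subsurface projections, and than the constant in the Bounded Geodesic Image Theorem.

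Next I would set up the ping-pong sets $Z_a = \{c : c \text{ meets } A,\ \dis{A}{\del B, c} > \theta\}$ and $Z_b$ symmetrically. Behrstock's inequality makes $Z_a \cap Z_b = \nada$. A curve disjoint from $A$ is disjoint from $\del A$, so projects within the universal disjointness constant of $\del A$ in $\C{B}$; since $\theta$ exceeds that constant, every curve in $Z_b$ in fact meets $A$, and symmetrically. This lets the orbit argument run: for $c \in Z_b$, Behrstock gives $\dis{A}{\del B, c} \le B_0$, so the translation estimate yields $\dis{A}{\del B, \alpha^j c} \ge |nj|\tau_A - C_0 - B_0 > \theta$, i.e. $\alpha^j(Z_b) \subseteq Z_a$ for all $j \ne 0$, and symmetrically $\beta^i(Z_a) \subseteq Z_b$. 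The standard ping-pong lemma then shows $G \cong F_2$, establishing the freeness claim, and shows that a cyclically reduced word using both letters permutes $Z_a$ and $Z_b$ without collapsing them.

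The classification and the pseudo-Anosov conclusion both reduce to a single estimate: for $g \in G$ of syllable length $\ell$ and a fixed basepoint $c_0$, one has $\Dis{c_0, g c_0} \asymp \ell$ in $\C{S}$. The upper bound is immediate, since each syllable $\alpha^j$ (resp. $\beta^i$) fixes $\del A$ (resp. $\del B$) and so moves $c_0$ a bounded amount in $\C{S}$. The lower bound is the crux. Along the word the translates $g_s A, g_s B$ over prefixes $g_s$ each carry a projection exceeding $\theta$; Behrstock's inequality supplies a consistent linear order on these domains with no backtracking, and the Bounded Geodesic Image Theorem then forces every $\C{S}$-geodesic from $c_0$ to $g c_0$ to pass within distance one of each $\del(g_s A)$ and $\del(g_s B)$, in that order, producing $\gtrsim \ell$ far-apart points along the geodesic; this gives $\Dis{c_0, g c_0} \ge \lambda \ell - C$. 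Consequently a cyclically reduced word using both letters has unbounded $\C{S}$-orbit and is pseudo-Anosov, whereas any power of a single generator fixes a boundary curve and is reducible; since every element is conjugate to a cyclically reduced word, the non-pseudo-Anosov elements are exactly the conjugates of powers of $\alpha$ and $\beta$. I expect this lower bound to be the main obstacle: it is precisely the step that promotes large projections to genuine curve-complex (top-level) distance rather than marking distance, and it is where the filling hypothesis is essential, since filling guarantees that no stretch of the geodesic escapes detection by the projections to the $g_s A$ and $g_s B$.

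Finally, for convex cocompactness I would read the estimate $\Dis{c_0, g c_0} \asymp \ell$ as saying that the orbit map is a quasi-isometric embedding of $G$, equipped with the syllable metric, into $\C{S}$; and the syllable metric is quasi-isometric to the Bass--Serre tree $T$ of the splitting $F_2 = \Z * \Z$ (all nonzero powers of a single generator have syllable length one, so each vertex group $\grp{\alpha}$, $\grp{\beta}$ has bounded syllable diameter). An element of $G$ is elliptic on $T$ exactly when it is conjugate into $\grp{\alpha}$ or $\grp{\beta}$, that is, exactly when it is reducible; so a finitely generated all-pseudo-Anosov subgroup $H$ acts freely on $T$, hence freely and cocompactly on its minimal subtree, and its orbit map $H \to T$ is a quasi-isometric embedding by Milnor--\v{S}varc. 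Composing with the quasi-isometric embedding of $T$ into $\C{S}$ furnished above shows the orbit map $H \to \C{S}$ is a quasi-isometric embedding, whence $H$ is convex cocompact by the characterization of convex cocompactness through quasi-isometric orbit maps in $\C{S}$ \cite{H, KL2}.
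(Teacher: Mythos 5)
Your proposal is correct in outline, and its two halves relate to the paper quite differently. For freeness and the classification of non-pseudo-Anosov elements you are essentially reconstructing the paper's Proposition~\ref{zigzag}: the paper likewise chooses $Q$ so that Corollary~\ref{strans} makes $\dis{A}{\del B,a^{nk}(\del B)}$ and $\dis{B}{\del A,b^{mk}(\del A)}$ exceed $2M+4$ for all $k\neq 0$, and then proves exactly your crux estimate $\Dis{\gamma,w(\gamma)}\geq|w|_*$ via Lemma~\ref{geo}, an induction in which the Bounded Geodesic Image Theorem forces every geodesic $[\gamma,w(\gamma)]$ to meet each set $X_r=w_r(C_A)$ or $w_r(C_B)$ of curves missing a translated domain. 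Two remarks: the paper needs no separate ping-pong step, since $\Dis{\gamma,w(\gamma)}\geq|w|_*>0$ already gives $w\neq 1$ and hence $G\cong F_2$; and the point you gesture at with ``$\gtrsim\ell$ far-apart points, in that order'' is carried in Lemma~\ref{geo} by proving the $X_r$ are pairwise disjoint (consecutive ones because $C_A\cap C_B=\nada$, which is precisely where filling enters, and the general case by applying the lemma's conclusion to the degenerate geodesic $[z,z]$), so the geodesic contains at least $|w|_*$ distinct vertices. Your route to convex cocompactness, by contrast, is genuinely different from the paper's. The paper proves the combinatorial Lemma~\ref{technical}: taking the generating set $H_L$ of all elements of $H$ of $\{a,b\}$-length at most $L$, it tracks the surviving ``core'' letters of each generator through cancellation and uses the no-conjugates-of-powers hypothesis to show that at most $K=|H_L|$ generators can feed a single syllable, yielding $K|w|_*\geq|w|_H$. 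Your argument---all-pseudo-Anosov forces $H$ to have no elliptic elements on the Bass--Serre tree of $\Z*\Z$, hence to act freely and cocompactly on its minimal subtree, hence by Milnor--\v{S}varc to embed quasi-isometrically into the tree, which is quasi-isometric to the syllable metric and so composes with the distance estimate---reaches the same inequality by quoting standard Bass--Serre theory. The paper's version is elementary, self-contained, and produces an explicit constant; yours is shorter and explains conceptually why ``no conjugates of powers of the generators'' is exactly the right hypothesis. Both are valid.
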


In \cite{Ma}, the author considers a more general condition for pairs of pure reducible mapping classes, and finds $Q$ such that $G$ as above is a rank two free group, but need not contain pseudo-Anosovs (and therefore need not be convex cocompact).  A more relevant comparison is Thurston's theorem providing the first concrete examples of pseudo-Anosovs \cite{Th} (see also \cite{Pe}).  He proved that if $a$ and $b$ are Dehn twists about filling curves, then one can find an affine structure on $S$ inducing an embedding of $\grp{a,b}$ into $PSL(2,\R)$ under which hyperbolic elements of $PSL(2,\R)$ correspond to pseudo-Anosovs in $\grp{a,b}$.  In particular, every nontrivial element of the free semigroup generated by $a$ and $b^{-1}$ is pseudo-Anosov.  More recently, Hamidi-Tehrani \cite{HT} classified all subgroups generated by a pair of positive Dehn multi-twists.  In particular, if $\alpha$ and $\beta$ are multicurves whose union fills $S$, and $a$ and $b$ are compositions of positive powers of Dehn twists about components of $\alpha$ and $\beta$ respectively, then except for finitely many pairs $n,m$, $\grp{a^n,b^m}$ is a rank-two free group whose only reducible elements are those conjugate to powers of $a$ or $b$ (see also \cite{Ish}).  In a different light, one can consider Proposition~\ref{coreconstruction} a companion to a theorem of Fujiwara that generates convex cocompact free groups using bounded powers of independent pseudo-Anosovs; this theorem appears in Section \ref{schottky} as Theorem~ \ref{fuji}.

\textbf{Acknowledgments.}  Atop my lengthy debt of gratitude sit Dick Canary and Juan Souto for everything entailed in advising me to PhD (out of which work this paper emerged) and Chris Leininger for so many tools of the trade.

\section{Preliminaries}
This section consists of five parts.  The first presents basic definitions around the mapping class group and curve complex; the second relates the curve complexes of a surface and its subsurfaces.  Section \ref{coded} presents a different view of subsurfaces, which facilitates Section \ref{pure} on the case for understanding reducible mapping classes as subsurface pseudo-Anosovs.  Finally, Section \ref{CC} recalls powerful curve complex tools with which we rephrase the classification of elements of $\Mod{S}$.

\subsection{Mapping class group and curve complex}
Throughout, we consider only oriented surfaces whose genus $g$ and number of boundary components $p$ are finite.  Define the \emph{complexity} $\xi(S)$ of a surface $S$ by $\xi = 3g + p - 3$.  We neglect the case where $\xi$ is $-2$ or zero, which means $S$ is a disk, a closed torus, or a pair of pants, because these are never subsurfaces of interest, as explained in Section \ref{CCproj}.  Annuli, for which $\xi = -1$, feature throughout, but primarily as subsurfaces of higher-complexity surfaces.  Let us first assume $\xi \geq 1$, and address the annulus case after.  Note that our definitions will not distinguish between boundary components and punctures, except on an annulus.  One may find in \cite{FM} a discussion on variant definitions of the mapping class group.


Given a surface $S$, its \emph{mapping class group} $\Mod{S}$ is the discrete group of orientation-preserving homeomorphisms from $S$ to itself that setwise fix components of $\del S$.  Much (arguably, everything) about $\Mod{S}$ appears in its action on the isotopy classes of those simple closed curves on $S$ that are \emph{essential}: neither homotopically trivial nor boundary-parallel.  Let us call these classes \emph{curves} for short.  \emph{Pseudo-Anosov} mapping classes are those that fix no finite family of curves; necessarily these have infinite order.  Among the rest, we distinguish those that have finite order, and call the remaining \emph{reducible}.

The \emph{intersection} of two curves $\alpha$ and $\beta$, written $i(\alpha,\beta)$, is the minimal number of points in $\alpha' \cap \beta'$, where $\alpha'$ ranges over all representatives of the isotopy class denoted by $\alpha$ and likewise $\beta'$ ranges over representatives of $\beta$.  Often, we will use the same notation for a curve as an isotopy class and for a representative path on the surface.  For specificity and guaranteed minimal intersections, one may represent all curves by closed geodesics with respect to any pre-ordained hyperbolic metric on $S$.  Two curves \emph{fill} a surface if every curve of that surface intersects at least one of them.  \emph{Disjoint} curves have zero intersection.  All of these definitions extend to \emph{multicurves}, our term for sets of pairwise disjoint curves.

Now let us upgrade the set action of $\Mod{S}$ on curves to a simplicial action on the \emph{curve complex} of $S$, denoted $\C{S}$.  Because $\C{S}$ is a \emph{flag complex}, its data reside entirely in the one-skeleton $\G{S}$: higher-dimensional simplices appear whenever the low-dimensional simplices allow it.  Thus it suffices to consider only the graph $\G{S}$, although we usually refer to the full complex out of habit.

Curves on $S$ comprise the vertex set $\V{S}$, and an intersection rule determines the edges of $\G{S}$.  For a surface with complexity $\xi > 1$, edges join vertices representing disjoint curves.  Therefore $n$-simplices correspond to multicurves with $n$ distinct components, and $\xi$ gives the dimension of $\C{S}$.  When $\xi = 1$, $S$ is a punctured torus or four-punctured sphere.  Because on these any two distinct curves intersect, we modify the previous definition so that edges join vertices representing curves which intersect minimally for distinct curves on that surface---that is, once for the punctured torus and twice for the four-punctured sphere.  In both cases $\C{S}$ corresponds to the Farey tesellation of the upper half plane (vertices sit on rationals corresponding to slopes of curves on the torus).

Give $\G{S}$ the path metric where edges have unit length; this extends simplicially to the full complex.  For any two curves $\alpha$ and $\beta$, let $\Dis{\alpha,\beta}$ denote their distance in $\G{S}$.  Immediately one may observe that $\C{S}$ is locally infinite.  It is not obvious that $\C{S}$ is connected, but the proof is elementary \cite{Ha}.  Furthermore, it has infinite diameter \cite{MM1}.  A deep theorem of Masur and Minsky states that $\C{S}$ is $\delta$-hyperbolic, meaning that for some $\delta$, every edge of any geodesic triangle lives in the $\delta$-neighborhood of the other two edges \cite{MM1}.


Now suppose $S$ is the annulus $S^1 \times [0,1]$.  We modify the definition of the mapping class group to require that homeomorphisms and isotopy fix $\del S$ pointwise.  Parameterizing $S^1$ by angle $\theta$ and $S$ by $(\theta, t)$, the \emph{Dehn twist} on $S$ maps $(\theta, t)$ to $(\theta + 2\pi t,t)$.  The cyclic group generated by this twist is the entire mapping class group of the annulus.  In this group, let us consider all non-trivial elements pseudo-Anosov, for reasons clarified in Section \ref{CC}.  To define $\C{S}$, we contend with the fact that an annulus contains no essential curves.  Instead, each vertex of $\V{S}$ corresponds to the isotopy class of an arc connecting the two boundary components, again requiring isotopy fix boundary pointwise.  Edges connect vertices that represent arcs with disjoint interiors.  Although this complex is locally uncountable, it is not difficult to understand: the distance between two distinct vertices equals one plus the minimal number of interior points at which their representative arcs intersect.  Again, $\C{S}$ is connected, infinite diameter, and $\delta$-hyperbolic---in fact, it is quasi-isometric to the real line, and $\Mod{S}$ acts on it by translation.  Note that our ``curve complex'' for the annulus is more accurately called an \emph{arc} complex.  Generally, we aim to minimize the distinction between annular subsurfaces and subsurfaces with $\xi \geq 1$; for extended treatment of curve complexes and arc complexes, see \cite{MM2}.

\subsection{Subsurface projection}\label{CCproj}Let us relate the curve complex of $S$ to that of $S'$, where $\xi(S) \geq 1$ and $S'$ is an ``interesting'' subsurface of $S$.  Here, a subsurface is defined only up to isotopy, and assumed \emph{essential}, meaning its boundary curves are either essential in $S$ or shared with $\del S$.  This rules out the disk.  We also disregard pants, which have trivial mapping class group.  For the remainder of Section \ref{CCproj}, we assume $S'$ is connected; furthermore, either $\xi(S') \geq 1$ or $S'$ is an annular neighborhood of a curve in $S$.

For ease of exposition and the convenience of considering $\del S'$ a multicurve in $S$, let us make a convention that $\del S'$ refers only to those boundary curves essential in $S$.  In later sections we consider multiple, possibly nested subsurfaces, but these are always implicitly or explicitly contained in some largest surface $S$.

Except when $S'$ is an annulus, it is clear one can embed $\C{S'}$, or at least its vertex set, in $\C{S}$, but we seek a map in the opposite direction.  One can associate curves on the surface to curves on a subsurface via subsurface projection, a notion appearing in \cite{Iv1, Iv2}, expanded in \cite{MM2}, and recapitulated here.  In what follows, we define the projection map $\pi_{S'}$ from $\V{S}$ to the powerset $\mathcal{P}(\V{S'})$.  To start, represent $\gamma \in \V{S}$ by a curve minimally intersecting $\del S'$.

First suppose $S'$ is not an annulus.  If $i(\gamma, \del S') = 0$, then either $\gamma \subset S'$, and we let $\pi_{S'}(\gamma) = \{\gamma\}$, or $\gamma$ misses $S'$, and we let $\pi_{S'}(\gamma)$ be the empty set.  Otherwise, $\gamma$ intersects $\del S'$.  For each arc $\alpha$ of $\gamma \cap S'$, take the boundary of a regular neighborhood of $\alpha \cup \del S'$ and exclude the component curves which are not essential in $S'$.  Because $S'$ is neither pants nor annulus, some curves remain.  Let these comprise $\pi_{S'}(\gamma)$.

In the case that $S'$ is an annulus, when $i(\gamma, \del S') = 0$ we let $\pi_{S'}(\gamma)$ be the empty set.  When $\gamma$ intersects $\del S'$, one expects $\pi_{S'}(\gamma)$ to consist of the arcs of $\gamma$ intersecting $S'$.  However, ambiguity arises because curves such as $\gamma$ and $\del S'$ are defined up to $\del S$-fixing isotopy, but vertices of $\C{S'}$ represent arcs up to $\del S'$-fixing isotopy.  To remedy this, give $S$ a hyperbolic metric.  Consider the cover of $S$ corresponding to the fundamental group of $S'$ embedded in that of $S$.  This cover is a hyperbolic annulus endowed with a canonical ``boundary at infinity'' coming from the boundary of two-dimensional hyperbolic space (the unit circle, if one uses the Poincar\'{e} disk model).  Name the closed annulus $A$, and let $\C{A}$ stand in for $\C{S'}$.  Each $\gamma \in \V{S}$ has a geodesic representative which lifts to $A$, and if $\gamma$ intersects $S'$, some of the lifts connect the boundary components.  Let these comprise $\pi_S'(\gamma)$.

We will frequently say something like $\gamma$ \emph{projects to} $S'$ to mean $\pi_{S'}(\gamma)$ is not the empty set. Where $X$ is some collection of curves $\{\gamma_i\}$, let $\pi_{S'}(X) = \bigcup_i \pi_{S'}(\gamma_i)$.  If $\pi_{S'}(X)$ is not empty, let $\pdiam{S'}{X}$ denote its diameter in $\C{S'}$; omit the subscript $S'$ to mean diameter of $X$ in $\C{S}$ itself.  We are content with a map from $\C{S}$ to subsets of $\C{S'}$, rather than $\C{S'}$ directly, precisely because multicurves have bounded-diameter projection:

\begin{lmm}\label{projbound}Suppose $\gamma$ is a multicurve of $S$, and $S'$ a subsurface.  If $\gamma$ projects to $S'$, $\pdiam{S'}{\pi_{S'}(\gamma)} \leq 2$.\end{lmm}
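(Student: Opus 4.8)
The plan is to fix geodesic (or minimal-position) representatives so that $\gamma$ meets $\del S'$ in the fewest points, and then to work with the disjoint system $\Gamma = \gamma \cap S'$. Since $\gamma$ is a multicurve its components are pairwise disjoint, so the arcs and closed curves comprising $\Gamma$ are pairwise disjoint in $S'$. Every vertex of $\pi_{S'}(\gamma)$ arises by surgering a single one of these pieces against $\del S'$ (or, for a component $\gamma_i \subset S'$, equals $\gamma_i$ itself). Because diameter is the supremum of pairwise distances, it suffices to bound $\dis{S'}{a,b}$ by $2$ for two vertices $a,b$ obtained from two pieces $\alpha,\beta \in \Gamma$, where $\alpha$ and $\beta$ are disjoint and may well come from the same component. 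This is the crucial reduction: the argument never needs to see more than two disjoint arcs at once.

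The core of the argument is a single ``centering'' curve. I would choose thin regular neighborhoods and let $c$ be an essential boundary component of $N(\alpha \cup \beta \cup \del S')$. The point is that nested regular neighborhoods have disjoint frontiers: since $N(\alpha \cup \del S') \subset N(\alpha \cup \beta \cup \del S')$, the curve $a$, which lies on $\del N(\alpha \cup \del S')$, is disjoint from $c \subset \del N(\alpha \cup \beta \cup \del S')$; symmetrically $b$ is disjoint from $c$. Hence $\dis{S'}{a,c} \le 1$ and $\dis{S'}{c,b} \le 1$, giving $\dis{S'}{a,b} \le 2$. The whole-curve cases are identical: if $a$ is a component $\gamma_i \subset S'$ then $\gamma_i$ is one of $\alpha,\beta$ and again lies inside $N(\alpha \cup \beta \cup \del S')$, disjoint from $c$, while two whole-curve components are disjoint outright.

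The annular case is separate but easier. Passing to the compactified cover $A$ that stands in for $\C{S'}$, the geodesic lifts of distinct components of $\gamma$ are disjoint, so the crossing lifts that make up $\pi_{S'}(\gamma)$ are pairwise disjoint arcs in $A$; by the explicit annular metric, with distance equal to one plus the number of interior intersections, any two of them lie at distance at most $1$, which is even stronger than required.

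The main obstacle is precisely the hypothesis that the centering curve $c$ be essential. This fails exactly when $\alpha \cup \beta \cup \del S'$ fills $S'$, which a complexity count rules out once $\xi(S')$ is large, but which genuinely occurs on the sporadic surfaces with $\xi(S') = 1$ (the once-punctured torus and four-punctured sphere, where two disjoint arcs can already fill). In those cases I would verify the bound directly from the Farey description of $\C{S'}$: disjoint arcs correspond to Farey-adjacent slopes, whose associated curves are again adjacent, so the two projections lie at distance at most $1$. A secondary point requiring care is the overlap of the thin neighborhoods inside the boundary collar of $\del S'$, which is why two surgery curves need not be disjoint and the bound is $2$ rather than $1$; the curve $c$ is exactly what absorbs this ambiguity.
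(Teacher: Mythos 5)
Your argument is essentially the standard proof of this fact --- indeed it is, in outline, the proof of Lemma 2.3 in \cite{MM2} --- whereas the paper offers no proof at all: it simply cites that lemma, together with the correction in \cite{Mi}, and dodges the delicate configurations by stating the result only for genuine multicurves. So you and the paper take genuinely different routes in the trivial sense that you actually supply an argument. Your reduction to two disjoint pieces, the centering curve $c \subset \del N(\alpha \cup \beta \cup \del S')$ with nested neighborhoods giving $i(a,c)=i(b,c)=0$ and hence $\dis{S'}{a,b}\leq 2$, and the separate treatment of the annular case via pairwise-disjoint lifts (which even yields diameter $\leq 1$ there) are all correct and are exactly what a self-contained proof should contain.

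The one soft spot is the step you wave at with ``a complexity count rules out [filling] once $\xi(S')$ is large.'' That count is not as clean as you suggest: $N(\alpha\cup\beta\cup\del S')$ has Euler characteristic $-2$, and since the complementary regions of a filling configuration may be peripheral annuli as well as disks, the naive bound only gives $\chi(S')\geq -2$, which on its face still admits the twice-punctured torus ($\xi(S')=2$). Confining the failure of $c$ to the surfaces with $\xi(S')=1$ therefore needs an additional topological argument (tracking how the two $1$-handles act on the inner boundary circles of the collar of $\del S'$, and ruling out an all-peripheral complement when $\xi(S')\geq 2$). This is precisely the kind of sporadic bookkeeping in which the original statement of \cite{MM2} contained its error, so it should be carried out rather than asserted; once it is, your Farey argument correctly disposes of the $\xi(S')=1$ surfaces, where the centering argument is unavailable in any case because disjointness is not the edge relation in $\C{S'}$.
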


This fact appears as Lemma 2.3 in \cite{MM2}.  Note that complexity $\xi$ in \cite{MM2} differs from our definition by three.  Also, there is a minor error in Lemma 2.3, corrected in \cite{Mi} (pg. 28), which we avoid by stating the lemma for multicurves rather than simplices of $\C{S}$---these do not coincide if $\xi(S) = 1$.

When multicurves $\alpha$ and $\beta$ both project to $S'$, define their \emph{projection distance} $\dis{S'}{\alpha,\beta}$ as $\pdiam{S'}{\pi_{S'}(\alpha) \cup \pi_{S'}(\beta)}$.  This is a ``distance'' in that it satisfies the triangle inequality and symmetry, but it need not discern curves: Lemma~\ref{projbound} implies that disjoint multicurves have a projection distance of at most two, when defined.  In the other extreme, one easily finds examples of curves close in $\C{S}$ with large projection distance in $\C{S'}$---a small illustration of the great wealth of structure that opens up when one considers not only $\C{S}$ but curve complexes of all subsurfaces (see, for example, \cite{MM2}).

\subsection{Cut-coded subsurfaces and domains}\label{coded}
We now present an alternate definition of subsurface that ducks the nuisance of disconnected subsurfaces containing annular components parallel to the boundary of other components.  These are the only subsurfaces capable of being mutually nested (via isotopy) yet not topologically equivalent.  Our technical antidote may seem tedious, but as an upside, it translates our current notion of a subsurface into an object encoded unambiguously by curves, a recurrent theme of this paper.  Moreover, the new viewpoint facilitates the next section's definition of active subsurfaces, based on Ivanov's work on $\Mod{S}$ subgroups.  The efficient reader is welcome to skim the definition, taking note of Lemmas~\ref{threecases} and \ref{nestseq}, and rely on Figure \ref{figcutcoded} for intuition.

A \emph{cut-coded subsurface} of $S$ consists of two pieces of information: (1) a multicurve $\gamma$ and (2) a partition of the non-pants components of $S \frgt \gamma$ into two sets: excluded and included components.

Let us clarify what subsurface $S'$ this data is meant to describe.  Label the component curves of $\gamma$ by $\gamma_1$, $\dots \gamma_n$, and the included components $A_1, \dots A_m$.  The multicurve $\gamma$ contains each component of $\del A_i$, but some $\gamma_j$ may not be a boundary curve for any $A_i$.  Let $N_j$ be regular neighborhoods of those $\gamma_j$ belonging to no $\del A_i$.  Then $S'$ consists of the union of the $A_i$, seen as subsurfaces, and the implied annuli $N_j$.  Ignoring multiplicity, $\del S'$ and $\gamma$ are the same multicurve.

\begin{figure}[h]
\begin{center}
\includegraphics[width=5in]{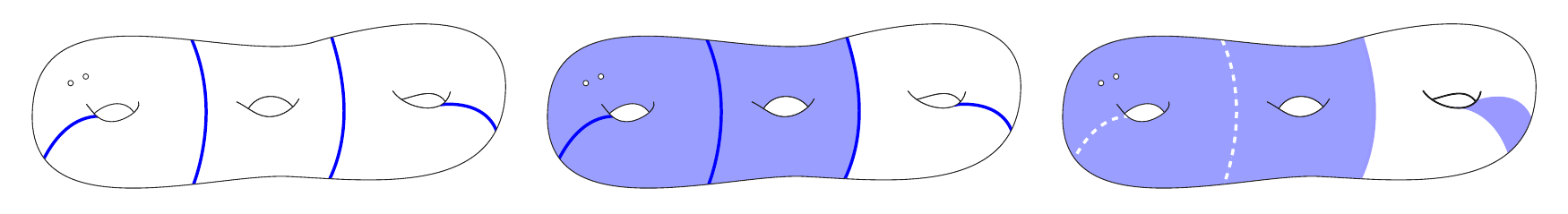}
\caption{\emph{How to define a cut-coded subsurface.}  From left to right: (1) Set multicurve.  (2) Choose included components (shaded).  (3) Domains (shaded) correspond to included components and implied annuli.}
\label{figcutcoded}
\end{center}
\end{figure}

Call the $A_i$ \emph{component domains} and the $N_j$ \emph{annular domains} of $S'$.  We use \emph{domain} to refer to either kind, or any subsurface that may appear as a domain---in other words, any connected (essential) non-pants subsurface.

Not all subsurfaces are cut-coded, as the latter never include pants, parallel annuli, or annuli parallel to the boundary of a component domain.  Cut-coded subsurfaces are exactly those appearing as active subsurfaces of mapping classes, which the next section details.

The cut-coded subsurfaces of $S$ admit a partial order $\subset$ detected by subsurface projection.  Say a subsurface $A$ \emph{nests} in $B$ if $A$ may be isotoped into $B$.  Now suppose $A$ and $B$ are cut-coded subsurfaces with domains $A_i$ and $B_j$ respectively.  Say $A \subset B$ if every $A_i$ nests in some $B_j$.  Transitivity and reflexivity of $\subset$ are obvious, but antisymmetry is relatively special.  If $A \subset B$ and $B \subset A$, one can check $A$ and $B$ are given by the same data as cut-coded subsurfaces, ultimately because they never contain an annulus parallel to another domain.  In contrast, general disconnected subsurfaces can be mutually nested but not isotopic.

Call two subsurfaces \emph{disjoint} if they may be isotoped apart, and \emph{overlapping} if they are neither disjoint nor nested; note that overlapping subsurfaces are distinct by definition.  Projection determines relations between domains:

\begin{lmm}\label{threecases}Suppose $A$ and $B$ are domains in $S$.
\begin{itemize}
\item[(i)] $\pi_{A}(\del B)$ is empty if and only if $A$ nests in $B$ or its complement.
\item[(ii)] $A$ and $B$ overlap if and only if $\del A$ projects to $B$ and $\del B$ to $A$.
\item[(iii)] If $\del A$ projects to $B$ but $\pi_{A}(\del B)$ is empty, $A$ nests in $B$.
\item[(iv)] If $\pi_{A}(\del B)$ and $\pi_{B}(\del A)$ are both empty, $A$ and $B$ are equal or disjoint.
\end{itemize}
\end{lmm}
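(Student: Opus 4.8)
The plan is to prove part (i) as a genuinely topological statement and then obtain (ii)--(iv) as formal consequences of (i) together with the antisymmetry of $\subset$ established just above. The key preliminary I would record is a characterization of when a single curve projects trivially: for a non-annular domain $A$, $\pi_A(\delta) = \nada$ exactly when $\delta$ can be isotoped off $A$ (it neither crosses $\del A$ nor lies in the interior of $A$), while for an annular $A$ with core $c$, $\pi_A(\delta) = \nada$ exactly when $i(\delta,c)=0$; in either case $\pi_A(\delta) = \nada$ is equivalent to $\delta$ being isotopic into the complement of $A$. This is immediate from the definition of $\pi_{S'}$ in Section~\ref{CCproj}, once one notes that the only way to get a nonempty projection without crossing $\del A$ is to have $\delta$ interior to a non-annular $A$, which projects to $\{\delta\}$.

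Granting this, I would prove (i) by realizing $\del B$ as geodesics for the fixed hyperbolic metric of Section~2.1, so that its components are automatically in minimal position and can be made simultaneously disjoint from $A$ whenever each is individually. If $\pi_A(\del B)=\nada$, then no component of $\del B$ lies inside $A$ and each is isotopic off $A$; simultaneous minimal position then places the whole multicurve $\del B$ in the complement of $A$. Since $A$ is connected and misses $\del B$, it lies in a single complementary region of $\del B$, and those regions are exactly $B$ together with the components of its complement; hence $A$ nests in $B$ or in its complement. Conversely, if $A$ nests in $B$ (resp.\ in its complement), then $\del B$ is disjoint from the interior of $A$, so each component of $\del B$ either misses $A$ or is parallel to $\del A$, and in all cases projects trivially, giving $\pi_A(\del B)=\nada$.

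With (i) available, the rest is bookkeeping, using (i) with the roles of $A$ and $B$ exchanged to get $\pi_B(\del A)=\nada$ iff $B$ nests in $A$ or its complement. For (ii): if $\del A$ projects to $B$ and $\del B$ to $A$, then $A$ nests in neither $B$ nor its complement, ruling out $A\subset B$ and $A$ disjoint from $B$, and symmetrically $B\not\subset A$, so $A$ and $B$ overlap; the converse is the contrapositive. For (iii): $\pi_A(\del B)=\nada$ gives $A\subset B$ or $A$ in the complement of $B$, but the latter would make $A$ and $B$ disjoint and hence $\pi_B(\del A)=\nada$, contradicting that $\del A$ projects to $B$, so $A\subset B$. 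For (iv): $\pi_B(\del A)=\nada$ gives $B\subset A$ or $B$ in the complement of $A$; the second case makes $A$ and $B$ disjoint, and in the first case $\pi_A(\del B)=\nada$ yields either $A\subset B$ (so $A=B$ by antisymmetry) or $A$ in the complement of $B$ (again disjoint).

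I expect the only real difficulty to sit in (i), in two spots: justifying that pairwise-disjoint curves with zero intersection with $A$ can be isotoped off $A$ \emph{simultaneously} (handled cleanly by passing to geodesic representatives), and correctly treating components of $\del B$ parallel to $\del A$ as well as the annular cases, where $\del A$ collapses to a single core curve. The degenerate configuration in which $A$ is an annulus parallel to a component of $\del B$ deserves an explicit sanity check against (iv): there both projections vanish and $A\neq B$, yet $A$ pushes off $B$ and so counts as disjoint, consistent with the stated conclusion.
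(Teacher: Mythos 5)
Your proposal is correct and follows essentially the same route as the paper: everything is reduced to part (i), which is proved by putting curves in minimal position and using connectedness of $A$ to conclude that $A$ sits in a single complementary region of $\del B$, after which (ii)--(iv) are formal consequences (the paper simply asserts these follow, while you spell them out). The only real variation is cosmetic--the paper realizes $A$ as a regular neighborhood of a filling pair $\alpha_1\cup\alpha_2$ and isotopes that off $\del B$, whereas you isotope $\del B$ off $A$ via geodesic representatives, which has the small advantage of covering the annular case of $A$ explicitly (the filling-pair device does not literally apply to an annulus).
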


\begin{proof}  Statements (ii) -- (iv) derive from (i).  The forward implication of (i) requires connectedness of $A$: one may choose curves $\alpha_1$ and $\alpha_2$ that fill $A$, so that $A$ is represented by a regular neighborhood of $\alpha_1 \cup \alpha_2$ with disks and annuli added to fill in homotopically trivial or $\del S$-parallel boundary components.  If $\pi_{A}(\del B)$ is empty, then $\del B$ is disjoint from $\alpha_1 \cup \alpha_2$.  Therefore the two curves, and consequently $A$ itself, can be isotoped either entirely inside $B$ or into its complement.  The reverse implication is self-evident.\end{proof}

Consider a strictly increasing sequence $A^1 \subsetneq A^2 \subsetneq \dots \subsetneq A^m$ of cut-coded subsurfaces of $S$.  Let $\alpha$ be a maximal multicurve (i.e., a pants decomposition) including $\del A_i$ for all $i$.  Each step of the sequence corresponds to some curve of $\alpha$ appearing for the first time as a component of $\del A_k$ or an essential curve in $A_k$, so the maximum length of the sequence is twice the number of components of $\alpha$.  We have just observed:

\begin{lmm}\label{nestseq}If $\xi(S) \geq 1$, a strictly increasing sequence of nonempty cut-coded subsurfaces of $S$ has length at most $2\xi$.\end{lmm}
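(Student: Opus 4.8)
The plan is to convert the statement into a counting argument carried out on a single pants decomposition. First I would replace the chain by honest representatives: since $A^{k}\subset A^{k+1}$ means $A^k$ can be isotoped into $A^{k+1}$, a finite induction lets me realize the whole chain simultaneously, so that $A^1\subset A^2\subset\cdots\subset A^m$ holds as a nested sequence of actual subsurfaces with pairwise disjoint boundaries. The union $\bigcup_k \del A^k$ is then a multicurve, which I extend to a pants decomposition $\alpha$ of $S$. Because $\xi(S)\geq 1$, every pants decomposition of $S$ has exactly $\xi$ components, so $|\alpha|=\xi$.

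Next I would attach to each stage a monotone invariant read off against $\alpha$. For a curve $c\in\alpha$ and a stage $k$, I record whether $c$ is exterior to $A^k$, a component of $\del A^k$, or an essential curve lying in the interior of $A^k$. As $k$ increases the subsurface only grows, so a given $c$ can pass from exterior to boundary to interior but never backwards; in particular each $c$ serves as a \emph{first-time} boundary curve at most once and as a first-time interior curve at most once. Summing over $\alpha$, the total number of such first appearances across the entire chain is therefore at most $2|\alpha|=2\xi$. Since $A^1$ is nonempty it already contributes at least one appearance, so it suffices to show that each strict step $A^k\subsetneq A^{k+1}$ records at least one new appearance. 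The chain then accounts for at least $1+(m-1)=m$ appearances, whence $m\leq 2\xi$.

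The heart of the argument, and the step I expect to give the most trouble, is precisely this last claim: that a strict inclusion cannot leave both the cumulative set of boundary curves and the set of essential interior curves unchanged. Equivalently, these two pieces of data must determine the cut-coded subsurface within the chain. The content here is exactly the phenomenon the cut-coded formalism was built to control: two subsurfaces with identical boundary and interior curves can differ only by an annulus parallel to another domain, and cut-coded subsurfaces are forbidden from containing such annuli. I would therefore isolate the annular domains as the delicate case and appeal to the antisymmetry of $\subset$ already established—mutually nested cut-coded subsurfaces carry the same data—to conclude that equal invariants force $A^k=A^{k+1}$, contrary to strictness. The remaining bookkeeping, that a genuinely new pants piece or annular domain entering $A^{k+1}$ promotes some curve of $\alpha$ to a new first appearance, is then routine.
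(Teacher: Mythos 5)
Your argument is essentially identical to the paper's: the paper also realizes the chain inside a single pants decomposition $\alpha$ containing all the $\del A^i$ and counts, for each curve of $\alpha$, its first appearance as a boundary component or as an essential interior curve, giving the bound $2|\alpha| = 2\xi$. You simply spell out the monotonicity and the "each strict step forces a new first appearance" claim (via antisymmetry of $\subset$ for cut-coded subsurfaces) in more detail than the paper's two-sentence observation, so the proposal is correct and follows the same route.
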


It is easy to construct sequences realizing the upper bound.

\subsection{Active subsurfaces and the Omnibus Theorem}\label{pure}
Thurston originally classified elements of $\Mod{S}$ by their action on the space of projective measured foliations, a piecewise linear space obtained by completing and projectivizing the space of weighted curves on $S$.  He defined a \emph{pseudo-Anosov diffeomorphism} as one that fixes a pair of transverse projective measured foliations, and proved that any pseudo-Anosov mapping class (i.e., any mapping class fixing no multicurve) has a representative pseudo-Anosov diffeomorphism.  In fact he proved more:

\begin{thm}[Thurston, \cite{Th},\cite{FLP}]\label{ThurpA}Every element $f$ of $\Mod{S}$ has a representative diffeomorphism $F$ such that, after cutting $S$ along some 1-dimensional submanifold $C$, $F$ restricts to a pseudo-Anosov diffeomorphism on the union of some components of $S \frgt C$, and has finite-order on the union of the rest.\end{thm}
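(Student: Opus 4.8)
The plan is to produce the submanifold $C$ and the diffeomorphism $F$ by studying how $f$ acts on Thurston's compactification of Teichm\"uller space, and then to peel off pseudo-Anosov and finite-order behavior by induction on complexity. First I would recall that Teichm\"uller space $\mathcal{T}(S)$ is homeomorphic to an open ball of dimension $2\xi$, and that Thurston compactifies it by the sphere $\mathcal{PMF}(S)$ of projective measured foliations (of dimension $2\xi - 1$), so that $\overline{\mathcal{T}(S)} = \mathcal{T}(S) \cup \mathcal{PMF}(S)$ is a closed ball on which $\Mod{S}$ acts by homeomorphisms. Since $f$ acts as a homeomorphism of this closed ball, the Brouwer fixed point theorem provides a fixed point $\mu \in \overline{\mathcal{T}(S)}$, and the location of $\mu$ drives a trichotomy.

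If $\mu$ lies in the interior $\mathcal{T}(S)$, then $f$ fixes a marked hyperbolic structure and is therefore isotopic to an isometry of that structure; since the isometry group of a finite-type hyperbolic surface is finite, $f$ has finite order, and I may take $C = \nada$ with $F$ the isometry. Otherwise every fixed point lies on the boundary, and I would analyze a fixed projective class $\mu \in \mathcal{PMF}(S)$. The decisive dichotomy is whether the underlying foliation fills $S$.

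In the filling case I would argue $f$ is pseudo-Anosov, again taking $C = \nada$ and $F$ the pseudo-Anosov diffeomorphism. In the non-filling case the fixed foliation---together with the curves disjoint from it---furnishes a nonempty $f$-invariant (up to isotopy) multicurve; taking the \emph{canonical} such reduction system produces the cut locus $C$. Cutting $S$ along $C$ yields finitely many components, each of strictly smaller complexity, which $f$ permutes; replacing $f$ by the first-return map on each orbit of components and applying the statement inductively (the induction terminating because $\xi$ strictly drops at each cut, cf.\ Lemma~\ref{nestseq}) classifies each piece as pseudo-Anosov or finite order. Assembling these gives the desired $F$ together with the partition of the components of $S \frgt C$ into a pseudo-Anosov part and a finite-order part.

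The main obstacle is the filling case: extracting the full pseudo-Anosov structure from a single fixed filling projective foliation. One must upgrade that lone fixed foliation to a transverse pair of foliations, both fixed by $f$, with $f$ multiplying their transverse measures by $\lambda$ and $\lambda^{-1}$ for some $\lambda > 1$, and then realize this pair by a quadratic differential to obtain the affine pseudo-Anosov representative. This is where Thurston's theory of measured foliations and their correspondence with quadratic differentials does the real work, and it is the step that genuinely requires the analytic input behind \cite{Th} and \cite{FLP}; the reducible and periodic cases are comparatively formal once the compactification and Brouwer's theorem are in hand.
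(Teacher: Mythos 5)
The paper does not prove Theorem~\ref{ThurpA}: it is quoted from \cite{Th} and \cite{FLP} and used as a black box (the subsequent material on canonical reduction multicurves and active subsurfaces builds on it but never revisits its proof). So there is no in-paper argument to compare against. What you have written is the standard skeleton of the Thurston/FLP proof --- act on the Thurston compactification $\overline{\mathcal{T}(S)} = \mathcal{T}(S) \cup \mathcal{PMF}(S)$, apply Brouwer, and branch on where the fixed point lives --- and the interior case, the non-filling boundary case with induction on complexity, and the honest acknowledgment that the filling case is where the analytic content of \cite{Th} and \cite{FLP} lives are all in the right place.

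There is, however, one genuine gap in the trichotomy beyond the deferred analytic work. A fixed point $[\mathcal{F}] \in \mathcal{PMF}(S)$ with $\mathcal{F}$ filling (arational) means $f(\mathcal{F}) = \lambda \mathcal{F}$ for some $\lambda > 0$, and ``filling $\Rightarrow$ pseudo-Anosov'' is simply false without separating the cases $\lambda \neq 1$ and $\lambda = 1$: a finite-order mapping class such as the hyperelliptic involution fixes arational foliations with $\lambda = 1$, so a filling fixed foliation alone certifies nothing. The standard proof devotes a separate lemma to showing that an arational fixed foliation with $\lambda = 1$ forces $f$ to be periodic (via unique ergodicity / the structure of the stabilizer of such a foliation), and only in the case $\lambda \neq 1$ does one run the argument you describe --- applying the same reasoning to $f^{-1}$ to produce a second fixed foliation with multiplier $\lambda^{-1}$, proving the two are transverse, and realizing the pair by a quadratic differential or Markov partition. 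Your case division (``otherwise every fixed point lies on the boundary'') does not by itself rule out landing on a $\lambda = 1$ arational fixed point, so as written the filling branch of your argument would fail; you need either to add the $\lambda = 1$ lemma or to restructure the proof in the usual contrapositive form (assume $f$ neither periodic nor reducible, and successively exclude the interior, non-arational, and $\lambda = 1$ possibilities).
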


Birman-Lubotzky-McCarthy proved that $C$ represents a unique isotopy class, and they gave a simple way to find it \cite{BLM}.  We call this isotopy class the \emph{canonical reduction multicurve} of $f$.  Ivanov generalized to subgroups both the classification and the canonical cutting method \cite{Iv2}.  We build the notion of an \emph{active subsurface} according to this last, largest perspective.  Let us emphasize that, while we aim to paint a picture that seems perfectly natural, the validity of the definitions in this section depends on multiple lemmas and theorems from \cite{Iv2}, which serves as reference for assertions presented without proof.

Ivanov identified a congenial property of many mapping classes.  Call a mapping class \emph{pure} if, in the theorem above, it restricts to either the identity or a pseudo-Anosov on each component (in particular, no components are permuted).  Because one may take the empty set for $C$ in Theorem~\ref{ThurpA}, any pseudo-Anosov mapping classes is pure.  The property of being pure is most useful for reducible mapping classes, because cutting along $C$ ``reduces'' $S$ to a collection of smaller subsurfaces.  The point in what follows is to formalize this procedure.

Call a subgroup pure if it consists entirely of pure mapping classes.  Nontrivial pure mapping classes have infinite order, so they let us ignore the complications of torsion.  Fortunately, the mapping class group has finite-index pure subgroups (Theorem 3, \cite{Iv2}).  In particular, one can take the kernel of the homomorphism $\Mod{S} \onto \Aut{H_1(S,Z/3\Z)}$, induced by the action of $\Mod{S}$ on homology.  Name this subgroup $\Gamma(S)$.

Let us first define the canonical reduction multicurve $\sigma(G)$ and active subsurface $\A{G}$ of a pure subgroup $G < \Mod{S}$.  The multicurve $\sigma(G)$ consists of all curves $\gamma$ such that (i) $G$ fixes $\gamma$, and (ii) if some curve $\beta$ intersects $\gamma$, then $G$ does not fix $\beta$.  The active subsurface $\A{G}$ is cut-coded with multicurve $\gamma$.  Included components correspond to those on which some element of $G$ ``acts pseudo-Anosov.''  Specifically, for each component $Q$ of $S \frgt \sigma(G)$, we have a homomorphism $\rho_Q: G \to \Mod{Q}$ such that $\rho_Q(g)$ is the mapping class of $F|_{Q}$, where $F$ is a homeomorphism representing $g$.  Let $G_Q$ denote the image $\rho_Q(G)$.  Because $G$ is pure, for each component $Q$, the image $G_Q$ either contains a pseudo-Anosov, in which case $Q$ is included, or is the trivial group, in which case $Q$ is excluded (Theorem 7.16, \cite{Iv2}).

It follows that the annular domains of $\A{G}$ correspond to neighborhoods of those $\gamma \in \sigma(G)$ that only bound components $Q$ for which $G_Q$ is trivial.  Because by definition $\gamma$ is not superfluous, some $g \in G$ restricts, in a neighborhood of $\gamma$, to a power of a Dehn twist about $\gamma$.  Section \ref{CC} justifies why we consider Dehn twists annulus pseudo-Anosovs.

If $G$ is an arbitrary subgroup, choose a finite-index pure subgroup $G'$ and define $\sigma(G) = \sigma(G')$ and $\A{G}= \A{G'}$.  Any choice of $G'$ gives the same multicurve, and one can always take $G' = G \cap \Gamma(S)$.  $G$ acts on $S \frgt \sigma(G)$, although its elements may permute the components.  For each component $Q$, one can define $\rho_Q$ on the finite-index subgroup of $G$ stabilizing $Q$.  Each image $G_Q$ is finite or contains a pseudo-Anosov, and $\sigma(G)$ is the minimal multicurve with this property (Theorem 7.16, \cite{Iv2}).  It follows that an infinite subgroup $G$ contains a pseudo-Anosov if and only if $\sigma(G)$ is empty; let us call such a subgroup \emph{irreducible}.

For any mapping class $g$, let $\sigma(g) = \sigma(\grp{g})$ and $\A{g} = \A{\grp{g}}$.  In this terminology, we recast Ivanov's Theorem 6.3 \cite{Iv2} as follows:
\begin{thm}\label{omnibus}For any $G < \Mod{S}$ there exists $f \in G$ such that $\A{f} = \A{G}$.\end{thm}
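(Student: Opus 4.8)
The plan is to reduce to the case of a pure subgroup, establish the easy inclusion $\A{g} \subset \A{G}$ for every $g \in G$, and then manufacture the required $f$ by a bootstrapping argument that enlarges the active subsurface one domain at a time, with Lemma~\ref{nestseq} guaranteeing termination. First I would replace $G$ by the pure subgroup $G \cap \Gamma(S)$; since $\A{G}$ is \emph{defined} through this subgroup and any element produced there lies in $G$, it suffices to treat pure $G$. For pure $G$ every element fixes each curve of $\sigma(G)$ and each component of $S \frgt \sigma(G)$, so each $\rho_Q$ is defined on all of $G$, and by Theorem 7.16 of \cite{Iv2} the image $G_Q$ is either trivial (for excluded $Q$) or contains a pseudo-Anosov (for included $Q$), with $\sigma(G)$ minimal for this property. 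Minimality yields $\A{g} \subset \A{G}$: a region on which $g$ restricts to a pseudo-Anosov is disjoint from the $g$-fixed multicurve $\sigma(G)$, hence nests in a component $Q$ on which $\rho_Q(g)$ is nontrivial, forcing $G_Q$ to contain a pseudo-Anosov and $Q$ to be included; the annular domains of $\A{g}$ are absorbed the same way. Thus it remains only to produce a single $f$ with $\A{f} \supset \A{G}$.

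The next observation is that $\A{G}$ is witnessed domain by domain. For each included component domain $Q$ there is $a_Q \in G$ with $\rho_Q(a_Q)$ pseudo-Anosov, so $Q$ nests in a domain of $\A{a_Q}$; for each annular domain about a curve $\gamma$ there is $t_\gamma \in G$ restricting near $\gamma$ to a nontrivial twist (the curve is not superfluous), so that this annulus is a domain of $\A{t_\gamma}$. Consequently, whenever $\A{f} \subsetneq \A{G}$, some domain $D$ of $\A{G}$ fails to nest in $\A{f}$, and the corresponding witness $h \in \{a_Q, t_\gamma\}$ satisfies $\A{h} \not\subset \A{f}$.

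The heart of the matter, and the step I expect to be the main obstacle, is the following growth lemma: if $f,h \in G$ are pure with $\A{h} \not\subset \A{f}$, then for all sufficiently large $n,m$ one has $\A{f^{\,n} h^{\,m}} \supsetneq \A{f}$. I would prove it by showing $\A{f^{\,n} h^{\,m}}$ contains every domain $D$ of $\A{f}$ and of $\A{h}$: on such a $D$ at least one of $f,h$ restricts to a pseudo-Anosov or a nontrivial twist, and a North--South dynamics/ping-pong argument on $\mathcal{PMF}(D)$---equivalently, a large projection distance in $\C{D}$ extracted from the bounded geodesic image phenomenon---shows that for large exponents the high power of the dominating factor forces $f^{\,n} h^{\,m}$ itself to act as a pseudo-Anosov on $D$. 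The delicacy is that $f^{\,n}h^{\,m}$ need not preserve $D$ and that both factors may act nontrivially on it, so the exponents must be chosen large enough that one factor's attracting/repelling pair dominates the bounded displacement of the other, simultaneously across all domains; this is precisely the content behind Ivanov's Theorem~6.3.

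Granting the growth lemma, the theorem follows by iteration. Starting from any $f_0 \in G$, as long as $\A{f_i} \neq \A{G}$ I choose a witness $h$ with $\A{h}\not\subset\A{f_i}$ and set $f_{i+1} = f_i^{\,n} h^{\,m}$ with exponents large enough that $\A{f_{i+1}} \supsetneq \A{f_i}$. This produces a strictly increasing chain of cut-coded subsurfaces, which by Lemma~\ref{nestseq} has length at most $2\xi$. Hence the process halts at some $f_k$ with $\A{f_k} = \A{G}$, and $f = f_k$ is the desired element.
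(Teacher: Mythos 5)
The first thing to say is that the paper does not prove Theorem~\ref{omnibus} at all: the definitions of $\sigma(G)$ and $\A{G}$ in Section~\ref{pure} are arranged precisely so that the statement becomes a restatement of Ivanov's Theorem~6.3, which is simply cited from \cite{Iv2}, and the paper explicitly notes that it does not need this theorem for its own results. Your outer scaffolding is sound and consistent with what the paper does elsewhere: the reduction to the pure subgroup $G \cap \Gamma(S)$, the inclusion $\A{g} \subset \A{G}$ (which is Lemma~\ref{nest1} with $H = \grp{g}$), the domain-by-domain witnesses, and the termination of a strictly increasing chain of cut-coded subsurfaces via Lemma~\ref{nestseq} all check out; indeed this is essentially the skeleton the paper uses later to prove Theorem~\ref{srpa}.

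The genuine gap is the growth lemma, which you correctly identify as the main obstacle and then do not prove. The claim that $\A{f^{\,n}h^{\,m}} \supsetneq \A{f}$ for all large $n,m$ whenever $\A{h} \not\subset \A{f}$ carries the entire content of the theorem, and the sketch offered is not yet an argument. Two concrete problems: first, the conclusion you need is not that $f^{\,n}h^{\,m}$ ``acts as a pseudo-Anosov on $D$'' for each domain $D$ of $\A{f}$ or $\A{h}$ --- as you note, $f^{\,n}h^{\,m}$ need not preserve $D$, and when domains of $f$ and $h$ overlap the right statement is that $D$ nests in a domain of $\A{f^{\,n}h^{\,m}}$, typically the subsurface filled by the overlapping domains; second, making the North--South dynamics or projection-distance estimate work \emph{uniformly} over all domains for a single choice of $n,m$ is exactly the delicate part of Ivanov's proof, and is what the paper's Proposition~\ref{zigzag} and Lemma~\ref{tangle} accomplish with substantial work in their special setting. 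As written, you have reduced Ivanov's Theorem~6.3 to a lemma you yourself describe as ``precisely the content behind Ivanov's Theorem~6.3.'' Either cite \cite{Iv2} for that step, as the paper does for the whole theorem, or prove the growth lemma in earnest along the lines of Section~\ref{blueprint}.
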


After Lemma~\ref{nest1} below, we can recognize Theorem~\ref{omnibus} as Mosher's Omnibus Subgroup Theorem.  We do not require this theorem for our proofs, only the validity of the definitions on which it is based.  In fact, one can prove Theorem~\ref{mpa} with no mention of active subsurfaces for non-pure subgroups.  However, its definition allows us to state the Main Theorem, Theorem~\ref{srpa}, which takes the more general, perhaps more useful point of view of Theorem~\ref{omnibus}, adding the benefit of an $f$ with bounded word length.

Facts about active subsurfaces occupy the remainder of this section.  Directly from definitions, we derive Lemma~\ref{fixmove} below.  With this we obtain Lemmas~\ref{nest1} -- \ref{nest}, which enable our proof of the main theorem.  Let us refer to domains of $\A{G}$ or $\A{g}$ as domains of $G$ or $g$ respectively.  Say $G$ \emph{moves} the curve $\gamma \in \V{S}$ if some element of $G$ does not fix $\gamma$.

\begin{lmm}\label{fixmove}Suppose $H$ and $G$ are pure subgroups of $\Mod{S}$ and $\gamma \in \V{S}$.
\begin{itemize}
\item[(a)] $H$ moves $\gamma$ if and only if $\gamma$ projects to some domain of $H$.
\item[(b)] $H$ and $G$ fix and move the same curves if and only if $\A{H} = \A{G}$.
\end{itemize}
\end{lmm}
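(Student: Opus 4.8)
The plan is to prove both parts by unwinding the definitions of active subsurface and canonical reduction multicurve, with subsurface projection as the central tool linking the combinatorics of curves to the algebra of the subgroup.

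For part (a), the key observation is that a pure subgroup $H$ moves $\gamma$ precisely when $\gamma$ fails to satisfy the defining conditions of $\sigma(H)$, or when $\gamma$ crosses a domain on which some element acts nontrivially. I would first dispatch the reverse implication: if $\gamma$ projects to some domain $D$ of $H$, then by Lemma~\ref{threecases}(i), $\gamma$ essentially intersects $D$. If $D$ is a component domain, some $g \in H$ restricts to a pseudo-Anosov on $D$ (by the definition of included components), and a pseudo-Anosov moves every curve crossing its support, so $g$ cannot fix $\gamma$. If $D$ is an annular domain around a curve $\delta \in \sigma(H)$, then some $g \in H$ restricts to a nontrivial power of the Dehn twist $T_\delta$, and since $\gamma$ crosses $\delta$, $T_\delta^k$ moves $\gamma$. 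For the forward implication, suppose $\gamma$ projects to no domain of $H$; I would argue that $\gamma$ is then disjoint (up to isotopy) from $\A{H}$, so $\gamma$ lies in the complement $S \frgt \sigma(H)$ together with the non-included components. On each such component $G_Q$ is trivial, and $\gamma$ is disjoint from all of $\sigma(H)$ and from every active domain, so no element of $H$ can move it. The technical content here is verifying that ``projects to no domain'' is equivalent to ``isotopic into the complement of $\A{H}$ and disjoint from $\sigma(H)$,'' which follows from Lemma~\ref{threecases}(i) applied domain-by-domain, using that the domains of $\A{H}$ are disjoint.

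Part (b) then follows formally from part (a) once I establish that the active subsurface is determined by the fix/move partition of curves. The forward direction is immediate: if $\A{H} = \A{G}$, then $H$ and $G$ have the same domains, so by (a) they move exactly the same curves, hence fix exactly the same curves. For the converse, suppose $H$ and $G$ fix and move the same curves. By (a), a curve $\gamma$ projects to some domain of $H$ if and only if it projects to some domain of $G$. I would upgrade this pointwise agreement to equality of the cut-coded subsurfaces: the boundary multicurve $\sigma(H)$ consists of the curves fixed by $H$ all of whose crossing curves are moved, a condition phrased purely in fix/move language, so $\sigma(H) = \sigma(G)$. It remains to match the included-versus-excluded partition of the components of the common complement; here a component $Q$ is included for $H$ iff some curve contained in $Q$ is moved by $H$ (since $G_Q$ contains a pseudo-Anosov iff it moves some curve supported in $Q$), and the same for $G$, so the partitions coincide and the cut-coded data agree.

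The main obstacle I anticipate is the forward implication of (a): showing that a curve projecting to no domain of $H$ is genuinely fixed by \emph{every} element of $H$, not merely disjoint from the active support. The subtlety is that such a $\gamma$ might still intersect curves of $\sigma(H)$ that bound no annular domain, or might interact with the pseudo-Anosov pieces in a way not visible to a single projection. I would handle this by invoking that $\A{H}$ is cut-coded with multicurve $\sigma(H)$, so every curve of $\sigma(H)$ is either $\del$ of a component domain or the core of an annular domain; if $\gamma$ crossed any curve of $\sigma(H)$ it would project to the corresponding domain, contradicting the hypothesis. Thus $\gamma$ is disjoint from $\sigma(H)$, lands in a single complementary component $Q$, and since that $Q$ is excluded (else $\gamma$ would project to a component domain), $G_Q$ is trivial and $\gamma$ is fixed. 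This reduces the obstacle to careful bookkeeping with Lemma~\ref{threecases}, which I expect to be routine but is where the argument must be made airtight.
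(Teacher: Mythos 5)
Your proof is correct and is precisely the unwinding of definitions that the paper intends: the paper states this lemma without proof, asserting only that it follows ``directly from definitions.'' Your case analysis (component versus annular domains for the reverse direction of (a), the observation that a curve crossing any $\delta \in \sigma(H)$ must project to either a component domain bounded by $\delta$ or the annular domain around it, and the fact that the fix/move partition determines both $\sigma(H)$ and the included/excluded components for (b)) supplies exactly the intended details; the only loose phrase is ``a pseudo-Anosov moves every curve crossing its support,'' which is justified by applying Corollary~\ref{strans} to powers, since a fixed curve would have bounded orbit projection to the domain.
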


\begin{lmm}\label{nest1}If $H < G$ then $\A{H} \subset \A{G}$.\end{lmm}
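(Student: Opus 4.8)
The plan is to reduce to the pure case and then show directly that each domain of $H$ nests in a domain of $G$, leveraging the fact that $H<G$ forces every $H$-moved curve to be $G$-moved. First I would replace $H$ and $G$ by the pure finite-index subgroups $H\cap\Gamma(S)$ and $G\cap\Gamma(S)$: since $H<G$ we still have $H\cap\Gamma(S)<G\cap\Gamma(S)$, and by definition these compute the same $\sigma$ and $\A{\cdot}$, so I may assume $H$ and $G$ are pure (hence fix, rather than merely permute, the curves of $\sigma(H)$ and $\sigma(G)$ and the components they cut out). By the definition of the partial order on cut-coded subsurfaces, it then suffices to fix one domain $A$ of $H$ and produce a domain $B$ of $G$ with $A\subset B$. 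The structural input I will use repeatedly is that every curve moved by $H$ is moved by $G$ (an element of $H$ failing to fix a curve lies in $G$); equivalently $G$ fixes $\sigma(G)$ componentwise, and so does $H<G$.

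Next I would rule out overlap: no domain $B$ of $G$ overlaps $A$. Indeed, if they overlapped, then by Lemma~\ref{threecases}(ii) some component $\beta$ of $\del B$ would project to $A$; but $\del B\subset\sigma(G)$ is fixed by $G$, hence by $H$, so Lemma~\ref{fixmove}(a) says $\beta$ projects to no domain of $H$, in particular not to $A$, a contradiction. The same mechanism rules out a domain $B$ of $G$ lying strictly inside $A$: if $B\subsetneq A$, then since each component of $\del B\subset\sigma(G)$ is $H$-fixed, Lemma~\ref{fixmove}(a) gives $\pi_A(\del B)=\nada$, so Lemma~\ref{threecases}(i) forces $A$ to nest either in $B$ or in its complement; the former contradicts $B\subsetneq A$ by antisymmetry of $\subset$, and the latter contradicts $B\subset A$. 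Thus every domain of $G$ is either disjoint from $A$ or contains $A$.

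It then remains to exclude the possibility that every domain of $G$ is disjoint from $A$; this is the step I expect to be the main obstacle, and where the annular case needs care (a single curve can meet two disjoint subsurfaces, so merely exhibiting a moved curve does not suffice). Suppose $A$ is disjoint from $\A G$. Then $A$ isotopes into the union of excluded components of $S\frgt\sigma(G)$, on each of which $G_Q=\rho_Q(G)$ is trivial. If $A$ is a component domain, I choose a curve $c$ essential in $A$ (hence in $S$): it lies in such a $Q$, so the pure group $G$ fixes $c$, yet $c$ projects to the domain $A$ of $H$, so by Lemma~\ref{fixmove}(a) $H$ moves $c$, contradicting $H<G$. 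If $A$ is an annular domain with core $\gamma_0$, disjointness from every domain of $G$ forces $\gamma_0\notin\sigma(G)$ (otherwise the twisting element $h\in H<G$ about $\gamma_0$ would make an annulus at $\gamma_0$ a domain of $G$, not disjoint from $A$), so $\gamma_0$ lies in the interior of an excluded $Q$; but then $\rho_Q(h)$ is a nontrivial twist about $\gamma_0$ in $\Mod{Q}$, contradicting $G_Q=\{1\}$. Either way I reach a contradiction, so some domain $B$ of $G$ contains $A$; as $A$ was an arbitrary domain of $H$, this yields $\A{H}\subset\A{G}$.
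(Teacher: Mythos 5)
Your proof is correct and follows essentially the same route as the paper's: reduce to the pure case, use Lemma~\ref{fixmove}(a) and Lemma~\ref{threecases} to show each domain of $H$ either nests in a domain of $G$ or lies in the complement of all of them, then dispose of the latter alternative by splitting into the non-annular case (an essential curve fixed by $G$ yet moved by $H$) and the annular case. The only cosmetic difference is in the annular case, where the paper observes directly that the core curve must lie in $\sigma(G)$, so the annulus nests in $\A{G}$ after all, whereas you derive a contradiction from the triviality of $G_Q$ on an excluded component; the underlying facts invoked are the same.
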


\begin{proof}We may replace $H$ and $G$ by the pure subgroups obtained by intersecting with $\Gamma(S)$.  Suppose $A$ is a domain of $G$ and $B$ a domain of $H$.  Because $G$, and thus $H$, fixes $\del A$, $\pi_{B}(\del A)$ is empty.  Lemma~\ref{threecases} guarantees that $B$ nests in $A$ or its complement.  Now suppose $B$ is in the complement of every domain of $G$.  Then any curve essential in $B$ is fixed by $G$, thus by $H$.  This contradicts that $B$ is a domain of $H$, unless $B$ has no essential curves.  Thus $B$ is an annulus and $\del B$ consists of (two copies of) a single curve $\beta$.  Any $\gamma$ intersecting $\beta$ is moved by $H$, hence by $G$.  But because $G$ fixes $\gamma$, this means $\gamma \in \sigma(G)$.  Because $B$ is the annulus around $\gamma$, $B$ isotopes into $\A{G}$.  Thus every domain of $H$ nests in $\A{G}$, which means $\A{H} \subset \A{G}$.\end{proof}

\begin{figure}[h]\label{figactive}
\begin{center}
\includegraphics[width=5in]{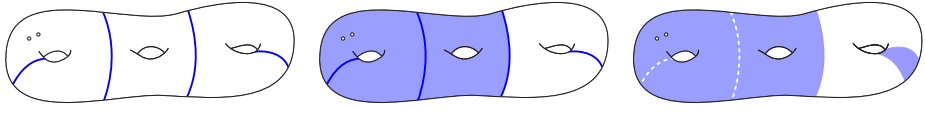}
\caption{\emph{How to define the active subsurface of a group $G$.}  (1) Cut along canonical reduction multicurve.  (2) Include components (shaded) on which $G$ induces an irreducible subgroup.  (3) Every element of $G$ has a power supported on included components and implied annuli, by Lemma~\ref{nest1}.  By Theorem~\ref{omnibus}, some element has exactly this support.}
\end{center}
\end{figure}

In general, let $\A{X_1,X_2, \dots}$ denote the active subsurface of the group generated by $X_1,X_2, \dots$, where $X_i$ may be either elements or subgroups of a mapping class group.

\begin{lmm}\label{nest2}Suppose $H$ and $G$ are pure elements or subgroups of $\Mod{S}$.  Then $\A{H} \subset \A{G}$ if and only if $\A{G} = \A{G,H}$.\end{lmm}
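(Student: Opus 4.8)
The statement is an ``if and only if,'' so I would prove the two implications separately, leaning on Lemma~\ref{nest1} and the characterization of active subsurfaces through the curves they fix and move (Lemma~\ref{fixmove}(b)). The reverse direction is the quick one: if $\A{G} = \A{G,H}$, then since $G < \grp{G,H}$ we have $\A{G} = \A{G,H}$ by hypothesis, and since $H < \grp{G,H}$ Lemma~\ref{nest1} gives $\A{H} \subset \A{G,H} = \A{G}$, so $\A{H} \subset \A{G}$ immediately.

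\textbf{The forward direction.} Suppose $\A{H} \subset \A{G}$; I want $\A{G} = \A{G,H}$. After replacing all three groups by their intersections with $\Gamma(S)$, everything in sight is pure, and by Lemma~\ref{fixmove}(b) it suffices to show that $\grp{G,H}$ fixes and moves exactly the same curves as $G$. One containment is free: since $G < \grp{G,H}$, Lemma~\ref{nest1} yields $\A{G} \subset \A{G,H}$, so every curve moved by $G$ is moved by $\grp{G,H}$. The content is the reverse: I must show $\grp{G,H}$ moves no curve that $G$ fixes. So take a curve $\gamma$ that $G$ fixes; I want to show $\grp{G,H}$ fixes $\gamma$, for which it is enough to show $H$ fixes $\gamma$ (since $\grp{G,H}$ is generated by $G$ and $H$, and a curve fixed by all generators is fixed by the group). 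By Lemma~\ref{fixmove}(a), $H$ moves $\gamma$ if and only if $\gamma$ projects to some domain of $H$; so I would argue by contradiction that $\gamma$ projects to no domain $B$ of $H$.

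\textbf{The key step.} Suppose for contradiction that $\gamma$ projects to some domain $B$ of $H$. Since $\A{H} \subset \A{G}$, the domain $B$ nests in some domain $A$ of $G$ (by the definition of $\subset$ for cut-coded subsurfaces). Because $\gamma$ projects to $B$ and $B$ nests in $A$, I expect that $\gamma$ also projects to $A$---this monotonicity of projection under nesting is the geometric heart of the argument, and I would verify it using the machinery of Lemma~\ref{threecases}: roughly, if $\gamma$ were disjoint from $A$ (or nested in its complement) it could not meet the smaller $B$ inside $A$. Once $\gamma$ projects to a domain $A$ of $G$, Lemma~\ref{fixmove}(a) says $G$ moves $\gamma$, contradicting that $G$ fixes $\gamma$. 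Hence $\gamma$ projects to no domain of $H$, so $H$ fixes $\gamma$, completing the proof. The main obstacle is precisely the nesting-monotonicity of subsurface projection---justifying that projection to a nested subsurface forces projection to the larger one---so I would be careful to state and invoke the relevant case of Lemma~\ref{threecases} rather than hand-wave it.
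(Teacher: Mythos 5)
Your proof is correct and follows essentially the same route as the paper: the reverse direction via Lemma~\ref{nest1} applied to $H < \grp{G,H}$, and the forward direction by showing $G$ and $\grp{G,H}$ fix and move the same curves and invoking Lemma~\ref{fixmove}(b). The only difference is that you expand the paper's one-line observation that $\A{H} \subset \A{G}$ forces $H$ to fix every curve $G$ fixes, correctly identifying the monotonicity of subsurface projection under nesting as the point needing justification.
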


\begin{proof}Observe that $\A{H} \subset \A{G}$ implies that any curve fixed by $G$ is fixed by $H$, and hence $G$ and $\grp{G,H}$ fix the same curves.  Apply Lemma~\ref{fixmove} to $G < \grp{G,H}$ for the forward direction, and Lemma~\ref{nest1} to $H < \grp{G,H}$ for the reverse.
\end{proof}

\begin{lmm}\label{nest} Let $G$ be a pure subgroup of $\Mod{S}$ generated by $\{H, H_1, \dots H_k, \dots\}$, where $H_k$ are groups or elements.  Either $\A{H} = \A{G}$ or $\A{H} \neq \A{H,H_k}$ for some $k$.\end{lmm}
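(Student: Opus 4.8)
The plan is to prove the contrapositive: assuming $\A{H} = \A{H, H_k}$ for every $k$, I would show $\A{H} = \A{G}$. By Lemma~\ref{nest2}, each hypothesis $\A{H} = \A{H, H_k}$ is equivalent to $\A{H_k} \subset \A{H}$, so I may assume every generator has active subsurface nested in $\A{H}$. Since $H < G$, Lemma~\ref{nest1} already gives $\A{H} \subset \A{G}$; to promote this to equality I would invoke Lemma~\ref{fixmove}(b), which reduces the goal to showing that $H$ and $G$ fix and move exactly the same curves. As $H < G$, every curve moved by $H$ is automatically moved by $G$ and every curve fixed by $G$ is automatically fixed by $H$, so the only thing left to prove is that every curve fixed by $H$ is also fixed by $G$.

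Fix such a $\gamma \in \V{S}$ with $H$ fixing $\gamma$. Because the stabilizer of $\gamma$ in $\Mod{S}$ is a subgroup, and $G$ is generated by $H$ together with the $H_k$, it suffices to check that each generator $H_k$ fixes $\gamma$; no induction is needed, and an infinite generating set causes no trouble, since any element of $G$ is a finite word in the generators. By Lemma~\ref{fixmove}(a), $H_k$ fixes $\gamma$ precisely when $\gamma$ projects to no domain of $H_k$, and likewise $H$ fixing $\gamma$ means $\gamma$ projects to no domain of $H$.

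So the heart of the matter is transferring this information across the nesting $\A{H_k} \subset \A{H}$. Let $B$ be any domain of $H_k$. By the definition of $\subset$ for cut-coded subsurfaces, $B$ nests in some domain $A$ of $H$. Since $H$ fixes $\gamma$, the curve $\gamma$ does not project to $A$, which by the definition of $\pi_A$ in Section~\ref{CCproj} means $\gamma$ may be isotoped entirely into the complement of $A$. As $B$ isotopes into $A$, the curve $\gamma$ is then disjoint from $B$, whence $\pi_B(\gamma)$ is empty. Thus $\gamma$ projects to no domain of $H_k$, and $H_k$ fixes $\gamma$, as needed.

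I expect the only delicate point to be this last projection bookkeeping: one must confirm that ``$\gamma$ does not project to $A$'' genuinely forces $\gamma$ into the complement of $A$ in all cases --- including when $A$ is an annular domain, where the criterion reads $i(\gamma, \del A) = 0$ --- and that nesting $B \subset A$ then precludes any essential intersection of $\gamma$ with $B$. Both facts are immediate from the definition of subsurface projection, but they are where the argument actually lives; everything else is formal manipulation of Lemmas~\ref{fixmove}, \ref{nest1}, and \ref{nest2}.
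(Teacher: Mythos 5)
Your proof is correct, and it reaches the same endgame as the paper's --- both arguments reduce, via Lemma~\ref{fixmove}(b), to showing that $H$ and $G$ fix and move the same curves, and both transfer ``empty projection'' across the nesting of active subsurfaces to conclude that every curve fixed by $H$ is fixed by $G$. The difference is in the middle. The paper sets $K_i = \grp{H, H_1, \dots, H_i}$, proves $\A{H} = \A{K_i}$ by induction using Lemma~\ref{nest2}, and then, since the $K_i$ exhaust $G$, applies Lemma~\ref{nest1} to get $\A{g} \subset \A{H}$ for every individual element $g \in G$ before running the fix/move argument. You bypass the induction and the exhaustion entirely by converting each hypothesis to $\A{H_k} \subset \A{H}$ via Lemma~\ref{nest2} and then observing that the stabilizer of a curve is a subgroup of $\Mod{S}$, so it suffices to check that each generator fixes $\gamma$. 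This is a genuine economy: it handles the possibly infinite generating set with no limiting argument, and it makes transparent that the only nontrivial content is the projection bookkeeping you flag at the end --- namely that $\pi_A(\gamma) = \nada$ for every domain $A$ of $H$ forces $\pi_B(\gamma) = \nada$ for every domain $B$ of $H_k$ nesting in such an $A$ (with the annular case checked via $i(\gamma, \del A) = 0$ and the fact that an annulus nesting in an annulus shares its core). That same transfer step is exactly what the paper leans on, both here and already in its proof of Lemma~\ref{nest2}, so you are on equal footing; your version simply packages it more directly.
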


\begin{proof}Suppose $\A{H} = \A{H,H_k}$ for all $k$.  Let $K_i = \grp{H, H_1, \dots H_i}$.  Using induction and Lemma~\ref{nest2} one can show $\A{H} = \A{K_i}$ for all $i$.  Because the $K_i$ exhaust $G$, Lemma~\ref{nest1} implies $\A{g} \subset \A{H}$ for all $g \in G$.  By Lemma~\ref{fixmove} one knows any curve fixed by $H$ has empty projection to all $\A{g}$, so $H$ and $G$ fix and move the same curves, and consequently $\A{H} = \A{G}$.\end{proof}

\subsection{Machinations in the curve complex}\label{CC}
This section collects several important curve complex results that provide the foundation for our proofs.  Together, these results link mapping class behavior with curve complex geometry.  Following our theme of interpreting $\Mod{S}$ via $\C{S}$, we note that these results also lead to a $\C{S}$-centric $\Mod{S}$ classification.

By definition, reducible mapping classes and finite-order mapping classes have bounded orbits in $\C{S}$.  That pseudo-Anosovs have infinite-diameter orbits is corollary to a theorem of Masur and Minsky:

\begin{thm}[Minimal translation of pseudo-Anosovs \cite{MM1}] \label{trans} There exists $c = c(S) > 0$ such that, for any pseudo-Anosov $g \in \Mod{S}$, vertex $\gamma \in \mathcal{C}_0(S)$, and nonzero integer $n$, \[d_{S}(g^n(\gamma),\gamma) \geq c|n| .\] \end{thm}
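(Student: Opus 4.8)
The plan is to reduce the statement to a uniform lower bound on the \emph{stable translation length} of $g$, and then extract that bound from the large-scale geometry relating $\C{S}$ to Teichm\"uller space. For a fixed pseudo-Anosov $g$ and vertex $\gamma$, set $a_n = d_S(g^n(\gamma),\gamma)$ for $n \geq 0$. Since $g$ acts by isometries, the triangle inequality gives $a_{n+m} \leq a_n + a_m$, so $(a_n)$ is subadditive; by Fekete's lemma $\lim_n a_n/n = \inf_{n\geq 1} a_n/n =: \tau(\gamma,g)$, and in particular $a_n \geq \tau(\gamma,g)\,n$ for every $n$. Because $|a_n^{\gamma} - a_n^{\gamma'}| \leq 2\,d_S(\gamma,\gamma')$, dividing by $n$ shows the limit is independent of the basepoint; call it $\tau(g)$. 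As $a_{-n}=a_n$, this already yields $d_S(g^n(\gamma),\gamma) \geq \tau(g)\,|n|$ for all $n$ with \emph{no additive error}. Thus it suffices to produce a single $c = c(S) > 0$ with $\tau(g) \geq c$ for every pseudo-Anosov $g$, and hyperbolicity is not even needed for this reduction.

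That each individual $\tau(g)$ is positive amounts to $g$ acting loxodromically on the $\delta$-hyperbolic complex $\C{S}$. I would obtain this from the dynamics of $g$: its stable and unstable measured foliations $\mathcal{F}^{\pm}$ fill $S$ and are uniquely ergodic, the orbit $n \mapsto g^n(\gamma)$ is a quasi-geodesic whose two ends land at $\mathcal{F}^{+}$ and $\mathcal{F}^{-}$, and such north--south dynamics on the boundary is the hallmark of a loxodromic isometry, forcing $\tau(g) > 0$. The real content of the theorem, however, is the \emph{uniformity}: a single $c(S)$ must work across the infinitely many conjugacy classes of pseudo-Anosovs, ruling out any family whose translation lengths decay to zero. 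This is where I expect the difficulty, and it is the step I would treat as the crux.

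For the uniformity I would pass to Teichm\"uller space $\mathcal{T}(S)$ and use the systole map $\Upsilon : \mathcal{T}(S) \to \C{S}$ sending a point to one of its shortest curves. This map is coarsely $\Mod{S}$-equivariant and coarsely Lipschitz, and the heart of Masur and Minsky's work is that it carries every Teichm\"uller geodesic to an unparameterized quasi-geodesic of $\C{S}$ \emph{with constants depending only on $S$}. A pseudo-Anosov $g$ preserves a Teichm\"uller axis on which it translates by $\log\lambda(g)$, and equivariance converts this into a lower bound $d_S(\Upsilon(x),\Upsilon(g^n x)) \geq c(S)\,|n|$ once one knows the quasi-geodesic image is traversed at a definite minimal rate; with the basepoint-independence above this gives $\tau(g) \geq c(S)$. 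The main obstacle is precisely the uniformity of these quasi-geodesic constants. It does not follow formally from $\delta$-hyperbolicity together with Lemma~\ref{projbound}: projections to proper subsurfaces stay bounded along an axis and so cannot by themselves detect linear growth, and there are only $\xi(S)$-many disjoint subsurfaces to exploit. The linear lower bound genuinely rests on the divergence and contraction estimates underlying the hyperbolicity of $\C{S}$, so I would establish this uniform quasi-geodesic statement as the key input and invoke it from \cite{MM1}.
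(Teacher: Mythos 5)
First, a point of comparison: the paper does not prove this statement at all --- it is quoted verbatim from Masur and Minsky (it is Proposition 4.6 of \cite{MM1}) and used as a black box, so there is no in-paper argument to measure your sketch against; what you have written is an attempted reconstruction of the Masur--Minsky proof. Your first paragraph is correct and genuinely clean: subadditivity of $a_n = d_S(g^n(\gamma),\gamma)$ plus Fekete's lemma reduces the theorem, with no additive error, to a uniform positive lower bound on the stable translation length $\tau(g)$, and the basepoint-independence of $\tau$ lets you compute it along any convenient orbit, e.g.\ one lying on a quasi-axis. That reduction is sound, and correctly isolates uniformity over all pseudo-Anosovs as the crux.

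The gap is in the final step, and it is not quite where you locate it. The statement you propose to import from \cite{MM1} --- that the systole map sends every Teichm\"uller geodesic to an \emph{unparameterized} quasi-geodesic of $\mathcal{C}(S)$ with constants depending only on $S$ --- is true and uniform, but it carries no information about the rate of traversal: an unparameterized quasi-geodesic may sit essentially still for arbitrarily long stretches, and $\Upsilon$ does exactly that on segments of the axis that dwell in the thin part of $\mathcal{T}(S)$. Nor does the Teichm\"uller translation length $\log\lambda(g)$ help, even granted the uniform lower bound on dilatations: curve-complex displacement is not bounded below by any function of Teichm\"uller displacement. So the clause ``once one knows the quasi-geodesic image is traversed at a definite minimal rate'' is not a technical verification you can defer --- it \emph{is} the theorem, restated on the axis, and citing it from \cite{MM1} is circular since that is exactly their Proposition 4.6. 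To close the argument you would need a genuinely new input here: for instance, a uniform $N = N(S)$ such that $d_S(\Upsilon(x),\Upsilon(g^N x))$ exceeds the backtracking constant of the quasi-axis for every pseudo-Anosov $g$ and every $x$ on its axis (after which coarse monotonicity along the quasi-axis does yield linear growth), or the contraction and quasi-projection estimates that Masur--Minsky actually deploy, or, in more modern language, acylindricity of the action, which bounds $\tau(g)$ below uniformly for all loxodromic elements at once. As written, the proposal reduces the theorem correctly and then, at the decisive moment, invokes a statement that does not imply it.
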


This gives us a way to recognize pseudo-Anosovs.  It also suggests an alternate classification of elements on $\Mod{S}$, by whether they have finite, finite-diameter, or infinite-diameter orbits in $\C{S}$.  If one takes that classification as a starting point, Dehn twists clearly qualify as pseudo-Anosovs for the annulus mapping class group.

For pure mapping classes, one may even refine this classification.  Suppose $g \in \Mod{S}$ is pure and has a domain $Y$.  The observation that, for all $\gamma \in \V{S}$, $\pi_Y(g(\gamma)) = g(\pi_Y(\gamma))$, yields:

\begin{cor}[Minimal translation on subsurfaces] \label{strans} There exists $c = c(S) > 0$ such that, for any pure element $g \in \Mod{S}$ with domain $Y$, vertex $\gamma \in \mathcal{C}_0(S)$ such that $\pi_Y(\gamma) \neq \nada$, and nonzero integer $n$, \[d_{Y}(g^n(\gamma),\gamma) \geq c|n| .\] \end{cor}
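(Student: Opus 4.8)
The plan is to push the problem into the curve complex $\C{Y}$ and invoke Theorem~\ref{trans} there, exploiting that $g$ stabilizes its own domain $Y$ and that subsurface projection is natural. Since $Y$ is a domain of $g$, the class $g$ fixes $\del Y$ and restricts to a self-homeomorphism of $Y$; because $g$ is \emph{pure}, this restriction $g|_Y$ is genuinely a pseudo-Anosov when $Y$ is a component domain, and a nonzero power of a Dehn twist when $Y$ is an annular domain. In either case $g|_Y$ is an infinite-order ``pseudo-Anosov'' of $Y$ in the sense of Section~\ref{CC}, and the naturality recorded just before the statement, $\pi_Y(g(\gamma)) = g|_Y(\pi_Y(\gamma))$, iterates to $\pi_Y(g^n(\gamma)) = (g|_Y)^n(\pi_Y(\gamma))$.

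First I would fix any vertex $v \in \pi_Y(\gamma)$, which exists because $\pi_Y(\gamma) \neq \nada$ by hypothesis. Then $v \in \pi_Y(\gamma)$ and $(g|_Y)^n(v) \in (g|_Y)^n(\pi_Y(\gamma)) = \pi_Y(g^n(\gamma))$, so both points lie in the set $\pi_Y(\gamma) \cup \pi_Y(g^n(\gamma))$ whose diameter in $\C{Y}$ is by definition $\dis{Y}{g^n(\gamma),\gamma}$. Hence the diameter is at least the $\C{Y}$-distance between these two points, giving
\[ \dis{Y}{g^n(\gamma),\gamma} \;\geq\; \mathrm{d}_{\C{Y}}\big((g|_Y)^n(v),\, v\big). \]
Applying Theorem~\ref{trans} to the surface $Y$ and its pseudo-Anosov $g|_Y$ then bounds the right-hand side below by $c(Y)\,|n|$ for some $c(Y) > 0$.

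The only point requiring real care is the \emph{uniformity} of the constant. Here I would use that $S$ has only finitely many homeomorphism types of essential non-pants subsurface $Y$, so Theorem~\ref{trans} supplies finitely many constants $c(Y)$, each depending only on the topological type; for the annular $Y$ the analogous linear lower bound is the elementary fact that $\C{Y}$ is quasi-isometric to $\R$ with $\Mod{Y}$ acting by translation. Setting $c = c(S)$ to be the minimum of these finitely many constants yields the claim. The subtle ingredient feeding this argument, and the place where the hypothesis is genuinely used, is that purity forces $g|_Y$ itself (rather than merely some power of $g$) to be pseudo-Anosov on $Y$, so that no $n$-dependent loss enters when transferring Theorem~\ref{trans} from $\C{Y}$ back to the estimate above.
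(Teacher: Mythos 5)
Your proposal is correct and follows essentially the same route as the paper, which deduces the corollary from Theorem~\ref{trans} via the naturality observation $\pi_Y(g(\gamma)) = g(\pi_Y(\gamma))$ applied to the pseudo-Anosov (or Dehn-twist, in the annular case) restriction $g|_Y$. Your added care about uniformizing the constant over the finitely many topological types of domains is exactly the implicit content of the paper's one-line derivation.
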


Thus every nontrivial pure element $g$ has infinite-diameter orbits in the curve complexes of its domains.  Moreover, an orbit of $g$ in the curve complex of one of its domains projects to a bounded diameter subset of the curve complex of any subsurface properly nested in that domain.  This is because, if some $g$-orbit projects to a domain, most of its curves approximate the limiting laminations of $g$ which fill that domain.  Projection to the properly nested subsurface will not distinguish between the approximating curves, so the orbit looks roughly constant at large-magnitude powers of $g$.  This is the heuristic behind another theorem of Masur and Minsky, which along with Corollary \ref{strans} is crucial to our core construction in Section \ref{blueprint}:

\begin{thm}[Bounded geodesic image \cite{MM2}]\label{bgi} Let $Y$ be a proper domain of $S$.  Let $\mathcal{G}$ be a geodesic in $\C{S}$ whose vertices each project to $Y$.  Then there is a constant $M = M(S)$ such that \[\pdiam{Y}{\mathcal{G}} \leq M.\] \end{thm}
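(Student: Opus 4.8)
The plan is to combine two ingredients already in hand: the coarse Lipschitz behavior of subsurface projection recorded in Lemma~\ref{projbound}, and the $\delta$-hyperbolicity of $\C{S}$. First I would carry out the Lipschitz step. Write the geodesic as $\mathcal{G} = v_0, v_1, \dots, v_k$. Consecutive vertices $v_i, v_{i+1}$ are disjoint curves, both of which project to $Y$ by hypothesis, so by Lemma~\ref{projbound} and the ensuing remark that disjoint multicurves have projection distance at most two, $\dis{Y}{v_i, v_{i+1}} \leq 2$. Hence $\pi_Y(\mathcal{G})$ is a coarsely connected string of points in $\C{Y}$, and for a subsegment $v_i, \dots, v_j$ all of whose vertices meet $Y$ we get $\dis{Y}{v_i, v_j} \leq 2(j-i)$. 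This bound is by itself useless, since $\mathcal{G}$ may be arbitrarily long; the real content will be that the projected string must backtrack.

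Next I would isolate the bounded obstacle. Let $P$ be the set of curves that do not project to $Y$, that is, those disjoint from $Y$: each such curve lies in the complement of $Y$ or is parallel to a component of $\del Y$, hence is disjoint from all of $\del Y$ and so lies within distance one of a fixed boundary component $\beta_0$. Thus $P$ has diameter at most two in $\C{S}$, and by hypothesis $\mathcal{G}$ is disjoint from $P$. The theorem then reduces to the following \emph{key assertion}: there is a constant $b = b(S)$ such that whenever two curves $x, y$ meet $Y$ and some $\C{S}$-geodesic from $x$ to $y$ avoids $P$, one has $\dis{Y}{x,y} \leq b$. Granting this, I apply it to every pair $v_i, v_j$ of vertices of $\mathcal{G}$: the subsegment of $\mathcal{G}$ between them is a geodesic avoiding $P$, so $\dis{Y}{v_i, v_j} \leq b$, whence $\pdiam{Y}{\mathcal{G}} \leq b$ and we may take $M = b$.

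The main obstacle is the key assertion, and it is exactly where $\delta$-hyperbolicity must enter. The intuition is that to alter the $Y$-projection substantially a curve must be dragged across $Y$, and in a hyperbolic space the cheapest route passes near the obstacle $\del Y$; a geodesic, being the cheapest route, cannot both realize a large projection change and stay away from $\del Y$. To make this precise I would argue contrapositively: assuming $\dis{Y}{x,y}$ exceeds a threshold depending on $\delta$ and the Lipschitz constant, I would use the contraction property of $\C{S}$ — thin triangles with one vertex at the component $\beta_0$ of $\del Y$ — to force any geodesic from $x$ to $y$ to enter the radius-one neighborhood of $\del Y$, hence to contain a vertex of $P$, contradicting that it avoids $P$. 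Quantifying this contraction, and in particular extracting a numerical bound on $\dis{Y}{x,y}$ from the hyperbolicity constant rather than a merely qualitative statement, is the delicate heart of the argument. A final point requiring separate care is the annular case $\xi(Y) = -1$: there $\pi_Y$ is defined via geodesic lifts to the annular cover and $\C{Y}$ is quasi-isometric to $\R$, so both the Lipschitz step and the hyperbolic contraction must be re-examined in that model, though the logical skeleton is identical.
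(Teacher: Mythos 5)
This is a theorem the paper imports from \cite{MM2} without proof, so there is no internal argument to compare against; the question is whether your sketch would stand on its own, and it would not. The genuine gap is your ``key assertion'': that if a $\C{S}$-geodesic between $x$ and $y$ avoids the $1$-neighborhood $P$ of $\del Y$, then $\dis{Y}{x,y}$ is uniformly bounded. This assertion is not a reduction of the theorem --- it \emph{is} the theorem, restated --- and the mechanism you propose for it (thin triangles with a vertex at $\beta_0$ forcing the geodesic into $P$ when the projection distance is large) does not follow from $\delta$-hyperbolicity of $\C{S}$ together with Lemma~\ref{projbound}. Hyperbolicity gives contraction for \emph{nearest-point} projection onto quasiconvex subsets of $\C{S}$; the subsurface projection $\pi_Y$ is a combinatorially defined map into a different complex $\C{Y}$, and the statement that a large change in $\pi_Y$ forces a path in $\C{S}$ to pass near $\del Y$ is exactly the quantitative link between the two geometries that must be established by a genuine argument about curves and arcs on the surface (in \cite{MM2} this is done with hyperbolicity plus a nesting/contraction argument specific to $\pi_Y$; Webb later gave an elementary proof with explicit constants that uses no hyperbolicity at all, which underlines that the content is not an abstract coarse-geometry fact). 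As written, the ``delicate heart'' you defer is the entire proof.

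Two smaller points. First, your Lipschitz step quotes Lemma~\ref{projbound} for consecutive vertices of a geodesic, but when $\xi(S)=1$ adjacent vertices are \emph{not} disjoint (they intersect once or twice), and the paper explicitly restricts that lemma to multicurves for this reason; you need the separate bounded-projection estimate for curves with bounded intersection number in the Farey case. Second, your description of $P$ and the annular case is fine in outline, but note that the constant $M$ must be uniform over all proper domains $Y$ of $S$ simultaneously (the paper uses $M=M(S)$, not $M(S,Y)$), so any argument via hyperbolicity has to produce a bound independent of the topological type of $Y$, including annuli; your sketch does not address this uniformity.
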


This theorem enabled Behrstock to obtain Lemma~\ref{behrstock} below, which Chapter \ref{recipe} frequently employs.  Here we give the elementary proof, with constructive constants, by Chris Leininger.  Most of it appeared previously in \cite{Ma}; this version adds the possibility of annular domains.

\begin{lmm} [Behrstock \cite{Be}] \label{behrstock}For any pair of overlapping domains $Y$ and $Z$ and any multicurve $x$ projecting to both,
\[ \dis{Y}{x,\del Z} \geq 10 \sez \dis{Z}{x,\del Y} \leq 4 \]
\end{lmm}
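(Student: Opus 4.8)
The plan is to reduce to a single curve and then to an entirely local statement about how that curve crosses the two subsurfaces, since the Bounded Geodesic Image theorem (Theorem~\ref{bgi}) only governs \emph{nested} domains and says nothing directly about the overlapping pair $Y,Z$. First I would record that overlapping forces every projection in the statement to be defined: by Lemma~\ref{threecases}(ii), $\del Y$ projects to $Z$ and $\del Z$ projects to $Y$, so $\dis{Y}{x,\del Z}$ and $\dis{Z}{x,\del Y}$ both make sense. I would argue the contrapositive, assuming $\dis{Y}{x,\del Z}$ is large and producing a uniform bound on $\dis{Z}{x,\del Y}$.

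Next I would pass from the multicurve $x$ to a single component. Choose a component $\gamma$ of $x$ with $\pi_{Z}(\gamma)\neq\nada$. If $\gamma$ fails to project to $Y$, then $\gamma$ is disjoint from $\del Y$, so $\gamma\cup\del Y$ is a multicurve and Lemma~\ref{projbound} gives $\dis{Z}{\gamma,\del Y}\leq 2$, whence $\dis{Z}{x,\del Y}\leq 4$ and we are done. Otherwise $\gamma$ projects to both $Y$ and $Z$; since $\pi_{Y}(\gamma)\subset\pi_{Y}(x)$ and $\pi_{Z}(\gamma)\subset\pi_{Z}(x)$, Lemma~\ref{projbound} shows $\gamma$ lies within distance $2$ of $x$ in each of $\C{Y}$ and $\C{Z}$. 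Thus it suffices to prove the inequality for the single curve $\gamma$, with the constants shifted by these additive $2$'s.

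The heart of the argument is the local claim that if some arc of $\gamma\cap Z$ can be isotoped off $\del Y$, then $\dis{Z}{\gamma,\del Y}$ is uniformly bounded. Indeed, such an arc together with an arc of $\del Y\cap Z$ (which exists because $\del Y$ projects to $Z$) forms a disjoint pair inside $Z$, and surgering each along $\del Z$ places a vertex of $\pi_{Z}(\gamma)$ within one step of a vertex of $\pi_{Z}(\del Y)$; combined with the diameter-$2$ bound of Lemma~\ref{projbound} on each projection set, this caps $\dis{Z}{\gamma,\del Y}$. So I am reduced to the ``dangerous case'' in which \emph{every} arc of $\gamma\cap Z$ crosses $\del Y$. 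Symmetrically, the hypothesis that $\dis{Y}{\gamma,\del Z}$ is large forces that no arc of $\gamma\cap Y$ misses $\del Z$, as otherwise the same disjoint-arc estimate would bound $\dis{Y}{\gamma,\del Z}$. Examining the arcs of $\gamma$ inside the overlap $W=Y\cap Z$, one sees that an arc of $\gamma\cap W$ with both endpoints on $\del Z$ would be a full arc of $\gamma\cap Z$ missing $\del Y$, while one with both endpoints on $\del Y$ would be a full arc of $\gamma\cap Y$ missing $\del Z$; both are excluded, so every arc of $\gamma\cap W$ must join $\del Y$ to $\del Z$.

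The main obstacle is to convert this last, purely combinatorial ``mixed-endpoint'' configuration into a genuine contradiction with the \emph{quantitative} hypothesis $\dis{Y}{\gamma,\del Z}\geq 10$: mere crossing yields only a constant lower bound on distance, whereas distance at least $10$ is a far stronger ``wrapping'' condition, so one must feed the magnitude back in---tracking how the forced crossings of $\del Y$ inside $Z$ organize the arcs of $\gamma\cap Y$ relative to $\del Z$---in order to bound $\dis{Y}{\gamma,\del Z}$ and contradict the assumption. Carrying the explicit constants $10$ and $4$ through these applications of Lemma~\ref{projbound}, and verifying the arc surgeries verbatim in the annular case (where $\C{Z}$ is replaced by the arc complex of the annular cover and ``arcs of $\gamma\cap Z$'' become lifts joining the two boundary circles), is precisely the extra bookkeeping this version of the proof must supply.
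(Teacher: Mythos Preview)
Your reduction to the ``dangerous case'' is sound, but the proof stops exactly where the content lies: you explicitly flag as an unresolved obstacle the step of feeding the magnitude of $\dis{Y}{\gamma,\del Z}\geq 10$ back into the configuration. Without that step you have shown only that crossings of $\gamma$ with $\del Y$ and $\del Z$ alternate, which by itself bounds nothing---$\gamma$ may cross both boundaries arbitrarily many times and your arc-disjointness criterion never triggers. Calling the remainder ``bookkeeping'' understates the issue; it is the whole argument.

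The paper's proof runs in the opposite direction and avoids the dangerous case altogether. Rather than assuming no disjoint arc exists and seeking a contradiction, it uses the hypothesis $\dis{Y}{x,\del Z}\geq 10$ \emph{constructively}: via the elementary inequalities $i(u,v)\geq 2^{(d-2)/2}$ and $i(u,v)\leq 2+4\,i(a_u,a_v)$, the distance bound forces some arc $a_u$ of $x\cap Y$ to be crossed at least three times by a single arc $a_v$ of $\del Z\cap Y$. Three consecutive crossings of $\del Z$ along $a_u$ guarantee that one of the two intermediate subarcs of $a_u$ lies inside $Z$, and that subarc is disjoint from $\del Y$ since $a_u\subset Y$ has interior off $\del Y$. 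This immediately gives $\dis{Z}{x,\del Y}\leq 4$ by your own disjoint-arc observation. The annular modifications use lifts to the annular cover and the fact that geodesics in $\mathbb{H}^2$ meet at most once to recover the same ``three crossings'' picture. The missing idea in your outline is precisely this intersection-number calculus translating large projection distance into a forced triple crossing.
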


\begin{proof} First we gather the facts that prove the lemma when neither $Y$ nor $Z$ is an annulus.  Suppose $S'$ is a subsurface of $S$ and $\xi(S),\xi(S') \geq 1$.  Let $u_0$ and $v_0$ be curves on $S$ which minimally intersect $S'$ in sets of arcs.  Suppose $a_u$ is one these arcs for $u_0$, and $u$ a component of the boundary of a neighborhood of $a_u \cup \del S'$; suppose $a_v$ and $v$ play the same role for $v_0$.  Then $u \in \pi_{S'}(u_0)$ and $v \in \pi_{S'}(v_0)$.  Define intersection number of arcs to be minimal over isotopy fixing the boundary setwise but not necessarily pointwise.  One has:

\begin{itemize}
\item[(1)] If $i(a_u,a_v) = 0$, then $\dis{S'}{u_0,v_0} \leq 4$
\item[(2)] If $i(u,v) > 0$, then $i(u,v) \geq 2^{(\dis{S'}{u,v} -2)/2}$
\item[(3)] $i(u,v) \leq 2 + 4 \cdot i(a_u,a_v)$
\end{itemize}

Statement (1) follows from the proof of Lemma~\ref{projbound} (Lemma 2.3 in \cite{MM2}).  Straightforward induction proves (2), which Hempel records as Lemma 2.1 in \cite{He}.  Fact (3) is the observation that essential curves from the regular neighborhoods of $a_u \cup \del S'$ and $a_v \cup \del S'$ intersect at most four times near every intersection of $a_u$ and $a_v$, plus at most two more times near $\del S'$.

Now assume $\xi(Y),\xi(Z) \geq 1$.  Because $\dis{Y}{x,\del Z} \geq 10 > 2$, the diameter is realized by curves $u \in \pi_{Y}(x), v \in \pi_{Y}(\del Z)$ such that, by (2), $i(u,v) \geq 2^4 = 16$.  By the definition of $\pi_Y$, these $u$ and $v$ come from arcs $a_u \subset x \cap Y$ and $a_v \subset \del Z \cap Y$ respectively.  By (3), $i(a_u,a_v) \geq (16-2)/4 > 3$.  Thus $a_u$ is an arc of $x$ intersected thrice by an arc $a_v$ of $\del Z$, within the subsurface $Y$.  Observe that one of the segments of $a_u$ between points of intersection must lie within $Z$.  This segment is an arc $a_x$ of $x$ disjoint from arcs of $\del Y$ in $Z$.  Fact (1) implies $\dis{Z}{x,\del Y} \leq 4$.

\begin{figure}[h]
\begin{center}
\includegraphics{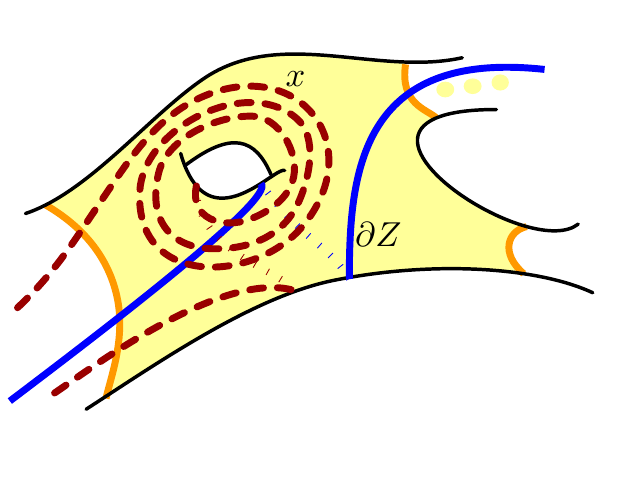}
\caption{\emph{Key point of Behrstock's lemma.}  Because $x$ and $\del Z$ have large projection distance in $Y$, one can find an arc of $x$ intersecting an arc of $\del Z$ three times.  Extra-fine dashed lines represent where curves run behind the surface.}
\end{center}
\end{figure}

The main idea of this proof works for annular domains after a few more relevant facts.   Endow $S$ with a hyperbolic metric and let $A$ be an embedded annulus with geodesic core curve $\alpha$; let $\tilda{A}$ be the corresponding annular cover of $S$.  Let $u$ and $v$ be geodesic curves in $S$ with lifts $\tilda{u}$ and $\tilda{v}$ traversing the core curve of $\tilda{A}$.  We have already mentioned
\begin{itemize}
\item[(4)] $\dis{A}{u,v} = i(\tilda{u},\tilda{v}) + 1$.
\end{itemize}
Let $\tilda{\alpha}$ be the unique lift of $\alpha$ corresponding to the core curve of $\tilda{A}$, and let $\tilda{\alpha}_1$ and $\tilda{\alpha}_2$ be the first lifts of $\alpha$ intersecting $\tilda{u}$ on each side.  On the sides of $\tilda{\alpha}_i$ opposite $\tilda{\alpha}$, $\tilda{A}$ is isometric to its pre-image in the universal cover $\tilda{S} = \mathbb{H}^2$.  Because geodesics intersect only once in $\mathbb{H}^2$,
\begin{itemize}
\item[(5)] at most two of the intersections of $\tilda{u}$ and $\tilda{v}$ occur outside the open segment of $\tilda{u}$ between the $\tilda{\alpha}_i$.
\end{itemize}

Now we can retrace the proof above, augmenting it to address the possibility that $Y$ or $Z$ is an annulus.  Under some hyperbolic metric, $x, \del Y,$ and $\del Z$ have geodesic representatives.  Suppose $Y$ is an annulus with geodesic core curve $y$ and $\tilda{Y}$ is the corresponding annular cover of $S$.  If $\dis{Y}{x,\del Z} \geq 10$, (4) implies that some lifts $\tilda{x}$ and $\tilda{\del Z}$ in $\tilda{Y}$ intersect at least nine times.  By (5), at least three of these intersections occur on an open segment of $\tilda{x}$ between consecutive lifts of $y$, and a neighborhood of this segment embeds in $S$.  As before, one finds an arc of $x$ intersecting $\del Z$ at least three times in a neighborhood disjoint from $\del Y$.  If $Z$ is not an annulus, one repeats the conclusion of the first argument.

\begin{figure}[h]
\begin{center}
\includegraphics{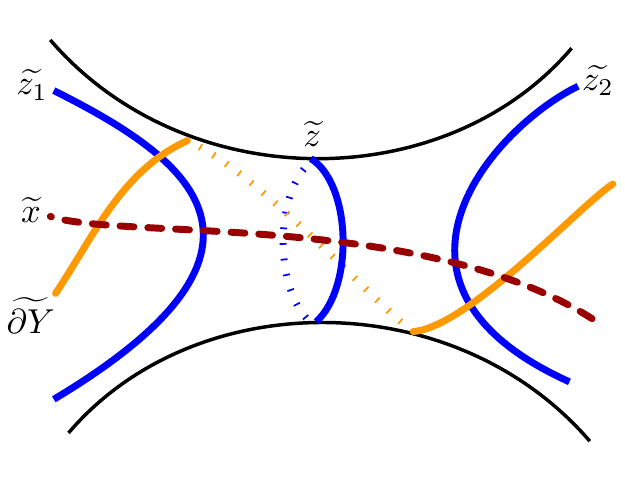}
\caption{\emph{Behrstock's lemma for the annulus.}  If $x$ and $z$ intersect three times with no intersections of $\del Y$ in between, then in the annular cover corresponding to $z$, $\tilda{x}$ can intersect $\tilda{\del Y}$ no more than twice.  Extra-fine dashed lines represent parts of curves on the far side of the annulus.}
\end{center}
\end{figure}

On the other hand, suppose that $Z$ is an annulus with geodesic core curve $z$, and $Y$ is any domain.  The arguments thus far tell us one has an arc of $x$ thrice intersecting $z$ in a neighborhood of that arc disjoint from $\del Y$.  In the annular cover corresponding to $Z$, the three intersections correspond to a lift $\tilda{x}$ of $x$ intersecting the closed lift $\tilda{z}$ of $z$ and adjacent lifts $\tilda{z}_1$, $\tilda{z}_2$ on each side.  Any lift of $\del Y$ cannot intersect $\tilda{x}$ between these intersections, so (5) implies $i(\tilda{x},\tilda{\del Y}) \leq 2$.  Fact (4) implies $\dis{Z}{x,\del y} \leq 3$.
\end{proof}

\noindent \textbf{Remark.}  Behrstock's lemma implies that $\C{S}$-orbits of a mapping class $g$ have bounded projection to subsurfaces that overlap with domains of $g$.  Using stronger results, Theorem~\ref{bgi} in particular, one can prove that the $g$-orbit of any $\gamma \in \C{S}$ projects to an unbounded set in the curve complex of $S'$ if and only if the orbit projects to $S'$ and $S'$ is a domain of $g$.  Thus one obtains a refined, curve-complex-based classification of elements of $\Mod{S}$, by letting the phrase ``$g$ is pseudo-Anosov on $S'$'' mean that the $g$-orbit of some $\gamma \in \C{S}$ has infinite-diameter projection to $\C{S'}$.

\section{The recipe}\label{recipe}
We begin at the end, proving the Main Theorem, modulo two propositions, in Section \ref{prf}.  There, we reduce the proof to the problem of generating a short-word full-support mapping class when the generating set has only two pure elements; even then, we must simultaneously accommodate multiple scenarios happening on different, non-interacting subsurfaces.  The actual construction of pseudo-Anosovs is left to Section \ref{blueprint}, where in fact we identify all pseudo-Anosov elements in any group generated by two ``sufficiently different'' pure reducible mapping classes.  That is, we prove Proposition~\ref{coreconstruction} of the introduction, which gives more than is needed for the main theorems.  In general one is not so lucky as to start out with the condition on generators required for Proposition~\ref{coreconstruction} to apply.  Thus we also need a recipe for writing a sufficiently different pair of mapping classes, given an arbitrary pair of pure reducible mapping classes generating an irreducible subgroup.  Proposition~\ref{conj} fills that need; its proof occupies Section \ref{gencon}.

\subsection{Writing the short word}\label{prf}

Restated in the terminology introduced in Section \ref{pure}, our goal is to prove the following:

\begin{thm}[Main Theorem]\label{srpa}There exists a constant $K = K(S)$ with the property that, for any subset $\Sigma \subset \Mod{S}$, there exists $f \in \grp{\Sigma}$ with $\Sigma$-length less than $K$, such that $\A{f} = \A{\Sigma}$.\end{thm}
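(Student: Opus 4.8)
The plan is to reduce the theorem to a two-generator problem and to solve that problem one domain at a time, exploiting that a word in two letters restricts on each domain to the same word in the restricted letters.

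First I would pass to pure generators. Since $\Gamma(S)$ has index $D = D(S)$ in $\Mod{S}$, the subgroup $G' = \grp{\Sigma} \cap \Gamma(S)$ is a pure finite-index subgroup, and by the very definition of the active subsurface of a non-pure group we have $\A{G'} = \A{\grp{\Sigma}} = \A{\Sigma}$. The Reidemeister--Schreier process produces a generating set for $G'$ consisting of words of length at most $2D-1$ in $\Sigma$. Hence it suffices to find a full-support element of bounded length in this new pure generating set; bounding its $\Sigma$-length then costs only the uniform factor $2D-1$. So from now on I assume $\Sigma$ is a finite set of \emph{pure} mapping classes.

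Next I would run an inductive merge driven by Lemma~\ref{nest}. Build a single element $f$ whose active subsurface climbs toward $\A{\Sigma}$: initialize $f$ at one generator, and if $\A{f} = \A{\Sigma}$ stop. Otherwise Lemma~\ref{nest} supplies a generator $s \in \Sigma$ with $\A{f} \neq \A{f,s}$, and then Lemma~\ref{nest1} makes the inclusion $\A{f} \subset \A{f,s}$ strict. The core step below replaces $f$ by a bounded word $f'$ in $f$ and $s$ with $\A{f'} = \A{f,s}$, strictly enlarging the active subsurface. By Lemma~\ref{nestseq} a strictly increasing chain of cut-coded subsurfaces has length at most $2\xi$, so after at most $2\xi$ merges we reach $\A{f} = \A{\Sigma}$; since each merge multiplies the word length by a bounded factor, the final $f$ has length at most that factor raised to the $2\xi$, a constant $K(S)$. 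Note that if ever $f$ or a generator is pseudo-Anosov then $\A{f} = S = \A{\Sigma}$ and we are done immediately, so in the merge step both $f$ and $s$ may be taken reducible.

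The heart is the two-generator step: given pure reducible $a := f$ and $b := s$ with $W := \A{a,b}$, produce a bounded word $w$ in $a,b$ with $\A{w} = W$. Because $w \in \grp{a,b}$, Lemma~\ref{nest1} already gives $\A{w} \subset W$, so it suffices to make every domain $Y$ of $W$ a domain of $w$, i.e.\ to make each restriction full-support on $Y$. The crucial feature is that $\rho_Y$ is a homomorphism, so $\rho_Y(w) = w(\rho_Y(a),\rho_Y(b))$: one word in two letters restricts on each domain to the same word in the restricted letters. I would use Proposition~\ref{conj} to replace $(a,b)$ by a sufficiently different pair $(a',b')$ of bounded words whose pseudo-Anosov subsurfaces $A,B$ fill $W$. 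On a domain $Y$ where $A \cap Y$ and $B \cap Y$ are both proper, the restricted pair is sufficiently different in $\Mod{Y}$, so Proposition~\ref{coreconstruction} applied with ambient surface $Y$ makes $\rho_Y({a'}^{Q}{b'}^{Q})$ pseudo-Anosov on $Y$; taking $Q = Q(S)$ to be the maximum of the core-construction constants over the finitely many topological types of domain keeps the powers bounded. On a domain where one restricted generator is trivial, the word restricts to a power of the other and is again full-support. Thus $w = {a'}^{Q}{b'}^{Q}$ should realize $\A{w} = W$.

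The main obstacle is the simultaneity anticipated in the overview of this section: arranging, with a single bounded word and without disturbing already-handled domains, that on every domain of $W$ at once either the pair restricts to a sufficiently different pair or one generator restricts to a full-support element while the other is inert. The delicate case is a domain $Y$ on which one generator, say $a$, is already pseudo-Anosov on all of $Y$ while the other still moves curves in $Y$: no power or conjugate of $a$ is pseudo-Anosov on a proper subsurface of $Y$, so sufficient difference on $Y$ is simply unattainable and one must instead guarantee directly that the product word stays pseudo-Anosov there, for which a sufficiently high power of the full-support generator, controlled via Theorem~\ref{trans}, Lemma~\ref{behrstock}, and Theorem~\ref{bgi}, keeps the orbit's projection to $\C{Y}$ of infinite diameter. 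Making the choice of $a',b'$ in Proposition~\ref{conj} compatible across all these non-interacting domains, so that the same bounded $w$ works everywhere, is where the real work lies.
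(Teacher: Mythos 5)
Your reduction steps match the paper's: pass to the pure finite-index subgroup $\grp{\Sigma}\cap\Gamma(S)$ with a bounded-length generating set, then run the inductive merge via Lemmas~\ref{nestseq}, \ref{nest1}, and \ref{nest} to reduce to two pure generators, and exploit that $\rho_Y$ is a homomorphism so a single word restricts on each domain to the same word in the restricted letters. But the core two-generator step is where the actual content of the proof lives, and your proposal leaves it with a genuine gap. Your candidate word $w = {a'}^{Q}{b'}^{Q}$ only works on domains where both restrictions are reducible (case (ii) in the paper's trichotomy). On a domain where both $a'$ and $b'$ restrict to pseudo-Anosovs, a product of two pseudo-Anosov powers need not be pseudo-Anosov --- indeed $\rho_Y({a'}^{Q}{b'}^{Q})$ could be reducible or even trivial if the restrictions generate a cyclic group. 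Handling this requires Fujiwara's uniform theorem (Theorem~\ref{fuji}), which you never invoke, together with a word shaped so that it degenerates gracefully in the commuting case; the paper uses $w = b_1^{P}a_1^{P}b_1^{-P}a_1^{P}$, which equals $a_1^{2P}$ when the restrictions commute and is a nontrivial word in an all-pseudo-Anosov free group when they are independent.

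The mixed case (one restriction pseudo-Anosov, the other reducible) you correctly flag as delicate, but you only gesture at a resolution via Theorem~\ref{trans}, Lemma~\ref{behrstock}, and Theorem~\ref{bgi}, and then concede that making everything work simultaneously ``is where the real work lies.'' That concession is accurate: the missing work is precisely the paper's choice of the single word $b_1^{P}a_1^{P}b_1^{-P}a_1^{P}$, which on such a domain is read either as a product of powers of the two pseudo-Anosovs $b_1^{P}a_1b_1^{-P}$ and $a_1$ (reducing to the Fujiwara case when $a_1$ is the pseudo-Anosov one), or as a word in the pair $a_1$ and $b_1^{P}a_1b_1^{-P}$, which Corollary~\ref{strans} shows is sufficiently different on that domain because $b_1^{P}$ translates $\sigma(a_1)$ at least distance $3$ in $\C{Y}$ (reducing to Proposition~\ref{zigzag}). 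By contrast, the ``compatibility across domains'' you worry about is not actually the hard part --- it is automatic because $a_1,b_1$ are fixed words in $a,b$ and Proposition~\ref{conj} applies domain by domain. So the architecture is right, but the decisive idea --- one explicit word verified pseudo-Anosov under all three local scenarios at once --- is absent.
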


First, let us narrow our starting point.  By definition, the finite-index pure subgroup $H = \grp{\Sigma} \cap \Gamma(S)$ has the same active subsurface as $\grp{\Sigma}$.  Suppose the index of $H$ in $\grp{\Sigma}$ is $d$.  Lemma 3.4 of Shalen and Wagreich \cite{SW} provides a generating set for $H$ of words less than $2d-1$ in length according to the original generating set.  Although they state the lemma for finite generating sets, nothing prevents the proof from applying to a general group.  Therefore, if we find a full-support mapping class in $H$ whose word length is less than $l_1$ in the new generating set, its word length is less than $l_1(2d-1)$ in the original generating set.  Recall that $\Gamma(S)$ is the kernel of the $\Mod{S}$ action on homology with coefficients in $\Z/3\Z$; its index, $|\Aut{H_1(S,\Z/3\Z)}|$, gives an upper bound for $d$.

Let $\Sigma' = \{h_i\}$ be the new generating set for $H$.  Renumbering as necessary, Lemmas~\ref{nestseq} and \ref{nest} provide a sequence $\A{h_1} \subsetneq \A{h_1,h_2} \subsetneq \A{h_1,h_2,h_3} \dots$ terminating in at most $2\xi(S)$ steps at $\A{h_1,h_2,\dots,h_k}= \A{G}$.  Let $p_1 = h_1$ and suppose for each subgroup $\grp{p_{i-1},h_i}$ we can spell a full-support element $p_i$ (i.e. $\A{p_i} = \A{p_{i-1},h_i}$) with word length less than $l_2$ in the generating set $\{p_{i-1},h_i\}$.  By Lemma~\ref{nest1}, $\A{h_i} \subset \A{p_i}$; inductively assuming that $\A{p_{i-1}} = \A{h_1,\dots,h_{i-1}}$, we also know $\A{h_j} \subset \A{p_{i-1}} \subset \A{p_i}$ for all $j < i$.  Then Lemmas~\ref{nest1} -- \ref{nest} tell us $\A{p_i} = \A{h_1,\dots,h_i}$, for all $i$.  In particular, $p_k$ is full-support for $H$ and has $\Sigma'$-length less than $l_2^k$, where $k \leq 2\xi$.

We have reduced Theorem~\ref{srpa} to the case where $A$ consists of a pair of pure mapping classes $a$ and $b$.  Let $G = \grp{a,b}$.  Finding an element of full-support for $G$ is equivalent to finding a pseudo-Anosov on $\A{G}$, although we contend with the fact that $\A{G}$ may well be disconnected.  On every domain of $\A{G}$, one of the following possibilities occurs:
\begin{itemize}
\item[(i)] $a$ and $b$ are pseudo-Anosov.
\item[(ii)] $a$ and $b$ are both reducible.
\item[(iii)] One of $a$ and $b$ is pseudo-Anosov and the other is reducible.
\end{itemize}

To handle the first possibility, we quote a theorem of Fujiwara.  An earlier incarnation of this theorem inspired the short-word question in the first place.  Call a pair of pseudo-Anosovs \emph{independent} if all pairs of nontrivial powers fail to commute.  In torsion-free groups such as those we consider, either two pseudo-Anosovs generate a cyclic subgroup, or they are independent (see the proof of Theorem 5.12 in \cite{Iv2}).  In the latter case, Fujiwara's theorem applies:

\begin{thm}[Fujiwara \cite{Fu2}, partial version]\label{fuji}There exists a constant $L = L(S)$ with the following property.  Suppose $a,b \in \Mod{S}$ are independent pseudo-Anosovs.  Then for any $n,m \geq L$, $\grp{a^n,b^m}$ is an all-pseudo-Anosov rank-two free group.\end{thm}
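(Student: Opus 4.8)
The plan is to run a ping-pong argument in the curve complex $\C{S}$, using the fact (recorded at the start of Section~\ref{CC}) that a mapping class is pseudo-Anosov exactly when it has unbounded orbit in $\C{S}$, while reducible and finite-order classes have bounded orbits. Thus it suffices to produce $L=L(S)$ so that for $n,m\geq L$ every nontrivial reduced word $w$ in $a^{\pm n},b^{\pm m}$ (i) is nontrivial in $\grp{a^n,b^m}$, giving freeness of rank two, and (ii) has unbounded $\C{S}$-orbit, giving that $w$ is pseudo-Anosov. I would obtain both simultaneously by showing that, for a fixed basepoint $\gamma_0$, the path traced by applying successive syllables of $w$ to $\gamma_0$ is a global quasigeodesic, so that a word of length $\ell$ displaces $\gamma_0$ a distance growing linearly in $\ell$; this is positive for $\ell\geq 1$ (no relations) and unbounded under iteration of any cyclically reduced word (pseudo-Anosov), and pseudo-Anosovness passes to all conjugates.

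First I would recall the dynamics of a single pseudo-Anosov. By the minimal-translation Theorem~\ref{trans}, the orbit $k\mapsto g^k(\gamma_0)$ has displacement $\geq c(S)|k|$; combined with the $\delta(S)$-hyperbolicity of $\C{S}$ \cite{MM1} and stability of quasigeodesics (the Morse lemma), each such orbit is a $(\lambda,\epsilon)$-quasigeodesic quasi-axis with constants depending only on $c(S)$ and $\delta(S)$, hence only on $S$. So $g=a$ and $g=b$ act loxodromically with a uniform quality, and independence means the endpoint pairs $\{a^{+},a^{-}\}$ and $\{b^{+},b^{-}\}$ in $\partial\C{S}$ are disjoint (a shared endpoint would force a common power, violating independence).

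Next is the concatenation estimate, which is the heart of the matter. For $n,m\geq L$, each syllable $a^{\pm n}(\gamma_0)$-orbit segment or $b^{\pm m}(\gamma_0)$-orbit segment is a long uniform quasigeodesic segment of length $\gtrsim c(S)L$. The danger is backtracking at the junctions of $w$, where one switches from an $a$-segment to a $b$-segment; I would bound this uniformly using the subsurface-projection machinery. Since $a$ and $b$ are pseudo-Anosov their active subsurfaces are all of $S$, and Behrstock's Lemma~\ref{behrstock} together with the bounded geodesic image Theorem~\ref{bgi} control how the projections of the incoming and outgoing segments to relevant subsurfaces can overlap, forcing the junction turn to be near the quasi-axis endpoints and bounding the backtracking by a constant depending only on $S$. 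With long segments and uniformly bounded backtracking, the local-to-global principle for quasigeodesics in a $\delta(S)$-hyperbolic space certifies that the whole path is a global quasigeodesic, and choosing $L$ so that $c(S)L$ exceeds the local-to-global threshold (a function of $\delta(S)$ and the junction bound, hence of $S$) closes the argument. Applying the ping-pong lemma then yields that $\grp{a^n,b^m}$ is free of rank two, and the linear displacement of cyclically reduced words yields that every nontrivial element is pseudo-Anosov.

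The main obstacle I anticipate is precisely the \emph{uniformity} of $L$ in $S$ alone. A naive north--south ping-pong would choose disjoint neighborhoods of the four endpoints and take powers large enough to contract into them; but the required neighborhood sizes, and hence the number of powers, degrade as the endpoints of $a$ and $b$ approach one another, and their separation is not controlled uniformly over all independent pairs. The concatenation route avoids this by never measuring endpoint separation: it replaces that global quantity with a purely local, uniformly bounded junction estimate supplied by Lemma~\ref{behrstock} and Theorem~\ref{bgi}, after which the uniform hyperbolicity constant $\delta(S)$ and the uniform translation constant $c(S)$ deliver a single threshold $L(S)$. Verifying that the junction backtracking is genuinely bounded independently of the pair—rather than merely finite for each pair—is the delicate step on which the whole uniform statement rests.
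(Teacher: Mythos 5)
This statement is not proved in the paper at all: it is quoted verbatim from Fujiwara \cite{Fu2} and used as a black box (its full version reappears in Section~\ref{schottky}). So there is no internal proof to compare against; your proposal has to stand on its own as a proof of Fujiwara's theorem, and as written it has a genuine gap at exactly the step you flag as delicate.

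The gap is the junction estimate. You propose to bound the backtracking where an $a^{\pm n}$-segment meets a $b^{\pm m}$-segment using Behrstock's Lemma~\ref{behrstock} and the Bounded Geodesic Image Theorem~\ref{bgi}, but both of those tools are statements about \emph{proper} domains: Behrstock's lemma requires two overlapping proper subsurfaces $Y$ and $Z$, and Theorem~\ref{bgi} requires a proper domain $Y$ to project to. As you yourself note, $a$ and $b$ here are pseudo-Anosov on all of $S$, so there is no proper subsurface attached to either element, and the subsurface-projection machinery has nothing to bite on. (This is precisely why the paper's own construction, Proposition~\ref{zigzag}, is stated for \emph{reducible} pure classes with proper filling domains $A$ and $B$ -- there the projections $\pi_A$, $\pi_B$ do the work, and that argument genuinely does not transfer to the all-pseudo-Anosov case.) The uniform input that actually closes Fujiwara's argument is of a different nature: one needs that the quasi-axes of two independent pseudo-Anosovs can fellow-travel only for a distance bounded by a constant depending on $S$ alone, and this comes from acylindricity of the $\Mod{S}$-action on $\C{S}$ (Bowditch), or equivalently from a uniform weak-proper-discontinuity statement. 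Without that, the junction bound is merely finite for each pair $(a,b)$, which gives the non-uniform version you correctly reject. Your overall architecture (uniform quasi-axes from Theorem~\ref{trans} plus $\delta$-hyperbolicity, then local-to-global for quasigeodesics) is the right skeleton, but the named tools cannot supply the one uniform constant on which the whole theorem rests; you would need to import acylindricity or an equivalent result in its place.
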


Case (ii) relies on two results.  The first, Proposition~\ref{zigzag} below, is a subset of Proposition~\ref{coreconstruction} from the introduction; both are proved in Section \ref{blueprint}.  Define $Q = Q(S) = \max\{3,(2M + 4)/c\}$ where $M=M(S)$ is the constant from Theorem~\ref{bgi} and $c=c(S)$ is the minimal translation length from Corollary \ref{strans}.  Recall that a pair of pure reducible mapping classes $a$ and $b$ are \emph{sufficiently different} if some $A \in \A{a}$ and $B \in \A{b}$ together fill $S$.

\begin{prp}\label{zigzag} For any $m,n > Q$ and sufficiently different pure mapping classes $a$ and $b$, the group $\grp{a^m, b^n}$ is a rank-two free group and its elements are each either pseudo-Anosov or conjugate to a power of $a$ or $b$.\end{prp}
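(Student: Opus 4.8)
The plan is to prove both assertions of the proposition—freeness and the element classification—at once, by reducing everything to a single claim: \emph{every cyclically reduced word of length at least two in $a^m$ and $b^n$ is pseudo-Anosov.} Granting this, take $w\neq 1$ and write $w=uvu^{-1}$ with $v$ cyclically reduced. If $v$ has length $1$ it is a power of $a^m$ or $b^n$, so $w$ is conjugate to a power of $a$ or $b$; if $v$ has length $\geq 2$ then $v$, hence its conjugate $w$, is pseudo-Anosov. Since a nonempty reduced word cannot cyclically reduce to the empty word (its innermost letters would have to cancel, contradicting reducedness), no nontrivial reduced word represents the identity, so $\grp{a^m,b^n}$ is free of rank two. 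Two structural facts set the stage. First, because $A$ and $B$ fill $S$, the curve $\del B$ cannot be isotoped off $A$ (else it would miss both $A$ and $B$), so $\del B$ projects to $A$ and, symmetrically, $\del A$ to $B$; by Lemma~\ref{threecases}(ii), $A$ and $B$ \emph{overlap}. Second, a pure reducible mapping class fixes the isotopy class of its own pseudo-Anosov domain and of that domain's boundary, so $a$ fixes $A$ and $\del A$, and $b$ fixes $B$.

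Fix a cyclically reduced $w=g_1g_2\cdots g_L$ with $L\geq 2$, the $g_i$ alternating nonzero powers of $a^m$ and $b^n$. Set $h_i=g_1\cdots g_i$, let $X_i\in\{A,B\}$ match the type of $g_i$, and define the \emph{domain chain} $Y_i=h_{i-1}(X_i)$. Using that each $g_i$ fixes $X_i$, one checks $Y_{i-1}=h_{i-1}(X_{i-1})$ and $Y_{i+1}=h_{i-1}(g_iX_{i+1})$; applying the $\C{S}$-isometry $h_{i-1}^{-1}$, which intertwines subsurface projection, and taking $X_i=A$ (so $X_{i\pm1}=B$ and $g_i=a^{k_im}$), reduces the key projection distance to one computation on $A$:
\[
\dis{Y_i}{\del Y_{i-1},\del Y_{i+1}}=\dis{A}{\del B,\;a^{k_im}\del B}\geq c\,|k_im|\geq cm>2M+4,
\]
where the inequality is Corollary~\ref{strans} (both $\del B$ and $a^{k_im}\del B$ project to $A$ since $a$ preserves $A$) and the last step uses $m>Q\geq(2M+4)/c$. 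The same computation, together with cyclic reducedness (so $g_L$ and $g_1$ differ in type), shows that the bi-infinite chain $Z_\bullet$ built from $\{w^k(Y_i)\}$ consists of pairwise consecutively \emph{overlapping} domains whose interior projection distances $\dis{Z_j}{\del Z_{j-1},\del Z_{j+1}}$ exceed both the Behrstock threshold and the constant $M$ entering the theorems below.

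The remaining task is to convert this chain into unbounded motion in $\C{S}$, and here I expect the real work. The plan is the standard Behrstock-ordering argument: since consecutive domains overlap with projection distance above $10$, Lemma~\ref{behrstock} forces the chain to be \emph{linearly ordered}, so that for $j'>j$ every $\del Z_{j'}$ projecting to $Z_j$ lands within $4$ of $\del Z_{j+1}$ there, and dually on the other side. Choosing a base curve $\gamma$ aligned with the left end, this ordering yields $\dis{Z_j}{\gamma,w^N\gamma}\geq cm-O(1)>M$ for each interior domain of the chain attached to $w^N$. By the Bounded Geodesic Image theorem (Theorem~\ref{bgi}), any $\C{S}$-geodesic from $\gamma$ to $w^N\gamma$ must then contain, for each such $Z_j$, a vertex failing to project to $Z_j$—hence a vertex within distance $1$ of $\del Z_j$—and the linear order makes these vertices distinct and monotone along the geodesic. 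Counting them gives $\Dis{\gamma,w^N\gamma}$ growing linearly in $N$, so $w$ has unbounded orbit in $\C{S}$; as reducible and finite-order classes have bounded orbits and the group is torsion-free, $w$ is pseudo-Anosov, proving the claim.

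The main obstacle is exactly this last step: making the Behrstock ordering rigorous. One must rule out degenerate projections (chain domains that are nested or disjoint rather than overlapping with later ones, so some $\del Z_{j'}$ fails to project to $Z_j$), confirm that the near-$\del Z_j$ geodesic vertices occur in the chain's order (so the count is a genuine lower bound rather than an over-count subject to cancellation), and track the additive constants so that the surviving bound $cm-O(1)$ stays above the thresholds $M$, $4$, and $10$—precisely the role of $Q=\max\{3,(2M+4)/c\}$. By contrast, the overlap verification and the projection computation via Corollary~\ref{strans} are routine once purity of $a,b$ and the filling hypothesis are in hand.
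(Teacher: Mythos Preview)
Your architecture matches the paper's: both build a chain of alternating translates of $A$ and $B$ indexed by the syllables of $w$, use Corollary~\ref{strans} to get consecutive projection distances exceeding $2M$, and invoke Theorem~\ref{bgi} to force any $\C{S}$-geodesic between the endpoints to visit every link of the chain. Where you diverge is the finish. The paper packages that step as Lemma~\ref{geo}, an induction on chain length showing that $[w_i,w_{i+k}]$ contains a vertex of each $X_j$ and that the $X_j$ are pairwise disjoint. The crucial device is working with the \emph{sets} $X_j=w_j(C_A)$ or $w_j(C_B)$ (curves with empty projection to $Y_j$) rather than with $\del Y_j$: condition (iii) of Lemma~\ref{geo} says every curve outside $X_j$ projects to $Y_j$, so the induction never needs non-consecutive domains to overlap or their boundaries to project anywhere in particular.

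Your Behrstock-ordering finish is a legitimate alternative in spirit, but the obstacle you flag is real, not cosmetic. Running the order argument requires $\del Z_{j'}$ (or your base curve $\gamma$) to project to $Z_j$ for distant $j,j'$, and nothing in your setup prevents non-consecutive $Z_j,Z_{j'}$ from being nested, equal, or disjoint---for instance $A$ and $g_1g_2(A)$ have no a priori relation. One can push the Behrstock route through by casing on these degeneracies, but you have not done so, and the paper's Lemma~\ref{geo} is precisely the clean way around it: its hypotheses (i)--(iv) refer only to consecutive triples, the inductive step replaces an offending subgeodesic by one lying entirely in $X_{i+k-2}$ (using $\diam{X_j}\leq 2$) so that Theorem~\ref{bgi} applies, and the pairwise disjointness of the $X_j$ is a \emph{conclusion} rather than an assumption. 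Your proposal is on the right track but incomplete exactly where you say it is; Lemma~\ref{geo} is the missing engine.
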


Proposition \ref{zigzag} would be irrelevant if not for Proposition~\ref{conj}, whose proof we leave to Section \ref{gencon}.  Let $\xi = \xi(S)$ and $c=c(S)$ as above.  The following proposition works for any domain $S' \subset S$.

\begin{prp}\label{conj}  Suppose $a$ and $b$ are pure reducible mapping classes in $\Mod{S'}$ and $\grp{a,b}$ is irreducible.  For any $n \geq \xi - 1$ and $k \geq 20/c$, the words $$a_1 = (b^ka^k)^nb^k \cdot a \cdot ((b^ka^k)^nb^k)^{-1} \quad \text{and} \quad b_1 = (a^kb^k)^na^k \cdot b \cdot ((a^kb^k)^na^k)^{-1}$$ are sufficiently different pure reducible mapping classes.
\end{prp}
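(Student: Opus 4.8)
The first two assertions are automatic. Conjugation preserves purity, and since $\A{a_1}=w_a\A{a}$ and $\A{b_1}=w_b\A{b}$, where $w_a=(b^ka^k)^nb^k$ and $w_b=(a^kb^k)^na^k$ are homeomorphisms of $S'$, the active subsurfaces of $a_1$ and $b_1$ are images of the proper subsurfaces $\A{a},\A{b}\subsetneq S'$ and hence still proper; so $a_1,b_1$ are pure reducible. The content is therefore to exhibit a domain $A_1$ of $a_1$ and a domain $B_1$ of $b_1$ filling $S'$. As the domains of $a_1$ are exactly $\{w_a(A): A\text{ a domain of }a\}$, and likewise for $b_1$, I would fix one domain $A$ of $a$ and one domain $B$ of $b$ and show that $w_a(A)$ and $w_b(B)$ fill $S'$. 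I plan to reduce filling to a distance estimate in $\C{S'}$: if some curve $\gamma$ missed both subsurfaces, then $\gamma$ would be disjoint from both $w_a(\del A)$ and $w_b(\del B)$, forcing $\dis{S'}{w_a(\del A),w_b(\del B)}\le 2$. Hence it suffices to prove
\[ \dis{S'}{w_a(\del A),\,w_b(\del B)}\ge 3. \]

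The fuel for such an estimate is irreducibility. If a curve $\gamma$ were disjoint from every domain of $a$ and every domain of $b$, then by Lemma~\ref{fixmove}(a) neither $a$ nor $b$ would move $\gamma$, so $\grp{a,b}$ would fix $\gamma$, which is impossible for an irreducible subgroup. Thus the domains of $a$ and $b$ \emph{collectively} fill $S'$. The purpose of the words $w_a,w_b$ is to concentrate this collective filling into the two single images $w_a(A)$ and $w_b(B)$, and the purpose of the high powers and of the length $n$ is to make this concentration effective.

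Concretely, I would run a ping-pong calibrated to the stated constants. Because $a$ is pure with domain $A$, purity forbids permuting components, so $a^k(A)=A$ and Corollary~\ref{strans} gives $\dis{A}{a^k x,x}\ge ck\ge 20$ for every $x$ projecting to $A$; symmetrically for $b,B$. Combined with Behrstock's Lemma~\ref{behrstock} (threshold $10$, conclusion $4$) and the Bounded Geodesic Image Theorem~\ref{bgi} (constant $M$), the point is that for overlapping domains, applying $a^k$ to a curve whose $A$-projection lies within $10$ of $\del B$ moves that projection past distance $20-10=10$ from $\del B$, so Behrstock pins the $B$-projection within $4$ of $\del A$; the mirror statement holds for $b^k$. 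I would use these per-letter bounds to track the projections of $\gamma$ along a chain of domains, showing that as the alternating word is applied the subsurface filled by the relevant images strictly grows. Since a strictly increasing chain of subsurfaces of $S'$ is bounded in length in terms of $\xi(S')\le\xi$ (Lemma~\ref{nestseq}), at most $\xi(S')-1\le\xi-1$ enlargements bring the filled region up to all of $S'$; this is exactly the number of alternating blocks guaranteed by $n\ge\xi-1$. The asymmetry between $w_a$ (whose leftmost block is $b^k$) and $w_b$ (whose leftmost block is $a^k$) is what places $w_a(A)$ and $w_b(B)$ at opposite ends of this spreading process, so that together they leave no curve uncovered.

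The main obstacle is this middle step: certifying that no curve survives disjoint from both images through the entire word. Pulling back, one asks that $\delta=w_a^{-1}(\gamma)$ miss $A$ while $\epsilon=w_b^{-1}(\gamma)$ miss $B$, with $\delta=(w_a^{-1}w_b)(\epsilon)$, and must show that the alternating high powers accumulated in $w_a^{-1}w_b$ force $\pi_A(\delta)\neq\nada$, a contradiction. Making the projection bookkeeping uniform is where the real work lies: one must verify that the distance-$20$ gain from each power survives the accumulating Behrstock error of $4$ and the geodesic-image error $M$ across all $\sim 2n$ letters, and that at each stage the filled subsurface genuinely \emph{enlarges} rather than merely oscillating inside a proper subsurface of $S'$. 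It is precisely this bookkeeping that dictates the thresholds $k\ge 20/c$ and $n\ge\xi-1$, rather than unspecified large constants.
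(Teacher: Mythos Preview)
Your instincts are right---purity and reducibility are immediate, Corollary~\ref{strans} supplies the $ck\ge 20$ translation, and Behrstock's lemma is the engine---but the organizational structure that makes the induction go through is missing, and your fixed-single-domain setup will not support it.

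The gap is this: you fix \emph{one} domain $A$ of $a$ and \emph{one} domain $B$ of $b$ and plan to ping-pong between them. But nothing guarantees that $A$ and $B$ overlap; they could be disjoint or nested, in which case Behrstock's lemma says nothing. The paper's remedy is to introduce the \emph{overlap graph} $\Ov{a,b}$, whose vertices are all domains of $a$ and of $b$ that overlap some domain of the other, with edges recording overlap. Irreducibility of $\grp{a,b}$ forces this bipartite graph to be nonempty and \emph{connected} (Lemmas~\ref{empty}, \ref{oerlap}, \ref{connected}). The induction in Lemma~\ref{tangle} then tracks $\alpha_1\in\sigma(a)$ not between two fixed domains but through growing neighborhoods $Y_0\subset Y_1\subset\cdots$ in $\Ov{a,b}$: each application of $b^ka^k$ enlarges the set of $b$-overlappers on which the projection estimate $\dis{B_j}{\,\cdot\,,\del A}\ge 14$ holds. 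The bound $n\ge\xi-1$ comes from the graph having at most $\xi$ vertices of each color (so radius-$2$ neighborhoods saturate in $\xi-1$ steps), not from Lemma~\ref{nestseq}, which concerns \emph{nested} cut-coded subsurfaces and is not the relevant count here. Your phrase ``the subsurface filled by the relevant images strictly grows'' is gesturing at this, but without the overlap graph you have no mechanism to certify growth or to know when it terminates.

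Two smaller points. First, your reduction to $\dis{S'}{w_a(\del A),w_b(\del B)}\ge 3$ is a valid restatement of filling, but you never say how to extract a $\C{S'}$-distance from subsurface projection data. The paper sidesteps this by proving filling directly (Lemma~\ref{fillcondition}): given any $\gamma$, project it to some overlapper and use one Behrstock step plus a triangle inequality to force $\gamma$ to meet $\alpha$ or $\beta$. Second, the Bounded Geodesic Image Theorem is not used anywhere in the proof of this proposition; only Corollary~\ref{strans} and Lemma~\ref{behrstock} are needed, and the constants $20$, $14$, $12$, $10$, $4$ in the paper's chain of inequalities are calibrated exactly to Behrstock's thresholds.
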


We handle case (iii) by converting it to either case (i) or (ii), so with the three results above we may finish the proof.  Recall we face the situation where $a$ and $b$ are pure reducible mapping classes generating a group $G$ with possibly disconnected active subsurface.  Our task is to write a word in $a$ and $b$ that induces a pseudo-Anosov on every domain of $G$.

Let $a_1$ and $b_1$ be as in Proposition~\ref{conj}.  Because these are simply conjugates of $a$ and $b$, they fulfill the same possibilities (i)-(iii) on each domain of $\A{G}$.  Let $\bar{L} = \max\{L(S'): S' \in T\}$, where $T$ is the finite set of topological types of domains of $S$ (i.e., connected non-pants subsurfaces), and $L$ is the constant in Theorem~\ref{fuji}.  Similarly let $\bar{Q} = \max\{Q(S'): S' \in T\}$.  Choose $P \geq \max\{\bar{L},\bar{Q}\}$.  Consider the following word: $$w = b_1^Pa_1^Pb_1^{-P}a_1^{P}$$  On domains where possibility (i) holds, either $a_1$ and $b_1$ commute and $w = a_1^{2P}$, a pseudo-Anosov, or $a_1$ and $b_1$ are independent and $w$ is pseudo-Anosov by Theorem~\ref{fuji}.  On domains where possibility (ii) holds, $w$ is pseudo-Anosov by Propositions~\ref{zigzag} and \ref{conj}.  On domains where possibility (iii) holds, and $a_1$ is the pseudo-Anosov, we see by re-writing $w$ as $b_1^Pa_1^Pb_1^{-P} \cdot a_1^{P}$ that it is the product of powers of pseudo-Anosovs, so we may proceed as we did for case (i).  Otherwise $a_1$ is reducible, and for any $\gamma \in \sigma(a_1)$, $\dis{X}{b_1^P(\gamma),\gamma} \geq Pc \geq Qc \geq 3$, where the leftmost inequality employs Corollary \ref{strans} and the rest are by construction.  In particular, $\gamma \in \sigma(a_1)$ and $b_1^P(\gamma) \in \sigma(b_1^Pa_1b_1^{-P})$ together fill the domain, so $a_1$ and $b_1^Pa_1b_1^{-P}$ are sufficiently different pure reducible mapping classes.  Then Proposition~\ref{zigzag} guarantees $w$ is pseudo-Anosov.  Thus on all domains of $\A{G}$, $w$ is pseudo-Anosov, meaning $w$ is full-support for $G$.

In terms of $\{a,b\}$ the word length of $w$ is $4P\cdot(2k(2n+1)+1)$.  Therefore one can let $l_2 = 4P\cdot(80\xi/c + 1)$.  For Theorem~\ref{srpa} one may take $$ K(S) = 2 \cdot |\Aut{H_1(S,\Z/3\Z)}| \cdot (320P\xi/c + 4P)^{2\xi}.$$\QED

\subsection{Finding sufficiently different reducibles}\label{gencon}
The purpose of this section is to prove Proposition~\ref{conj}.  In the hypothesis of the proposition, $a$ and $b$ are pure reducible elements of $\Mod{S'}$, where $S'$ is a domain in $S$, and $\grp{a,b}$ is irreducible.  Given any $k \geq 20/c(S)$ and $n \geq \xi(S) - 1$, let $u = (b^ka^k)^nb^k$ and $v = (a^kb^k)^na^k$.  To prove the proposition, we show that any choice of $\alpha \in \sigma(uau^{-1})$ and $\beta \in \sigma(vbv^{-1})$ together fill $S'$.  Necessarily $\alpha$ and $\beta$ will bound some domains $A \in \A{uau^{-1}}$ and $B \in \A{vbv^{-1}}$, and $A$ and $B$ together fill $S'$.

Without loss of generality we may assume $S' = S$.  

We prove Proposition~\ref{conj} in two steps.  First, we use Behrstock's Lemma (\ref{behrstock}) to follow the image of any $\alpha_1 \in \sigma(a)$ and $\beta_1 \in \sigma(b)$ under the composition of alternations of high powers of $a$ and $b$.  We find that $u(\alpha_1)$ and $v(\beta_1)$ have a certain subsurface projection property.  The second step is to prove that this property implies the two curves fill $S$.

For the purpose of step one, we introduce the \emph{overlap graph} for a pair of pure mapping classes.  Suppose $a$ and $b$ are pure nontrivial mapping classes generating the group $G$.  Let $\{A_i\}_{1\leq i \leq m}$ and $\{B_j\}_{1\leq j \leq n}$ be the domains of $a$ and $b$ respectively.  The overlap graph $\Ov{a,b}$ consists of a vertex for every domain of $a$ that overlaps with some domain of $b$, and for every domain of $b$ that overlaps with some domain of $a$---recall that two domains overlap if the boundary of each domain projects to the other.  Edges connect domains that overlap, so that one may color the vertices depending on whether they represent domains of $a$ or $b$, obtaining a bipartite graph.  Assign each edge length one so that $\Ov{a,b}$ has the usual path metric.

\begin{figure}[h]
\begin{center}
\includegraphics{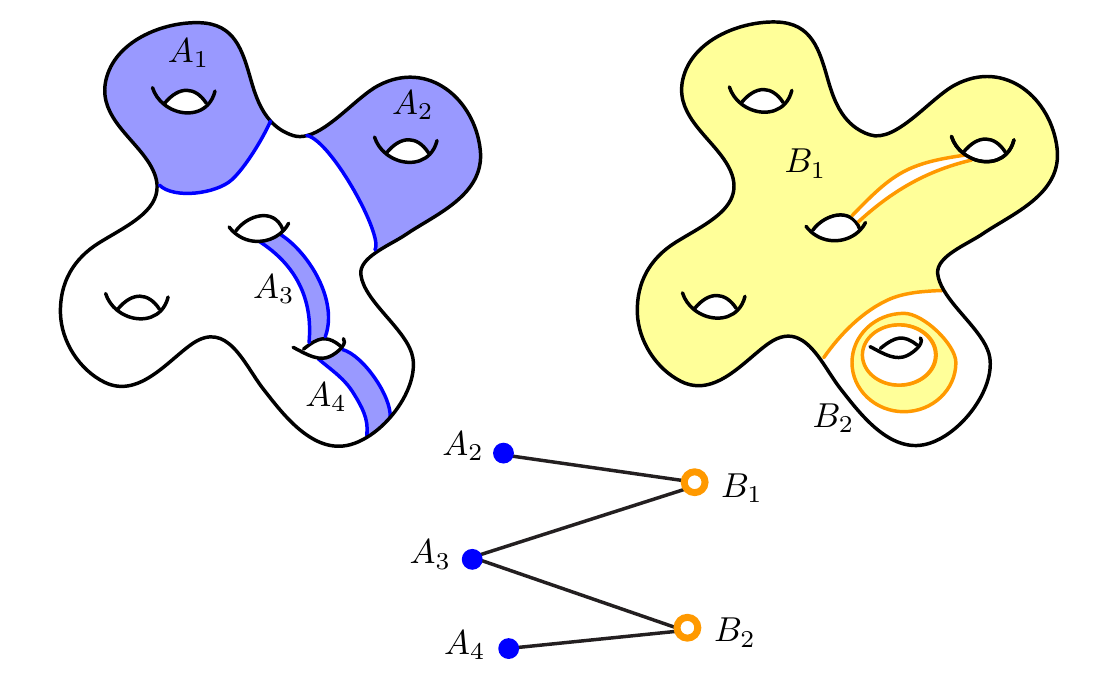}
\caption{\emph{Example of an overlap graph.}}
\end{center}
\end{figure}

Note that the foregoing definition works whether or not $G$ contains a pseudo-Anosov.  Now let us assume that, as in the hypotheses for Proposition~\ref{conj}, $G$ does contain a pseudo-Anosov, or equivalently, $\A{G}$ is the connected surface $S'$.  In particular $a$ and $b$ fix no common curve.  If $a$ is not pseudo-Anosov, then any domain $A_i$ has essential boundary consisting of curves not fixed by $b$.  This boundary must project to some domain $B_j$ of $b$.  If $B_j$ has no essential boundary, then it is all of $S'$ and $b$ is pseudo-Anosov.  Otherwise, the boundary projects to some domain of $a$.  If $\del B_j$ projects to $A_i$, then $B_j$ and $A_i$ overlap.  If not, then $A_i$ nests in $B_j$ by Lemma~\ref{threecases}, and $\del B_j$ projects to some other domain $A_k$.  In this case, $\del A_k$ in turn projects to $B_j$, because otherwise $B_j$ nests in $A_k$, implying $A_i$ nests in $A_k$, a contradiction.  So $B_j$ and $A_k$ overlap.  In either case $\Ov{a,b}$ has at least two vertices and an edge connecting them.  These observations imply

\begin{lmm}\label{empty}If $\A{G}$ is connected and $\Ov{a,b}$ is empty, then at least one of $a$ and $b$ is pseudo-Anosov.\end{lmm}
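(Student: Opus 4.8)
The plan is to prove the contrapositive: assuming neither $a$ nor $b$ is pseudo-Anosov, I would exhibit a domain of $a$ overlapping a domain of $b$, so that $\Ov{a,b}$ is nonempty. The hypothesis that $\A{G}$ is connected, together with the fact that $G=\grp{a,b}$ contains a pseudo-Anosov, means $\sigma(G)$ is empty, so no curve is fixed by all of $G$; in particular $a$ and $b$ share no fixed curve. Since $a$ is not pseudo-Anosov it has a proper domain $A_i$, whose boundary $\del A_i$ is a nonempty multicurve fixed by $a$. As $a$ fixes $\del A_i$ but $a$ and $b$ have no common fixed curve, $b$ must move some component of $\del A_i$, so by Lemma~\ref{fixmove}(a) the multicurve $\del A_i$ projects to some domain $B_j$ of $b$.

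The heart of the argument is then a projection-chasing case analysis. Since $b$ is also not pseudo-Anosov, $B_j$ is proper, so $\del B_j$ is nonempty; the same ``no common fixed curve'' reasoning shows $a$ moves $\del B_j$, hence $\del B_j$ projects to some domain $A_k$ of $a$. First I would dispose of the easy case: if $\pi_{A_i}(\del B_j)$ is nonempty, then $\del A_i$ projects to $B_j$ and $\del B_j$ projects to $A_i$, so Lemma~\ref{threecases}(ii) gives that $A_i$ and $B_j$ overlap, producing an edge of $\Ov{a,b}$ and finishing.

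The remaining case, where $\pi_{A_i}(\del B_j)$ is empty, is where I expect the main obstacle to lie. Here $\del A_i$ projects to $B_j$ while $\pi_{A_i}(\del B_j)$ is empty, so Lemma~\ref{threecases}(iii) forces $A_i$ to nest in $B_j$; moreover $A_k\neq A_i$, since $\del B_j$ projects to $A_k$ but not to $A_i$. The goal is now to show $A_k$ and $B_j$ overlap, for which I need $\del A_k$ to project to $B_j$. I would argue by contradiction: if $\pi_{B_j}(\del A_k)$ were empty, then since $\del B_j$ projects to $A_k$, Lemma~\ref{threecases}(iii) would give that $B_j$ nests in $A_k$; combined with $A_i$ nesting in $B_j$ this yields $A_i$ nesting in $A_k$, contradicting that distinct domains of a single pure mapping class are disjoint and hence cannot nest. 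Therefore $\del A_k$ projects to $B_j$, and with $\del B_j$ projecting to $A_k$, Lemma~\ref{threecases}(ii) shows $A_k$ and $B_j$ overlap. In every case $\Ov{a,b}$ acquires an edge, so if it is empty then one of $a,b$ is pseudo-Anosov. The one point demanding care throughout is the bookkeeping of exactly which boundary projects to which domain when invoking parts (ii) and (iii) of Lemma~\ref{threecases}, together with the use of disjointness of distinct domains of the same map to eliminate the degenerate nested configuration.
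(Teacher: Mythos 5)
Your argument is correct and follows essentially the same route as the paper: the paper proves this lemma via exactly the same projection-chasing case analysis (either $\del B_j$ projects back to $A_i$ and they overlap, or $A_i$ nests in $B_j$ and one finds a second domain $A_k$ of $a$ overlapping $B_j$, ruling out $B_j$ nesting in $A_k$ because distinct domains of $a$ cannot nest). Your handling of the degenerate nested configuration matches the paper's, so there is nothing to add.
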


In the case where $\Ov{a,b}$ is not empty, we distinguish between the domains represented by vertices---call these domains \emph{overlappers}---and those not represented.  Revisiting the discussion above, we can extract a fact so handy we should name it:

\begin{lmm}\label{oerlap}When neither $a$ nor $b$ are pseudo-Anosov, any non-overlapper of one nests in an overlapper of the other.\end{lmm}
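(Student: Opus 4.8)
The plan is to read the statement off the analysis already carried out in the paragraph preceding Lemma~\ref{empty}: that discussion essentially contains the argument, so here I would mostly reorganize it and add the one extra line that pins the target domain down as an \emph{overlapper}. By the symmetry of the statement it suffices to treat a non-overlapper $A$ among the domains of $a$; the case of $b$ is identical after exchanging roles. Throughout I would use that $G=\grp{a,b}$ is irreducible and hence fixes no curve, together with Lemma~\ref{fixmove}(a) (a pure subgroup moves $\gamma$ precisely when $\gamma$ projects to one of its domains) and the projection trichotomy of Lemma~\ref{threecases}.

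First I would locate a domain of $b$ that swallows $A$. Since $a$ is not pseudo-Anosov, $A$ is a proper domain, so $\del A$ is a nonempty multicurve; it is fixed by $a$ but, as $G$ fixes no curve, some component of it is moved by $b$, so by Lemma~\ref{fixmove}(a) it projects to some domain $B$ of $b$. Because $A$ is a non-overlapper it does not overlap $B$, so Lemma~\ref{threecases}(ii), combined with the fact that $\del A$ does project to $B$, forces $\pi_{A}(\del B)$ to be empty. Then Lemma~\ref{threecases}(iii), applied with $\del A$ projecting to $B$ but $\pi_{A}(\del B)$ empty, yields that $A$ nests in $B$.

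It then remains to check that $B$ is itself an overlapper, i.e.\ that it overlaps some domain of $a$. Since $b$ is not pseudo-Anosov, $B \neq S'$, so $\del B$ is a nonempty multicurve; it is fixed by $b$ but moved by $a$, hence projects to some domain $A'$ of $a$, and since $\pi_{A}(\del B)$ is empty we have $A' \neq A$. If $\del A'$ failed to project to $B$, then Lemma~\ref{threecases}(iii) would make $B$ nest in $A'$, and combined with $A \subset B$ this would give $A \subset A'$ by transitivity; but distinct domains of the single mapping class $a$ are disjoint pieces of the cut-coded subsurface $\A{a}$ and cannot nest, a contradiction. Therefore $\del A'$ projects to $B$, and since $\del B$ projects to $A'$, Lemma~\ref{threecases}(ii) shows $A'$ and $B$ overlap. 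Thus $B$ is an overlapper in which the non-overlapper $A$ nests, as claimed.

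The argument is essentially routine once the projection trichotomy is in hand, so I expect the only delicate point to be the final contradiction, where one must invoke that two distinct domains of the \emph{same} pure mapping class are disjoint, and hence non-nesting, so that $A \subset B$ and $B \subset A'$ cannot coexist with $A' \neq A$.
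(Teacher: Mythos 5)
Your proof is correct and follows essentially the same route as the paper: the paper's own justification of this lemma is precisely the discussion preceding Lemma~\ref{empty}, which locates the domain $B$ via projection of $\del A$, deduces nesting from Lemma~\ref{threecases}, and rules out $B$ nesting in another $a$-domain by the same transitivity contradiction you give. Your reorganization, including the explicit observation that distinct domains of a single pure mapping class cannot nest, matches the paper's intent exactly.
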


That observation helps us to the result below.

\begin{lmm}\label{connected}If $\A{G}$ is connected, then $\Ov{a,b}$ is connected.\end{lmm}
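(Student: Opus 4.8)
The plan is to argue by contradiction: from a disconnected overlap graph I would manufacture a nonempty proper subsurface whose boundary is fixed by all of $G$, contradicting the irreducibility that the hypothesis $\A{G}$ connected encodes (namely $\sigma(G) = \nada$). First I would dispose of the degenerate case where $a$ or $b$ is pseudo-Anosov: such a generator has $S$ as its only domain, which \emph{nests} every proper domain of the other generator rather than overlapping it, so $\Ov{a,b}$ has no vertices and is vacuously connected. Hence I may assume neither $a$ nor $b$ is pseudo-Anosov, and then Lemma~\ref{empty} guarantees $\Ov{a,b}$ is nonempty.

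Now suppose toward a contradiction that $\Ov{a,b}$ is disconnected, and let $\mathcal{C}$ be one connected component. Let $W$ be the subsurface filled by the union of all domains appearing as vertices of $\mathcal{C}$. Since domains joined by an edge overlap, and hence have connected union, the connectivity of $\mathcal{C}$ propagates to make $W$ a single connected essential subsurface.

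The heart of the argument is to show $\del W$ is fixed by both $a$ and $b$. Take a boundary curve $\gamma$ of $W$; without loss of generality it lies on $\del A_i$ for some $a$-domain $A_i$ in $\mathcal{C}$, so $a$ fixes $\gamma$. If $b$ moved $\gamma$, then by Lemma~\ref{fixmove}(a) $\gamma$ projects to some $b$-domain $B_j$, so by Lemma~\ref{threecases} either $A_i$ and $B_j$ overlap or $A_i$ nests in $B_j$. In the overlapping case $B_j$ is adjacent to $A_i$ in $\Ov{a,b}$, hence $B_j \in \mathcal{C}$ and $B_j \subset W$; but then $\gamma$ essentially meets the interior of $W$, so $\gamma$ cannot be a boundary curve of $W$, a contradiction. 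In the nesting case $A_i \subset B_j$; since $A_i$ is an overlapper it overlaps some $b$-domain $B_l$, necessarily distinct from and hence disjoint from $B_j$, forcing $A_i$ disjoint from $B_l$ and contradicting that overlap. Thus $b$ fixes $\gamma$, and by the symmetric argument every boundary curve of $W$ is fixed by both generators, so $\del W$ is a $G$-fixed multicurve.

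Finally I would verify that $W$ is a proper subsurface. Choosing an overlapper $D'$ in a different component $\mathcal{C}'$, the same nesting analysis (using Lemma~\ref{oerlap} to locate the non-overlappers) shows $D'$ cannot be contained in any domain of $\mathcal{C}$, so $D'$ lies outside $W$ and $W \neq S$. Since $S$ is connected and $W$ is proper, $\del W$ is a nonempty multicurve; being $G$-fixed, it gives $\sigma(G) \neq \nada$, contradicting the irreducibility equivalent to $\A{G}$ being connected. I expect the main obstacle to be the boundary-fixing step, where the trichotomy of Lemma~\ref{threecases} must be pushed to eliminate every way a boundary curve of $W$ could project into a domain of the opposite generator, and where care is needed that passing from the union of domains to the filled subsurface $W$ neither creates nor destroys the boundary curves under scrutiny.
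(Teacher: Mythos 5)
Your overall strategy --- assume $\Ov{a,b}$ is disconnected and manufacture a curve fixed by both $a$ and $b$ --- is a legitimate alternative to the paper's argument, which is direct: it joins any two vertices by a path on the surface, isotopes that path into the union of the domains (using that the complement of that union consists of disks and boundary-parallel annuli), and reads off an edge-path in $\Ov{a,b}$. But the step you yourself call the heart of your argument has a genuine gap. You assert that a boundary curve $\gamma$ of $W$ ``without loss of generality lies on $\del A_i$ for some $a$-domain $A_i$ in $\mathcal{C}$.'' This is false: $W$ is the subsurface \emph{filled} by the union of the overlapping domains of $\mathcal{C}$, and the boundary of such a filled union consists in general of entirely new curves, not components of any $\del A_i$ or $\del B_j$. (Already for two overlapping annular domains with cores $\alpha$ and $\beta$, the filled union is a regular neighborhood of $\alpha \cup \beta$, whose boundary curves are neither $\alpha$ nor $\beta$.) Everything downstream --- the claim that $a$ fixes $\gamma$ because $\gamma \subset \del A_i$, and the case analysis via Lemma~\ref{threecases} applied to the pair $(A_i, B_j)$ --- rests on this false identification, so as written the proof that $\del W$ is $G$-fixed does not go through.

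The strategy can be repaired, but by a different mechanism: show that a curve $\gamma$ of $\del W$ has \emph{empty projection to every domain of $a$ and of $b$}, then invoke Lemma~\ref{fixmove}(a). For this you need that overlappers lying in distinct components of $\Ov{a,b}$ are pairwise disjoint (your nesting-versus-overlap argument does establish this), that every non-overlapper nests inside an overlapper (Lemma~\ref{oerlap}), and that $\gamma$, being peripheral in $W$ and exterior to the filled unions of the other components, is therefore disjoint from, and not contained in, any domain whatsoever. Two smaller points also need tightening. First, a $G$-invariant multicurve does not literally certify $\sigma(G) \neq \nada$ under the paper's definition (membership in $\sigma(G)$ also requires every curve crossing it to be moved); the clean contradiction is that $\A{G} = S'$ forces $G$ to move every curve by Lemma~\ref{fixmove}(a), since every curve projects to the domain $S'$. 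Second, for properness of $W$ it is not enough that a domain $D'$ of another component is contained in no single domain of $\mathcal{C}$; you need that $D'$ is disjoint from the whole union and, being an essential non-pants subsurface, cannot sit inside a disk or boundary-parallel annulus that gets absorbed in the filling. Your disposal of the pseudo-Anosov case is fine and matches the paper's implicit convention that the empty graph is connected.
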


\begin{proof}Assuming $\Ov{a,b}$ is not empty, we show how to find a path between any two vertices.  Realize the domains as closed submanifolds with minimal pairwise intersection (i.e., ensure that boundary curves intersect essentially and use pairwise disjoint submanifolds to represent domains for the same mapping class).  For any two vertices $X$ and $Y$ of $\Ov{a,b}$, choose points $x$ and $y$ in the corresponding domains, and connect these by a path $p$.  Letting $A = \bigcup_i A_i$ and $B = \bigcup_j B_j$, one sees that each component of $S'\backslash(A \cup B)$ is a disk or boundary-parallel annulus with a boundary component consisting of pieces of $\del A \cup \del B$.  Thus one can isotope $p$ to lie entirely within $A \cup B$, and furthermore transversal to the boundary curves.  The path also lies entirely within overlappers, by Lemma~\ref{oerlap}.  Tracing the path gives a finite sequence of domains, each overlapping with the neighbor before and after, alternating between $A_i$'s and $B_j$'s by design.  The same sequence appears as a path in $\Ov{a,b}$ connecting $X$ and $Y$.\end{proof}

Now we can state precisely the first step to proving Proposition~\ref{conj}.  Assume $a$ and $b$ are as in Proposition~\ref{conj}, with the same definitions as above for $A = \bigcup_i A_i$ and $B = \bigcup_j B_j$.  Recall $c = c(S)$ is the minimal translation constant from Theorem~\ref{strans}, and $\xi(S)$ is surface complexity.

\begin{lmm}\label{tangle}Suppose $\alpha_1$ and $\beta_1$ are curves in $\sigma(a)$ and $\sigma(b)$ respectively, and $k$ and $n$ are integers such that $k \geq 20/c(S)$, and $n \geq \xi(S) - 1$.  For $\alpha = (b^ka^k)^nb^k(\alpha_1)$ and $\beta = (a^kb^k)^na^k(\beta_1)$, the following hold for any choice of $A_i$ or $B_j$.
\begin{itemize}
\item[(i)] If $\del A$ projects to $B_j$, then so does $\alpha$, and $\dis{B_j}{\alpha,\del A} \geq 14$.
\item[(ii)] If $\del B$ projects to $A_i$, then so does $\beta$, and $\dis{A_i}{\beta,\del B} \geq 14$.
\end{itemize}
\end{lmm}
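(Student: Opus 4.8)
The plan is to track the image of $\alpha_1$ one syllable at a time as the alternating high powers $b^k, a^k, b^k, \dots$ are applied, running a ping-pong argument driven by Behrstock's Lemma (\ref{behrstock}) and the minimal translation estimate of Corollary \ref{strans}; I will prove (i) and obtain (ii) by exchanging the roles of $a$ and $b$. Since $k \geq 20/c$, Corollary \ref{strans} says that applying $a^k$ displaces any curve projecting to a domain $A_i$ by at least $20$ in $\C{A_i}$, and likewise for $b^k$ on $\C{B_j}$. The engine of the argument is a one-step estimate: writing $\alpha = b^k(w)$, suppose $A_{i_1}$ is a domain of $a$ overlapping $B_j$, let $\gamma$ be a component of $\del A_{i_1}$ projecting to $B_j$, and suppose the inductive invariant $\dis{B_j}{w,\gamma} \leq 4$ holds. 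Using that $b^k$ acts isometrically on $\C{B_j}$ together with the triangle inequality,
\[
\dis{B_j}{\alpha,\gamma} \;\geq\; \dis{B_j}{b^k(\gamma),\gamma} - \dis{B_j}{b^k(w),b^k(\gamma)} \;\geq\; 20 - \dis{B_j}{w,\gamma} \;\geq\; 16 .
\]
Because $\gamma$ is a component of $\del A$, we have $\pi_{B_j}(\gamma) \subseteq \pi_{B_j}(\del A)$, so this yields $\dis{B_j}{\alpha,\del A} \geq 16 \geq 14$ and in particular shows $\alpha$ projects to $B_j$.

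The task then reduces to producing, for every $B_j$ to which $\del A$ projects, an overlapping domain $A_{i_1}$ and establishing the invariant. To find $A_{i_1}$: by hypothesis some component of $\del A$, lying in some $\del A_{i_0}$, projects to $B_j$, so by Lemma \ref{threecases} either $A_{i_0}$ overlaps $B_j$ (take $A_{i_1}=A_{i_0}$) or $A_{i_0}$ nests in $B_j$. In the nesting case $B_j$ cannot be a non-overlapper, for otherwise Lemma \ref{oerlap} would nest $B_j$ inside a domain $A_{i_1}$ of $a$, forcing $A_{i_0} \subsetneq B_j \subsetneq A_{i_1}$ between two disjoint domains of $a$, a contradiction; hence $B_j$ overlaps some $A_{i_1}$. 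The invariant itself comes from Behrstock's Lemma: since $A_{i_1}$ and $B_j$ overlap, $\dis{A_{i_1}}{w,\del B_j} \geq 10$ forces $\dis{B_j}{w,\del A_{i_1}} \leq 4$, hence $\dis{B_j}{w,\gamma} \leq 4$. To secure $\dis{A_{i_1}}{w,\del B_j} \geq 10$ I would apply the one-step estimate with $a$ and $b$ reversed to $w = a^k(v)$, which in turn requires the symmetric invariant $\dis{A_{i_1}}{v,\del B_j} \leq 4$ one syllable earlier.

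This sets up an induction down the word, alternating between the two families of domains and using Lemma \ref{projbound} (projections of multicurves have diameter at most $2$) to pass between single components and boundary multicurves. The crucial structural feature is that the Behrstock dichotomy resets the carried error to at most $4$ at each step, so it never accumulates past the available slack $20-4$. The delicate point is the base of the induction: the initial curve $\alpha_1 \in \sigma(a)$ is a boundary curve of $a$-domains and need not project to a given $A_{i_1}$ at all, so several initial syllables must be spent before the image is displaced in every relevant $\C{A_{i_1}}$ and $\C{B_j}$ at once. Bounding the number of such startup syllables is exactly what $n \geq \xi - 1$ supplies: the overlap graph is connected (Lemma \ref{connected}) and any chain of nested domains has length at most $2\xi$ (Lemma \ref{nestseq}), so on the order of $\xi$ alternations suffice for the invariant to hold simultaneously at every overlapping pair. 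I expect this multi-domain bookkeeping—verifying the invariant at all relevant pairs at once, tracking projection non-emptiness, and pinning the startup count to $\xi$—to be the main obstacle, whereas the two-domain ping-pong engine above is routine once isolated.
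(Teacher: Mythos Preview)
Your approach is essentially the paper's: the same ping-pong engine driven by Corollary~\ref{strans} and Lemma~\ref{behrstock}, together with connectedness of the overlap graph. Two small corrections are worth making. First, the paper organizes the induction \emph{forward} rather than backward: instead of fixing a target $B_j$ and tracing back through overlapping domains to one meeting $\alpha_1$, it tracks the growing set $Y_m$ of $b$-overlappers on which the estimate $\dis{Y}{(b^ka^k)^mb^k(\alpha_1),\del A}\geq 14$ already holds, and shows in one stroke that every $b$-overlapper adjacent (in $\Ov{a,b}$) to an $a$-overlapper adjacent to $Y_m$ lies in $Y_{m+1}$. This is exactly the ``multi-domain bookkeeping'' you flag as the main obstacle, and the forward formulation makes it a clean two-line induction step rather than a path-chasing argument. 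Second, your appeal to Lemma~\ref{nestseq} is misplaced: that lemma bounds chains of \emph{nested} cut-coded subsurfaces and plays no role here. The bound $n\geq\xi-1$ comes from the much simpler observation that $b$ has at most $\xi$ domains, so $\Ov{a,b}$ has at most $\xi$ vertices of $b$-color; since $Y_{m+1}$ contains the radius-$2$ neighborhood of $Y_m$ in the connected bipartite graph, $Y_{\xi-1}$ already contains every $b$-overlapper.
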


\begin{proof}By symmetry we need only prove (i), the case for $\alpha$.  We can use the overlap graph to track the image of $\alpha_1$ under alternating applications of $b^k$ and $a^k$.  For ease of exposition, we will not always distinguish between overlapper domains and their representative vertices in $\Ov{a,b}$.

Let $Y_0$ be the vertices of $\Ov{a,b}$ corresponding to the overlappers of $b$ that intersect $\alpha_1$; note that $Y_0$ cannot be empty.  Let $X_0$ be the vertices adjacent to $Y_0$.  For $i \in \N$, let $Y_i$ be the vertices adjacent to $X_{i-1}$, and let $X_i$ be the vertices adjacent to $Y_i$.  In other words, $Y_i$ corresponds to $b$-domains that overlap with $a$-domains of $X_{i-1}$, and $X_i$ are the $a$-domains that overlap with $b$-domains of $Y_i$.

Observe that the vertices of $Y_{i+1}$ lie within a radius-two neighborhood of $Y_i$.  Because $\xi(S)$, the maximum number of disjoint curves on $S$, gives an upper bound on the number of domains for a mapping class in $\Mod{S}$, twice $\xi(S)$ bounds the number of vertices in $\Ov{a,b}$.  Because $\Ov{a,b}$ is connected, one sees that for $n \geq \xi(S)-1$, $Y_n$ consists of all the $b$-vertices, corresponding to all the overlapper domains of $b$.  If a domain $B_j$ of $b$ is not an overlapper, then by Lemma~\ref{oerlap} it nests in some domain of $a$, which precludes projection of $\del A$ to $B_j$.  To prove Lemma~\ref{tangle} it suffices to establish the following claim:

\begin{itemize}
\item[($\ast$)] For any domain $Y$ of $Y_n$, $\dis{Y}{(b^ka^k)^nb^k(\alpha_1),\del A} \geq 14$
\end{itemize}

We induct on $n$.  By definition, $\alpha_1$ projects to any domain $Y$ of $Y_0$.  Applying Theorem~\ref{strans},
$$\dis{Y}{b^k(\alpha_1),\alpha_1} \geq c|k| \geq 20 \geq 14$$
Because $\alpha_1$ is a component of the multicurve $\del A$, and because projection distances are diameters, we obtain the $n=0$ case of ($\ast$):
$$\dis{Y}{b^k(\alpha_1),\del A} \geq 14$$

Supposing ($\ast$) true for $n=m$, we prove it for $n=m+1$.  Consider any domain $Y$ of $Y_m$, and recall that $\pdiam{Y}{\del A} \leq 2$, by Lemma~\ref{projbound}.  For any $A_i$ overlapping $Y$, the triangle inequality gives
$$\dis{Y}{(b^ka^k)^mb^k(\alpha_1),\del A_i} \geq 12$$
In particular, $(b^ka^k)^mb^k(\alpha_1)$ projects to every $A_i$ that overlaps some $Y$ of $Y_m$; those $A_i$ are exactly the vertices of $X_m$.  In other words, for any $X \in X_m$, there exists $Y \in Y_m$ such that
$$\dis{Y}{(b^ka^k)^mb^k(\alpha_1),\del X} \geq 12$$
We may apply Behrstock's lemma, giving
$$\dis{X}{(b^ka^k)^mb^k(\alpha_1),\del Y} \leq 4$$
On the other hand, Theorem~\ref{strans} guarantees
$$\dis{X}{a^k(b^ka^k)^mb^k(\alpha_1),(b^ka^k)^mb^k(\alpha_1)} \geq c|k| \geq 20$$
Employing the triangle inequality,
$$\dis{X}{a^k(b^ka^k)^mb^k(\alpha_1),\del Y} \geq 16$$
The above holds for any $Y \in Y_m$, and $X \in X_m$.  Given any $Y' \in Y_{m+1}$, recall that $Y'$ corresponds to a domain overlapping some $X \in X_m$, which in turn overlaps with some $Y \in Y_m$.  We also know $\pdiam{X}{\del Y \cup \del Y'} \leq 2$, because $\del Y'$ and $\del Y$ are disjoint.  Another triangle inequality gives
$$\dis{X}{a^k(b^ka^k)^mb^k(\alpha_1),\del Y'} \geq 14$$
Again we can apply Behrstock's lemma.  In fact we can mirror the last four inequalites:
$$\dis{Y'}{a^k(b^ka^k)^mb^k(\alpha_1),\del X} \leq 4$$
$$\dis{Y'}{b^ka^k(b^ka^k)^mb^k(\alpha_1),a^k(b^ka^k)^mb^k(\alpha_1)} \geq 20$$
$$\dis{Y'}{b^ka^k(b^ka^k)^mb^k(\alpha_1),\del X} \geq 16$$
$$\dis{Y'}{b^ka^k(b^ka^k)^mb^k(\alpha_1),\del X'} \geq 14$$
where $X'$ is any domain of $X_{m+1}$.  Note that $\del X'$ is a subset of $\del A$, and recall $Y'$ was an arbitrary vertex of $Y_{m+1}$.  Thus we may re-write the last inequality to give the $m+1$ step of the induction claim: for any $Y \in Y_{m+1}$,
$$\dis{Y}{(b^ka^k)^{m+1}b^k(\alpha_1),\del A} \geq 14$$\end{proof}

We are ready for the second step to proving Proposition~\ref{conj}.  Its own proof will probably feel like d\'{e}j\`{a} vu.  Again, $a$ and $b$ are as in the proposition, with the same definitions as above for $A = \bigcup_i A_i$ and $B = \bigcup_j B_j$.  Note that in the following lemma, $\alpha$ and $\beta$ are arbitrary.

\begin{lmm}\label{fillcondition} Suppose $\alpha, \beta \in \V{S'}$ satisfy the following:
\begin{itemize}
\item[(i)] If $\del A$ projects to $B_j$, then so does $\alpha$, and $\dis{B_j}{\alpha,\del A} \geq 14$.
\item[(ii)] If $\del B$ projects to $A_i$, then so does $\beta$, and $\dis{A_i}{\beta,\del B} \geq 14$.
\end{itemize}
Then $\alpha$ and $\beta$ fill $S'$.\end{lmm}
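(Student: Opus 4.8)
The plan is to argue by contraposition. Assuming some curve $\gamma \in \V{S'}$ is disjoint from both $\alpha$ and $\beta$, I will produce a contradiction, which shows that every curve meets $\alpha$ or $\beta$, i.e.\ that $\alpha$ and $\beta$ fill $S'$. The whole argument is a single round of the Behrstock ping-pong, so it should indeed feel like déjà vu after Lemma~\ref{tangle}. First I would locate $\gamma$ inside an overlapper. Since $a$ and $b$ are reducible and $\grp{a,b}$ is irreducible with $\A{\grp{a,b}} = S'$ connected, the graph $\Ov{a,b}$ is nonempty and connected (Lemmas~\ref{empty} and~\ref{connected}) and $\grp{a,b}$ fixes no curve; hence by Lemma~\ref{fixmove}(a) the curve $\gamma$ projects to some domain. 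If that domain is a non-overlapper, Lemma~\ref{oerlap} nests it in an overlapper of the opposite colour, and the easy observation that projection to a nested subsurface forces projection to the larger one lets me conclude that $\gamma$ projects to an overlapper; after relabelling, and by the symmetry of the hypotheses, I may assume it is a $b$-domain $B_j$ overlapping an $a$-domain $A_i$.

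Now I would run the ping-pong. Because $A_i$ overlaps $B_j$ we have, by Lemma~\ref{threecases}(ii), that $\del A$ projects to $B_j$, so hypothesis (i) gives $\dis{B_j}{\alpha,\del A} \geq 14$; since $\gamma$ is disjoint from $\alpha$, Lemma~\ref{projbound} gives $\dis{B_j}{\gamma,\alpha} \leq 2$, and the triangle inequality, together with $\pdiam{B_j}{\del A} \leq 2$, yields $\dis{B_j}{\gamma,\del A_i} \geq 10$. This forces $\gamma$ to project to $A_i$, for otherwise $\gamma$ would be disjoint from $\del A_i$ and Lemma~\ref{projbound} would bound that same projection distance by $2$. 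Hence Behrstock's Lemma~\ref{behrstock} applies to the overlapping pair $A_i,B_j$ with multicurve $\gamma$ and gives $\dis{A_i}{\gamma,\del B_j} \leq 4$, so $\dis{A_i}{\gamma,\del B} \leq 6$. On the other hand $\del B$ projects to $A_i$ (again by overlap), so hypothesis (ii) gives $\dis{A_i}{\beta,\del B} \geq 14$, and disjointness of $\gamma$ from $\beta$ gives $\dis{A_i}{\gamma,\del B} \geq 14 - 2 = 12$, the desired contradiction. The case where $\gamma$ projects instead to an $a$-overlapper is identical, with the roles of $a,A,\alpha$ and hypothesis (i) interchanged with those of $b,B,\beta$ and hypothesis (ii).

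I expect the main obstacle to be the bookkeeping that keeps every projection distance defined and correctly oriented. One must confirm that $\del A$ (resp.\ $\del B$) genuinely projects to the relevant overlapper before invoking (i) (resp.\ (ii)); deduce from the large projection distance in $B_j$ that $\gamma$ itself projects to the partner domain $A_i$, so that Behrstock's lemma is even applicable; and pass freely between $\del A_i$ and $\del A$ (resp.\ $\del B_j$ and $\del B$) using the uniform diameter bound of Lemma~\ref{projbound}. The only genuinely topological input, as opposed to curve-complex estimates, is the reduction to an overlapper, which rests on irreducibility together with Lemma~\ref{oerlap}; everything else is the triangle inequality applied to the constants $14$, $10$, $4$, and $2$, which are arranged precisely so that the final bounds $6 < 12$ collide.
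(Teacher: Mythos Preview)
Your argument is correct and follows essentially the same Behrstock ping-pong as the paper's proof: locate $\gamma$ in an overlapper, use hypothesis (i) together with disjointness from $\alpha$ and the triangle inequality to push the projection distance above the Behrstock threshold, flip to the partner domain, and collide with hypothesis (ii). Your framing as a contraposition (assuming $\gamma$ misses both $\alpha$ and $\beta$) is logically equivalent to the paper's ``if $\gamma$ misses $\alpha$ then it hits $\beta$'', and you are in fact slightly more careful than the paper in explicitly verifying that $\gamma$ projects to $A_i$ before invoking Lemma~\ref{behrstock}.
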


\begin{proof}Given an arbitrary curve $\gamma \in \C{S'}$, we show it intersects either $\alpha$ or $\beta$.  Because $\gamma$ cannot be fixed by both $a$ and $b$, it projects to some domain, and we can choose this domain to be an overlapper (Lemma~\ref{oerlap}).  Without loss of generality we may assume $\gamma$ projects to a domain $Y$ of $b$, which overlaps with a domain $X$ of $a$.  Because multicurves have diameter-two projections (Lemma~\ref{projbound}), properties (i) and (ii) imply:
$$\dis{Y}{\alpha,\del X} \geq 12$$
$$\dis{X}{\beta,\del Y} \geq 12$$

Assuming $\gamma$ does not intersect $\alpha$, we show it must intersect $\beta$.  Again Lemma~\ref{projbound} implies $\dis{Y}{\alpha,\gamma} \leq 2$.  The following inequalities employ the triangle inequality, Behrstock's lemma, and another triangle inequality.
$$\dis{Y}{\gamma,\del X} \geq 10$$
$$\dis{X}{\gamma,\del Y} \leq 4$$
$$\dis{X}{\gamma,\beta} \geq 8$$
In yet another instance of Lemma~\ref{projbound}, the last inequality shows $\gamma$ and $\beta$ must intersect.\end{proof}
\medskip
\noindent \emph{Proof of Proposition~\ref{conj}.}  Let $u = (b^ka^k)^na^k$ and $v = (a^kb^k)^nb^k$.  Taken together, Lemmas~\ref{tangle} and \ref{fillcondition} show that, for any $\alpha \in \sigma(uau^{-1})$ and $\beta \in \sigma(vbv^{-1})$, $\alpha$ and $\beta$ fill $S'$.  Therefore $a_1 = uau^{-1}$ and $b_1 = vbv^{-1}$ are sufficiently different pure reducible mapping classes.\QED

\subsection{Constructions}\label{blueprint}
Here we give the pseudo-Anosov construction at the heart of the proof of our Main Theorem.  That is, we prove Proposition~\ref{zigzag} of Section \ref{prf}, which states that, in a group generated by two sufficiently different pure reducible mapping classes, every element is pseudo-Anosov except those conjugate to powers of the generators.  The second part of this section upgrades Proposition~\ref{zigzag} to Proposition~\ref{coreconstruction} given in the introduction, by proving the all-pseudo-Anosov subgroups are convex cocompact.

\subsubsection{Of pseudo-Anosovs}

Recall that a pseudo-Anosov is characterized by having infinite-diameter orbits in the curve complex.  Thus we will know $w$ is a pseudo-Anosov if distances $\Dis{\gamma,w^n(\gamma)}$ grow as $n$ increases, where $\gamma$ is some vertex in $\C{S}$.  Proposition~\ref{zigzag} is an application of Lemma~\ref{geo} below, which uses the Bounded Geodesic Image Theorem (Theorem~\ref{bgi}) to glean geometric information about sequences of geodesics.  The author learned this ``bootstrapping'' strategy from Chris Leininger, who takes a similar tack in \cite{Le}.  In the same vein one also has Proposition 5.2 in \cite{KL1}, but here the domains are not restricted.

Let $M = M(S)$ be the constant from Theorem~\ref{bgi}.  Below, properties (ii) and (iii) ensure that the projection in (iv) is well-defined.

\begin{lmm}\label{geo}Suppose $\{Y_j\}$ is a sequence of domains in $S$, and $\{X_j\}$ a sequence of subsets of $\V{S}$, for which the following properties hold for all $j$:
\begin{itemize}
\item[(i)] $\diam{X_j} \leq 2$
\item[(ii)] $X_j$ and $X_{j+1}$ are pairwise disjoint
\item[(iii)] The set of curves that project trivially to $Y_j$ is a subset of $X_j$
\item[(iv)] $\dis{Y_j}{w_{j-1},w_{j+1}} > 2M$ for any choice of $w_{j-1} \in X_{j-1}$, $w_{j+1} \in X_{j+1}$
\end{itemize}
Then for any $w_i \in X_i$ and $w_{i+k} \in X_{i+k}$, the geodesic $[w_i,w_{i+k}]$ contains a vertex from $X_j$ for $i \leq j \leq i+k$.   Also, the $X_j$ are pairwise disjoint.  In particular, $[w_i,w_{i+k}]$ has length at least $k$.\end{lmm}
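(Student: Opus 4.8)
The plan is to prove the one substantive assertion---that $[w_i,w_{i+k}]$ meets every $X_j$ with $i\le j\le i+k$---and then read off pairwise disjointness and the length bound as formal consequences. The engine is the contrapositive of the Bounded Geodesic Image Theorem (\ref{bgi}): if every vertex of a geodesic projects to a domain $Y$, its $Y$-projection has diameter at most $M$. So to place a vertex of $X_j$ on $[w_i,w_{i+k}]$ for an interior index $j$, I would first dispose of the trivial case in which an endpoint already lies in $X_j$, and otherwise note that by property (iii) both endpoints project to $Y_j$; it then suffices to show $\dis{Y_j}{w_i,w_{i+k}}>M$, for then some vertex of the geodesic fails to project to $Y_j$ and hence lies in $X_j$.

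The lower bound on $\dis{Y_j}{w_i,w_{i+k}}$ I would obtain by induction on $k$, peeling off the leftmost interior marker. For $X_{i+1}$: since $w_i\in X_i$ is disjoint from $X_{i+1}$ by (ii), it projects to $Y_{i+1}$, and property (iv) gives $\dis{Y_{i+1}}{w_i,w_{i+2}}>2M$ for any $w_{i+2}\in X_{i+2}$. If I can bound the error $\dis{Y_{i+1}}{w_{i+2},w_{i+k}}\le M$, the triangle inequality yields $\dis{Y_{i+1}}{w_i,w_{i+k}}>2M-M=M$, placing a vertex $v\in X_{i+1}$ on the geodesic. I would then split: the subsegment $[v,w_{i+k}]$ is itself a geodesic joining $X_{i+1}$ to $X_{i+k}$, a strictly shorter chain, so by induction it meets every $X_j$ with $i+1\le j\le i+k$; together with $w_i\in X_i$ this exhausts all markers. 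The error bound itself would come from \ref{bgi} applied to $[w_{i+2},w_{i+k}]$: provided this subgeodesic contains no curve failing to project to $Y_{i+1}$---equivalently, by (iii), provided it avoids $X_{i+1}$---its $Y_{i+1}$-projection has diameter at most $M$.

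Granting passage, pairwise disjointness is immediate: if $\gamma\in X_a\cap X_b$ with $a<b$, I would apply passage to the degenerate geodesic from $\gamma\in X_a$ to $\gamma\in X_b$, which is the single vertex $\gamma$; it must meet $X_{a+1}$, forcing $\gamma\in X_a\cap X_{a+1}$ and contradicting (ii). Passage then supplies one vertex of each $X_j$ on $[w_i,w_{i+k}]$, pairwise disjointness makes these $k+1$ vertices distinct, and a geodesic carrying $k+1$ distinct vertices has length at least $k$. The main obstacle is precisely the avoidance hypothesis in the error bound: I must rule out the tail geodesic $[w_{i+2},w_{i+k}]$ re-entering $X_{i+1}$, that is, establish a no-backtracking/monotonicity property of the chain in the projection to $Y_{i+1}$. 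This is where the factor of two in (iv) is spent---it leaves just enough slack to absorb one application of \ref{bgi}---and I expect to weave this avoidance into the same induction, using (iv) at the neighbouring domains together with the disjointness already established for shorter chains, rather than proving it as a separate statement.
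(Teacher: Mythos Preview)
Your overall architecture matches the paper's: induction on $k$, peel off one interior marker using the triangle inequality against property (iv), apply Theorem~\ref{bgi} to an auxiliary geodesic between the two neighbours of that marker, then recurse on the shorter chain. The disjointness and length arguments are identical to the paper's. The only difference of orientation---you peel the leftmost marker $X_{i+1}$ while the paper peels the rightmost $X_{i+k-1}$---is immaterial.

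The gap is exactly where you flag it: you need the auxiliary geodesic $[w_{i+2},w_{i+k}]$ to avoid $X_{i+1}$, and your closing sentence (``using (iv) at the neighbouring domains together with the disjointness already established for shorter chains'') does not point to a working mechanism. Property (iv) alone will not force avoidance, and disjointness of the $X_j$ for shorter chains only tells you the \emph{endpoints} $w_{i+2},w_{i+k}$ lie outside $X_{i+1}$---it says nothing about interior vertices of the geodesic. The paper's resolution uses property (i), which you never invoke: if your chosen $[w_{i+2},w_{i+k}]$ does meet $X_{i+1}$, take the last such vertex $w'_{i+1}$, apply the induction hypothesis to the subsegment $[w'_{i+1},w_{i+k}]$ (a chain of length $k-1$) to locate some $w'_{i+2}\in X_{i+2}$ on it, and then observe that the detour $[w_{i+2},w'_{i+1}]\cup[w'_{i+1},w'_{i+2}]$ has length at least $2$ by (ii) while $\Dis{w_{i+2},w'_{i+2}}\le 2$ by (i); so you may replace this detour by a length-$2$ path inside $X_{i+2}$, obtaining a new geodesic $[w_{i+2},w_{i+k}]$ that avoids $X_{i+1}$ entirely. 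That rerouting is the missing idea.
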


\begin{proof}First let us induct on $k$ the claim that, for any $w_i \in X_i$ and $w_{i+k} \in X_{i+k}$, the geodesic $[w_i,w_{i+k}]$ contains a vertex from $X_j$ for $i \leq j \leq i+k$.  This is vacuously true for $k = 1$.

The general induction step works even for $k=2$, but let us separate this case to highlight its use of the Bounded Geodesic Image Theorem, Theorem~\ref{bgi}.  By property (iv), $\dis{Y_{i+1}}{w_{i},w_{i+2}} > 2M > M$, so Theorem~\ref{bgi} requires $[w_i,w_{i+2}]$ contain a vertex $v$ disjoint from $Y_{i+1}$.  By property (iii), $v$ is contained in $X_{i+1}$.  Of course, the endpoints of $[w_i,w_{i+2}]$ lie in $X_i$ and $X_{i+2}$, so the induction claim holds.

Now assume $k > 2$ and for any $w_i \in X_i, w_{i+k-1} \in X_{i+k-1}$, the geodesic $[w_i,w_{i+k-1}]$ intersects $X_j$ for $i \leq j \leq i+k-1$.  The main task is to show $[w_i,w_{i+k}]$ intersects $X_{i+k-1}$.

Choosing any $w_{i+k-2} \in X_{i+k-2}$, our first step is to show one can pick a geodesic $[w_i,w_{i+k-2}]$ avoiding $X_{i+k-1}$.  Suppose we are given a geodesic that doesn't, that is, for some $w'_{i+k-1} \in X_{i+k-1}$,
\[ [w_i,w_{i+k-2}] = [w_i,w'_{i+k-1}] \cup [w'_{i+k-1},w_{i+k-2}]  .\]
Let us require $w'_{i+k-1}$ to be the first vertex along $[w_i,w_{i+k-2}]$ belonging to $X_{i+k-1}$, so that only the last vertex of $[w_i,w'_{i+k-1}]$ lies in $X_{i+k-1}$.  The induction hypothesis applied to the segment $[w_i,w'_{i+k-1}]$ implies that, for some $w'_{i+k-2} \in X_{i+k-2}$,
\[ [w_i,w_{i+k-2}] = [w_i,w'_{i+k-2}] \cup [w'_{i+k-2},w'_{i+k-1}] \cup [w'_{i+k-1},w_{i+k-2}]  .\]
Property (ii) ensures the second and third segments above each have length at least one.  Thus their union $[w'_{i+k-2},w_{i+k-2}]$ has length at least two.  Property (i) tells us we can replace it with a length-2 geodesic contained entirely in $X_{i+k-2}$, giving a new $[w_i,w_{i+k-2}]$ avoiding $X_{i+k-1}$.

Property (iii) ensures that every vertex of $[w_i,w_{i+k-2}]$ projects nontrivially to $Y_{i+k-1}$, as does $w_{i+k}$.  Projection distances make sense and Theorem~\ref{bgi}, the Bounded Geodesic Image Theorem, applies:
\[\dis{Y_{i+k-1}}{w_i,w_{i+k-2}} \leq \pdiam{Y_{i+k-1}}{[w_i,w_{i+k-2}]} \leq M  .\]
Finally, a triangle inequality:
\begin{eqnarray*}
\pdiam{Y_{i+k-1}}{[w_i,w_{i+k}]} & \geq & \dis{Y_{i+k-1}}{w_i,w_{i+k}}\\
& \geq & \dis{Y_{i+k-1}}{w_{i+k-2},w_{i+k}} - \dis{Y_{i+k-1}}{w_i,w_{i+k-2}} \\
& > & 2M - M = M
\end{eqnarray*}

Again by Theorem~\ref{bgi}, we know $[w_i,w_{i+k}]$ intersects $X_{i+k-1}$.  The end of the induction is easy: for some $w_{i+k-1}$ in $X_{i+k-1}$,
\[ [w_i,w_{i+k}] = [w_i,w_{i+k-1}] \cup [w_{i+k-1},w_{i+k}]  .\]
The induction hypothesis says the first segment on the right intersects each $X_j$, $i \leq j \leq i+k-1$.  So the geodesic on the left intersects $X_j$, $i \leq j \leq i+k$, as required.

Finally we check that the sets $X_j$ are pairwise disjoint.  Suppose $z \in X_i \cap X_{i+k}$ for some nonzero $k$.  By the part of the lemma already proved, the geodesic $[z,z]$ contains vertices in $X_j$ for $i \leq j \leq i+k$: simply put, all those $X_j$ intersect at $z$.  But consecutive $X_j$ do not intersect.\end{proof}

\noindent \textbf{Proof of Proposition~\ref{zigzag}.}  Recall that $Q = Q(S) = \max\{3,(2M + 4)/c\}$ where $M=M(S)$, the constant from Theorem~\ref{bgi}, and $c = c(S)$, the constant from Corollary \ref{strans}.  The hypothesis states that $a$ and $b$ are pure reducible mapping classes, with $A \in \A{a}$ and $B \in \A{b}$, and $A$ and $B$ together fill $S$.  We will show that for any $m,n > Q$, $\grp{a^m, b^n}$ is free and nontrivial elements of $\grp{a^m, b^n}$ are either pseudo-Anosov or conjugate to powers of $a^m$ or $b^n$.

If $S$ is a torus or four-punctured sphere, the only pure reducible mapping classes are Dehn twists about curves, and any pair of curves fill the surface.  In this case the result for $Q = 3$ is due to \cite{HT}; see also \cite{Th, Pe, Ish}.  For the rest of the proof let us assume $\xi(S) > 1$.

Let $n,m \geq Q$ be arbitrary integers.  Let $\alpha$ be a component of $\del A$ and $\beta$ of $\del B$.  Let $C_A \subset \V{S}$ be the vertices with empty projection to $A$, and define $C_B$ analogously.  Observe that $C_A$ and $C_B$ sit in $1$-neighborhoods of $\alpha$ and $\beta$, respectively, so they each have diameter 2 in $\C{S}$.  Because any curve intersects either $A$ or $B$, $C_A$ and $C_B$ contain no common vertices---this is the only place we use the fact that $A$ and $B$ fill $S$.

Here's the whole point of our choice of $Q$: for any nonzero integer $k$,
$$\dis{A}{C_B,a^{mk}(C_B)} \geq \dis{A}{\beta, a^{mk}(\beta)} \geq |mk|c > Qc \geq 2M + 4$$
$$\dis{B}{C_A,b^{nk}(C_A)} \geq \dis{B}{\alpha, b^{nk}(\alpha)} \geq |nk|c > Qc \geq 2M + 4$$
Notice that the domains $A, B$ and sets $C_A, C_B$ play the same roles for $a^m$ and $b^n$ as they do for $a$ and $b$.  To ease the burden of excessive exponents, we replace $a^m$ with $a$ and $b^n$ with $b$ for the remainder of the proof.  In this new notation, our goal is to show that $\grp{a,b}$ is free, and every nontrivial element of $\grp{a,b}$ is pseudo-Anosov except those conjugate to powers of $a$ or $b$.  We have already established that, for any nonzero $k$,
$$\dis{A}{C_B,a^k(C_B)} > 2M + 4 \qquad \text{and} \qquad \dis{B}{C_A,b^k(C_A)} > 2M + 4$$
In particular, for any $\gamma,\gamma' \in C_B$ or $\delta,\delta' \in C_A$,
\begin{equation}\label{eq:bgi}\dis{A}{\gamma,a^k(\gamma')} > 2M \qquad \text{and} \qquad \dis{B}{\delta,b^k(\delta')} > 2M\end{equation}
We use this shortly to apply Lemma~\ref{geo}.

In the abstract free group on $a$ and $b$, a word $w$ has reduced form $w = s_{1}s_{2}\dots s_{R}$, where the \emph{syllables} $s_i$ are nontrivial powers of either $a$ or $b$, and $s_i$ is a power of $a$ if and only if $s_{i\pm1}$ is a power of $b$ (i.e.\ powers of $a$ and $b$ alternate).  Define \emph{powerblind} word length $|\cdot|_*$ by $|w|_* = R$, the number of syllables of $w$.

\textbf{Claim.} For any word $w$, either $\Dis{w(\alpha),\alpha} \geq |w|_*$ or $\Dis{w(\beta),\beta} \geq |w|_*$.

It immediately follows that $\grp{a,b}$ is a rank-two free group.  If $|w|_*$ is even, it is easy to check $|w^n|_* = n|w|_*$.  The claim implies $\Dis{w^n(\gamma),\gamma} \geq n|w|_*$, where $\gamma$ is $\alpha$ or $\beta$, depending on $w$. In particular the orbit of $w$ has infinite diameter in the curve complex, so $w$ is a pseudo-Anosov.  If $|w|_*$ is odd and neither conjugate to a power of $a$ nor of $b$, then it is conjugate to $v$ such that $|v|_*$ is even.  As $v$ is pseudo-Anosov, so is its conjugate $w$.

It remains to prove the claim.  Towards this, we describe a sequence of domains and $\V{S}$-subsets fulfilling the hypotheses of Lemma~\ref{geo}.  Let $w_0$ be the identity, $w_1 = s_1$, $w_2 = s_1s_2$, and so forth, so that $w_i$ is the word formed by the first $i$ syllables of $w$.  If $s_1$ is a power of $a$, let $I(a)$ correspond to the even integers, and $I(b)$ to the odds; if $s_1$ is a power of $b$, switch the roles of even and odd.  Define $R$-length sequences of vertices $\gamma_r$, domains $Y_r$, and sets $X_r$ as follows.
\begin{align*}
\gamma_r &= w_r(\alpha) \quad &Y_r &= w_r(A) \quad &X_r &= w_r(C_A) & \forall r \in I(a)\cap[0,R] &\\
\gamma_r &= w_r(\beta) \quad &Y_r &= w_r(B) \quad &X_r &= w_r(C_B) &\forall r \in I(b)\cap[0,R] &
\end{align*}
In addition, set $\{\gamma_{-1}, Y_{-1}, X_{-1}\}$ equal to $\{\beta,B,C_B\}$, if $s_1$ is a power of $a$, or $\{\alpha,A,C_A\}$, if $s_1$ is a power of $b$.

Each $X_j$ is isometric to $C_A$ or $C_B$, and furthermore pairs $X_j,X_{j+1}$ are isometric to the pair $C_A,C_B$, disregarding order.  Therefore the sequence $\{Y_j,X_j\}$ meets conditions (i) -- (iii) of Lemma~\ref{geo}.  Condition (iv) requires $\dis{Y_j}{v_{j-1},v_{j+1}} > 2M$ for any choice of $v_{j-1} \in X_{j-1}$, $v_{j+1} \in X_{j+1}$.  For $j=0$, this condition simply restates one of the inequalities in \eqref{eq:bgi} above, so let us suppose $j \geq 1$.  Without loss of generality, assume $Y_j = w_j(A)$ and $v_{j-1} = w_{j-1}(\gamma_{-}),v_{j+1} = w_{j+1}(\gamma_{+})$ for some $\gamma_{-}, \gamma_{+} \in C_B$.  Because subsurface projection commutes naturally with the action of the mapping class group,
$$\dis{w_j(A)}{v_{j-1},v_{j+1}} = \dis{A}{w_j^{-1}(v_{j-1}),w_j^{-1}(v_{j+1})} = \dis{A}{s_j^{-1}(\gamma_{-}),s_{j+1}(\gamma_{+})}$$
Exactly one of $s_j$ and $s_{j+1}$ is a power of $a$, while the other is a power of $b$.  In any case one knows that, for some $\gamma,\gamma' \in C_B$ and nonzero $k$,
$$\dis{A}{s_j^{-1}(\gamma_{-}),s_{j+1}(\gamma_{+})} = \dis{A}{a^k(\gamma),\gamma'}$$
We used the fact that $b$ fixes $C_B$ setwise.  Applying inequality \eqref{eq:bgi} above, we can conclude $\dis{w_j(A)}{v_{j-1},v_{j+1}} > 2M$.

\begin{figure}[h]
\begin{center}
\includegraphics{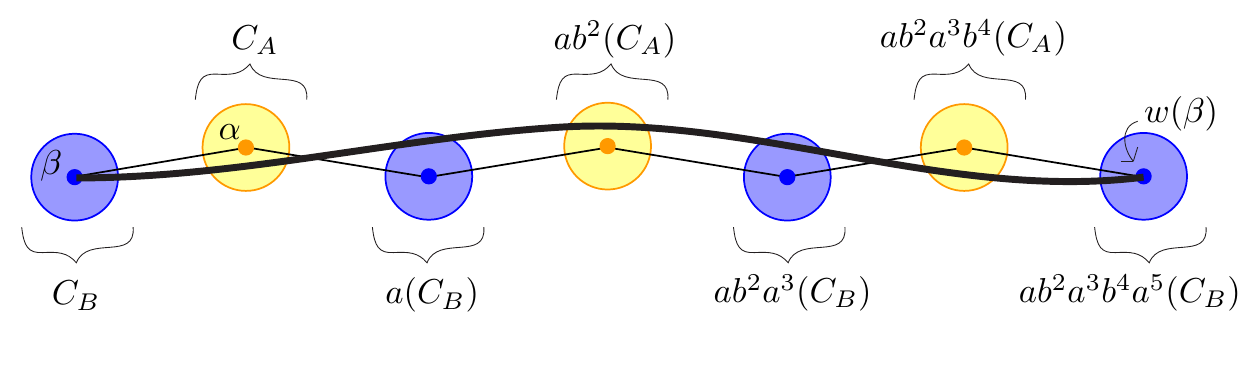}
\caption{\emph{The sequence defined in the proof of Proposition~\ref{zigzag}.}  Here $w=ab^2a^3b^4a^5$.  Vertices correspond to $\gamma_i$ and shaded circles represent $X_i$, which each sit in the 1-neighborhood of $\gamma_i$.  The heavy, smooth line is a geodesic connecting $\beta$ and $w(\beta)$; it intersects each of the $X_i$.}
\end{center}
\end{figure}

By Lemma~\ref{geo}, the geodesics $[\gamma_{-1},\gamma_R]$ and $[\gamma_0,\gamma_R]$ have lengths at least $R+1$ and $R$ respectively.  Depending on $w$, one of these geodesics is either $[\alpha,w(\alpha)]$ or $[\beta,w(\beta)]$.  This proves the claim, thus the lemma.\QED

\noindent \textbf{Remark.}  One can ask whether Proposition~\ref{zigzag} above can be proven for $n$-tuples of pure mapping classes subject to appropriate conditions.  For example, if pairwise they fulfill the requirements for $a$ and $b$ in the lemma, do sufficiently high powers generate rank $n$ free groups?  One would need to avoid the trivial counterexamples arising from redundant generating sets such as $\{a,b,a^kba^{-k}\}$.  Excluding this latter possibility, the immediate adaptation of the proof of Lemma~\ref{zigzag} for $n$-tuples only gives a $Q$ that depends on the particular $n$-tuple; proving that some $Q$ works for any $n$-tuple seems to require more maneuvering.

\subsubsection{Of convex cocompact all-pseudo-Anosov subgroups}\label{schottky}
Now we upgrade Proposition~\ref{zigzag} to Proposition~\ref{coreconstruction} of the introduction.  The action of a group $G < \Mod{S}$ on the curve complex gives a \emph{quasi-isometric embedding} $G \into \C{S}$ if, for some $\gamma \in \V{S}$, $K > 1$, and $C \geq 0$, and for all $w \in G$,\begin{equation}\label{eq:qi}K |w| + C \geq \Dis{\gamma,w(\gamma)} \geq |w|/K - C\end{equation} where $|w|$ gives word length with respect to some metric on $G$.  From \eqref{eq:qi} one sees that such a group contains no non-trivial reducible elements.  Let us call a group \emph{convex cocompact} if it fulfills \eqref{eq:qi} for some $\gamma$, $K$, and $C$ as above.  Hamenst\"{a}dt and Kent-Leininger proved that this is equivalent to the definition of convex cocompact mapping class subgroups introduced by Farb and Mosher in \cite{FM}.  Convex cocompact subgroups which are free of finite rank are called Schottky.  Farb and Mosher proved these are common in mapping class groups of closed surfaces, and Kent and Leininger include a new proof which also works for non-closed surfaces:

\begin{thm}[Abundance of Schottky groups \cite{FM}, \cite{KL2}] Given a finite set of pseudo-Anosovs $\{g_1, g_2, ... g_k\}$ which are independent (i.e., no pairs of powers commute), there exists $l$ such that for all $m > l$,  $\{g^m_1, g^m_2, ... g^m_k\}$ is Schottky.\end{thm}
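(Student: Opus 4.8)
The plan is to run a ping-pong argument in the curve complex, exploiting the loxodromic dynamics of pseudo-Anosovs on $\C{S}$. By Theorem~\ref{trans} each $g_i$ moves every vertex of $\C{S}$ a distance at least $c|n|$ under its $n$-th power, so $g_i$ acts as a loxodromic isometry of the $\delta$-hyperbolic space $\C{S}$. Every loxodromic isometry of a Gromov-hyperbolic space fixes a pair of points $\lambda_i^{+}, \lambda_i^{-}$ on the Gromov boundary $\del\C{S}$ and acts there with north--south dynamics: large powers $g_i^{\pm m}$ contract the complement of a neighborhood of the repelling point toward the attracting point. First I would record that independence forces the $2k$ boundary points $\{\lambda_i^{\pm}\}$ to be pairwise distinct; indeed, if two of these pseudo-Anosovs shared a boundary fixed point they would share the corresponding filling lamination, whose stabilizer is virtually cyclic, forcing a pair of commuting powers and contradicting independence.

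Next I would choose pairwise disjoint neighborhoods $U_i^{\pm}$ of the $\lambda_i^{\pm}$ in $\C{S} \cup \del\C{S}$ and fix a basepoint $\gamma \in \V{S}$ lying outside all of them. Using north--south dynamics, I would select $l$ large enough that for all $m > l$ each $g_i^{m}$ maps the complement of $U_i^{-}$ into $U_i^{+}$ and $g_i^{-m}$ maps the complement of $U_i^{+}$ into $U_i^{-}$. The Ping-Pong Lemma then immediately yields that $\{g_1^{m}, \dots, g_k^{m}\}$ freely generates a free group of rank $k$.

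The substantive part is upgrading freeness to the quasi-isometric embedding \eqref{eq:qi}, which is what makes the group Schottky rather than merely free. Here I would track the orbit of $\gamma$ under the successive prefixes of a reduced word $w = g_{i_1}^{e_1}\cdots g_{i_r}^{e_r}$ (with each exponent of magnitude at least $m$), obtaining a broken path $\gamma, w_1\gamma, \dots, w\gamma$. Each syllable contributes a segment fellow-traveling the corresponding quasi-axis and, by Theorem~\ref{trans}, advancing by at least $cm$; the disjointness of the ping-pong neighborhoods bounds the Gromov product at the junctions, so consecutive segments turn by a uniformly bounded amount. A standard local-to-global argument for quasi-geodesics in $\delta$-hyperbolic spaces then shows this concatenation is a quasi-geodesic, giving $\Dis{\gamma, w\gamma} \ge |w|/K - C$ for constants depending only on $S$, $c$, $\delta$, and the separation of the $U_i^{\pm}$. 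The reverse inequality in \eqref{eq:qi} is automatic since each generator moves $\gamma$ a bounded distance. The main obstacle is exactly this quantitative step: one must choose the single threshold $l$ large enough, relative to $\delta$ and to the contraction rate of the boundary dynamics, that the per-syllable progress $cm$ dominates the backtracking permitted by $\delta$-thinness, so the broken path never forfeits definite progress at its corners. Once the orbit map is established as a quasi-isometric embedding, the group satisfies \eqref{eq:qi} and, being free of finite rank, is Schottky by definition.
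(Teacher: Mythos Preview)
The paper does not prove this theorem; it is quoted as background from \cite{FM} and \cite{KL2}, so there is no in-paper argument to compare your proposal against. Your outline follows the curve-complex route associated with \cite{KL2} rather than the original Teichm\"uller-geometric proof of \cite{FM}: loxodromic dynamics on $\C{S}$, distinct boundary fixed points forced by independence (via virtual cyclicity of a lamination stabilizer), ping-pong for freeness, and a local-to-global quasi-geodesic argument for the lower bound in \eqref{eq:qi}. That is a sound strategy and is in the spirit of the cited literature; the one place you would need to be careful in a full write-up is making the ``bounded Gromov product at junctions'' quantitative enough to feed into the local-to-global lemma, since the ping-pong neighborhoods live in $\C{S}\cup\del\C{S}$ and you must extract a uniform interior bound. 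But as far as this paper is concerned, there is simply no proof here for you to match.
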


Fujiwara found a uniform bound for the above theorem in the case of two-generator subgroups:

\begin{thm}[Fujiwara \cite{Fu2}, full version]  There exists a constant $L = L(S)$ with the following property.  Suppose $a,b \in \Mod{S}$ are independent pseudo-Anosov elements.  Then for any $n,m \geq L$, $\grp{a^n,b^m}$ is Schottky.\end{thm}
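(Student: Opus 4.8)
The plan is to run a ping-pong argument directly on the $\delta$-hyperbolic complex $\C{S}$, arranging that every constant that enters depends on $S$ alone. Because $a$ and $b$ are pseudo-Anosov they act loxodromically on $\C{S}$, and by Theorem~\ref{trans} each has translation length at least $c=c(S)$; fix bi-infinite quasi-axes $\ell_a$ and $\ell_b$ along which they translate. The first ingredient I would record is that these quasi-axes are uniformly contracting: there is a constant $B=B(S)$, the same for every pseudo-Anosov, such that a geodesic of $\C{S}$ that strays from $\ell_g$ shadows at most a length-$B$ piece of it. This follows from the Bounded Geodesic Image Theorem (Theorem~\ref{bgi}): a geodesic shadowing a long stretch of $\ell_g$ from a distance would project with large diameter to a proper subsurface that $\ell_g$ crosses, violating the uniform bound $M=M(S)$.

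The heart of the proof is then a bootstrapping estimate in the spirit of Lemma~\ref{geo} and the proof of Proposition~\ref{zigzag}, with the translated axes $\ell_a,\ell_b$ (and the proper subsurfaces they cross) playing the role that the proper domains $A,B$ played there. Given a reduced word $w=s_1\cdots s_R$ in $a^n$ and $b^m$, set $w_0=\mathrm{id}$ and $w_r=s_1\cdots s_r$, and track the images $w_r(\gamma)$ of a fixed basepoint $\gamma\in\V{S}$. Each syllable moves the running image a definite distance along a translate of an axis: by Theorem~\ref{trans} and the equivariance of the metric, $\Dis{w_{r-1}(\gamma),w_r(\gamma)}\ge nc$ or $\ge mc$ according to the type of $s_r$, hence at least $Lc$. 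Meanwhile the contraction constant $B$, fed through Theorem~\ref{bgi} exactly as in Lemma~\ref{geo}, bounds the amount by which consecutive syllables can backtrack or cancel, by a quantity depending only on $B$, $M$, and $\delta$. Choosing $L=L(S)$ large enough that $Lc$ dominates this backtracking forces a geodesic from $\gamma$ to $w(\gamma)$ to shadow each intermediate axis-segment in turn, so that $\Dis{\gamma,w(\gamma)}$ grows linearly in $R$ with a slope bounded below in terms of $S$.

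From this linear lower bound two conclusions follow at once: no nontrivial reduced word can fix $\gamma$, so $\grp{a^n,b^m}$ is free of rank two, and the orbit map $w\mapsto w(\gamma)$ satisfies the quasi-isometric embedding inequality~\eqref{eq:qi}, so the group is Schottky. The step I expect to be the main obstacle is precisely the uniformity of $L$. For a single fixed pair ping-pong is classical, but the coarse overlap of $\ell_a$ and $\ell_b$ can be arbitrarily long across the family of independent pairs, and a crude ping-pong threshold would grow with that overlap. The bootstrapping circumvents this because the only quantities it weighs against each other---the per-syllable divergence $Lc$ from Theorem~\ref{trans} and the backtracking cap from Theorem~\ref{bgi}---are both uniform in $S$; the configuration of the axes, including any long overlap, affects only how far out the first syllables must travel before the pattern stabilizes, and hence only the constants $K$ and $C$ of~\eqref{eq:qi}, which the definition of Schottky permits to depend on the pair. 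In this way the argument transports the uniform power $Q$ obtained for reducibles in Proposition~\ref{zigzag} to the pseudo-Anosov setting, with subsurface projections replaced throughout by coarse projection to the translated axes.
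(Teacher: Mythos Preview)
The paper does not prove this theorem; it merely quotes it from Fujiwara~\cite{Fu2} in Section~\ref{schottky}, so there is no proof in the paper to compare your proposal against.

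That said, a remark on your sketch: the overall strategy---ping-pong on $\C{S}$ with all thresholds depending only on $S$---is the right one, and your instinct that the uniformity of $L$ is the crux is correct. But the way you invoke Theorem~\ref{bgi} to get uniform contraction of pseudo-Anosov axes is not quite right. The Bounded Geodesic Image Theorem controls projections to \emph{proper} subsurfaces, and for a pseudo-Anosov there is no distinguished proper domain playing the role that $A$ and $B$ played in Proposition~\ref{zigzag}; your sentence ``a geodesic shadowing a long stretch of $\ell_g$ from a distance would project with large diameter to a proper subsurface that $\ell_g$ crosses'' does not follow from Theorem~\ref{bgi} as stated. Fujiwara's argument instead rests on the \emph{acylindricity} of the $\Mod{S}$-action on $\C{S}$ (due to Bowditch): for every $r$ there is a uniform bound $N(r,S)$ on the number of group elements that move a pair of sufficiently distant points each by at most $r$. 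Acylindricity is what yields, uniformly in the pair $(a,b)$, the bound on how long two independent pseudo-Anosov axes can fellow-travel, and hence the uniform ping-pong threshold $L$. Your bootstrapping picture can be made to work, but the backtracking cap must come from acylindricity (or an equivalent uniform thinness statement for pseudo-Anosov quasi-axes), not directly from Theorem~\ref{bgi}.
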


Proposition~\ref{coreconstruction} provides a source of Schottky subgroups with arbitrary rank.  It simply adds to Proposition~\ref{zigzag} the claim that finitely generated all-pseudo-Anosov subgroups of $\grp{a,b}$ are Schottky.  Recall that $a$ and $b$ are pure mapping classes with essential reduction curves $\alpha$ and $\beta$ respectively, such that $\alpha \cup \beta$ fills $S$.  Proposition~\ref{zigzag} tells us that $G =\grp{a,b}$ is a free group, and its all-pseudo-Anosov subgroups are exactly those containing no conjugates of powers of $a$ or $b$.  For example, any subgroup of the commutator group $[G,G]$ qualifies.

\medskip \noindent \emph{Proof of Proposition~\ref{coreconstruction}.}  Let $G = \grp{a,b}$ as in the proposition, and let $H$ be a finitely generated, all-pseudo-Anosov subgroup of $G$.  To show that $H$ is Schottky, we need quasi-isometry constants for the inequalities in \eqref{eq:qi}.  For an orbit embedding of a finitely generated group, the upper bound of \eqref{eq:qi} always comes for free, using any $K$ greater than the largest distance some generator translates $\gamma$.  So our work is the lower bound.  In the proof of Lemma~\ref{zigzag} we saw that $\Dis{\gamma,w(\gamma)} \geq |w|_*$ where $\gamma$ is one of $\alpha$ or $\beta$, depending on $w$.  It is not hard to check that, in general, $\Dis{\alpha,w(\alpha)} \geq |w|_* - 1$ (the same is true replacing $\alpha$ with $\beta$).  However, powerblind word length is not a word metric with respect to any finite generating set for $H$.  In what follows, we define a convenient generating set for $H$ such that, letting $|\cdot|_H$ denote the corresponding word metric, we have $K|w|_* \geq |w|_H$, where $K$ is the size of this generating set.  Then we can conclude that $\Dis{\alpha,w(\alpha)} \geq |w|_H/K - 1$, completing the proof.

The existence of this convenient generating set has no relation to our setting of subgroups of mapping class groups, so we isolate this fact as a separate technical lemma.  We only need that $G$ is a rank two free group generated by $a$ and $b$.  Suppose $H$ is a finitely generated subgroup.  For $w \in H$, let $|w|$ denote its length in $G$ with respect to the generating set $\{a,b\}$.  Let $H_l = \{h \in H \colon |h| \leq l\}$ and choose $L$ so that $H_L$ generates $H$.  Define $|\cdot|_H$ as word length in $H$ with respect to $H_L$.  Let $K$ be the number of elements in $H_L$.

\begin{lmm}\label{technical}If $H$ contains no element conjugate to a power of $a$ or $b$, then $K|w|_* \geq |w|_H$.
\end{lmm}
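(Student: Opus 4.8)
The plan is to show that the hypothesis forces the syllables of elements of $H$ to have uniformly bounded length, and then to convert this bound on the $G$-geometry of $H$ into the desired bound on the intrinsic $H$-word metric. The first and decisive step is to read off the combinatorial meaning of the hypothesis. Since $G$ is free of rank two and $H$ is finitely generated, Stallings' theory represents $H$ as $\pi_1(\Gamma, v_0)$ for a finite folded graph $\Gamma$ whose edges are labeled by $a$ and $b$: reduced words of $H$ correspond exactly to immersed loops at $v_0$, and cyclically reduced conjugacy classes to immersed closed paths. In this language, $H$ contains an element conjugate to a power of $a$ (resp.\ $b$) precisely when $\Gamma$ contains a closed path consisting solely of $a$-edges (resp.\ $b$-edges), so the hypothesis says exactly that $\Gamma$ has \emph{no monochromatic cycle}. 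Because folding guarantees that each vertex has at most one incoming and one outgoing $a$-edge (and likewise for $b$), the $a$-edges form a disjoint union of simple paths and cycles; ruling out the cycles leaves a linear forest, and similarly for the $b$-edges. Consequently any monochromatic subpath of an immersed loop has length at most $D := |V(\Gamma)|$, which is to say every syllable $s_i$ of every $w \in H$ satisfies $|s_i| \le D$, and hence $|w| \le D\,|w|_*$.

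Next I would turn this into an economical factorization of $w$ into loops of bounded $G$-length. Fix a spanning tree $T_0$ of $\Gamma$, and for each vertex $v$ let $t_v$ be the $T_0$-geodesic from $v_0$ to $v$, so $|t_v| \le \mathrm{diam}(\Gamma) =: \Delta$. Reading $w = s_1 s_2 \cdots s_R$ (with $R = |w|_*$) as an immersed loop passing through vertices $v_0 = u_0, u_1, \dots, u_R = v_0$ at the junctions between consecutive syllables, I insert tree-geodesic detours and telescope:
\[ w = \prod_{i=1}^{R} \big( t_{u_{i-1}}^{-1}\, s_i\, t_{u_i} \big). \]
Each factor is a loop based at $v_0$, hence lies in $H$, and has $G$-length at most $2\Delta + D$. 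Since there are only finitely many elements of $H$ of $G$-length at most $2\Delta + D$, each factor is a product of a bounded number of elements of the generating set $H_L$; absorbing this bound (together with the translation constant needed for the upper half of \eqref{eq:qi}) into the constant $K = |H_L|$ yields $|w|_H \le K\,|w|_* = KR$, as claimed.

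The main obstacle is precisely the bounded-syllable step, which is where the hypothesis is indispensable: without excluding conjugates of powers of $a$ and $b$, the subgroup $H$ could contain, say, a power of $a$, an element with $|w|_* = 1$ but arbitrarily large $|w|_H$, destroying any inequality of the asserted shape. The agreeable feature is that, once $\Gamma$ has no monochromatic cycle, finiteness of $\Gamma$ upgrades this observation to the clean linear bound $|w| \le D\,|w|_*$; everything afterward is the routine, if slightly bookkeeping-heavy, passage from the $G$-metric to the $H$-metric through the finite generating set $H_L$. (Should one wish to avoid invoking Stallings graphs explicitly, the same bounded-syllable fact can be obtained by a direct pumping argument: a syllable $a^m$ of an element of $H$ with $m$ larger than the number of $H_L$-states one can occupy while reading it would force a repetition, and the repeated segment pumps out a conjugate of a nontrivial power of $a$ inside $H$.)
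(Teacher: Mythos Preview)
Your route is genuinely different from the paper's, and the structural insight is correct and attractive: reading $H$ as $\pi_1$ of its Stallings graph $\Gamma$, the hypothesis translates precisely into ``no monochromatic cycle,'' so the $a$-edges (resp.\ $b$-edges) form a linear forest and every syllable of every element of $H$ has $G$-length at most $D = |V(\Gamma)|$. Two small repairs are needed. First, your telescoped factors should be $t_{u_{i-1}}\, s_i\, t_{u_i}^{-1}$, not $t_{u_{i-1}}^{-1}\, s_i\, t_{u_i}$; only the former is the label of a loop at $v_0$ and hence lies in $H$ (and one should bound $|t_v|$ by the diameter of the spanning tree, not of $\Gamma$). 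Second, and more substantively, the ``absorbing'' step is not legitimate as written: $K$ is \emph{defined} as $|H_L|$ for a previously fixed $L$, so you cannot fold an auxiliary constant into it. What your argument actually produces is $|w|_H \le C\,|w|_*$ for some $C$ depending on $\Gamma$ and on the cost of expressing a short $G$-word in $H_L$, and $C$ need not equal $|H_L|$. The clean fix is to use the freedom in the choice of $L$: take $L \ge 2\Delta + D$ at the outset, so each telescoped factor already lies in $H_L$ and $|w|_H \le R = |w|_* \le K|w|_*$, which is stronger than required. This suffices for the application to convex cocompactness, where only \emph{some} linear bound is needed.

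The paper's argument, by contrast, never bounds syllable length and works for \emph{every} $L$ with $H_L$ generating. It starts from a minimal $H_L$-spelling $w = h_1\cdots h_n$, observes that the middle letter(s) of each $h_i$ survive into the reduced $\{a,b\}$-form of $w$, and then shows that if a run of consecutive $h_i$ all deposit their cores into the same syllable, any repetition $h_i = h_k$ in that run would exhibit an element of $H$ conjugate to a nontrivial power of $a$ or $b$. Hence each syllable absorbs the cores of at most $K = |H_L|$ of the $h_i$, yielding $n \le K|w|_*$ directly. This is closer in spirit to the ``pumping'' alternative you sketch parenthetically, except the pigeonhole is applied to the generators $h_i$ themselves rather than to vertices of a graph. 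Your Stallings-graph argument buys a cleaner geometric picture and the stronger conclusion $|w| \le D|w|_*$; the paper's buys the exact constant $K$ without any constraint on $L$.
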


\begin{proof}Suppose $w \in H$ has length $|w|_H = n$, and $w = h_1h_2 \cdots h_n$, where $h_i \in H_L$.  One knows that $|h_ih_{i+1}| > |h_i|,|h_{i+1}|$ because otherwise $h_ih_{i+1} \in H_L$, which contradicts that $|w|_H = n$ (one could take a shorter path to $w$ in the Cayley graph of $H$ with respect to $H_L$, by replacing $h_i$ and $h_{i+1}$ with their product, another generator).  In particular this means that strictly less than half of the word $h_i$, written as a product of $a$'s and $b$'s, gets canceled by a piece of the word $h_{i+1}$ in $a$'s and $b$'s.  Likewise, strictly less than half gets canceled by a piece of the word $h_{i-1}$.  Therefore, at least the middle letter of $h_i$, if $|h_i|$ is odd (the middle pair of letters if $|h_i|$ is even) gets contributed to the $\{a,b\}$-spelling of the word $w$. Incidentally, this is showing that $|w| \geq |w|_H$, implying that finitely generated subgroups of the rank-two free group are quasi-isometrically embedded.

Call the middle letter or pair of letters of each $h_i$ its \emph{core}.  Powerblind word length $|w|_*$ can be shorter than $|w|_H$ only if a string of consecutive $h_i$'s, say, $h_ih_{i+1}\cdots h_{k}$, all have $a$ or $a^2$ at their core, or if they all have $b$ or $b^2$ at their core, and these cores contribute to the same syllable (power of a or b) in the $\{a,b\}$-spelling of $w$.  In that case one can write, for $i \leq j \leq k$, $$h_j = u_j \cdot x^{e(j)} \cdot v_j$$ where $x$ is $a$ or $b$ and $x^{e(j)}$ includes the core of each $h_j$.  Furthermore, $v_j = u_{j+1}^{-1}$, so that $$h_ih_{i+1}\cdots h_{k} = u_ix^Nv_{k}$$ where $N = \sum_{i \leq j \leq k} e(j)$.  It is possible $u_i$ or $v_k$ are empty words, but $N$ cannot be zero, because otherwise $|h_ih_{i+1}\cdots h_{k}| \leq |h_i|/2 + |h_k|/2 \leq L$, meaning the entire string can be replaced with a single element of $H_L$, contradicting the fact that $|w|_H = n$.  In this context, suppose $h_i = h_k$.  Then $u_k = u_ix^p$ for some $p$.  But because $v_{k-1}=u_k^{-1}$, one has $$h_i\cdots h_{k-1} = u_ix^{N'}u_i^{-1}$$ for $N' = N - e(k) - p$.  However, the stipulation that $H$ contains no elements of $G$ conjugate to powers of $a$ or $b$, precludes this scenario.  Thus if any consecutive string $h_i\cdots h_k$ in the $H_L$-spelling of $w$ contributes to the same syllable in the $\{a,b\}$-spelling of $w$, that string includes at most one instance of each element of $H_L$.

We have demonstrated a correspondence between letters $h_i$ and syllables of $w$ written with respect to $H_L$ and $\{a,b\}$ respectively: each letter corresponds to at least one syllable (the one in which its core appears), and at most $K$ letters correspond to the same syllable.  Therefore $K|w|_* \geq |w|_H$.\end{proof}

As described above, Lemma \ref{technical} completes the proof of Proposition \ref{coreconstruction}.\QED

\bibliography{biblio}
\end{document}